\documentclass[12pt,authoryear,letterpaper,reqno,oneside]{amsart}
\pdfoutput=1
\usepackage[left=1.4in,right=1.4in,bottom=1.4in,top=1.40in]{geometry}
\usepackage{amsmath,amsfonts,amssymb}%
\usepackage{mathrsfs} 
\usepackage[usenames]{color}
\usepackage{setspace}
\usepackage{microtype}
\usepackage{comment}

\usepackage{subfiles}

\definecolor{darkred}{rgb}{0.5,0,0}
\definecolor{darkgreen}{rgb}{0, 0.3,0}
\definecolor{darkblue}{rgb}{0,0,0.6}
\definecolor{LightGray}{rgb}{.6,.6,.6}

\usepackage[colorlinks,citecolor=darkblue,urlcolor=darkblue,linkcolor=darkgreen]{hyperref}

\usepackage[capitalize]{cleveref}  
\crefname{lemma}{Lemma}{Lemmas}
\crefname{corollary}{Corollary}{Corollaries}
\crefname{theorem}{Theorem}{Theorems}
\crefname{equation}{Equation}{Equations}
\crefname{example}{Example}{Examples}
\crefname{section}{Section}{Sections}
\crefname{subsection}{Section}{Sections}

\theoremstyle{plain}%
\newtheorem{theorem}{Theorem}[section]%

\newtheorem{proposition}[theorem]{Proposition}%
\newtheorem{lemma}[theorem]{Lemma}%
\newtheorem{corollary}[theorem]{Corollary}%
\newtheorem{definition}[theorem]{Definition}%

\newcommand{\Lang}{\EM{\mathcal{L}}}

\newcommand{\SymDiff}{\EM{\triangle}}

\renewcommand{\hat}{\widehat}


\def\st{\,:\,}

\def\M{{\EM{\mathcal{M}}}}

\def\N{{\EM{\mathcal{N}}}}

\def\Pr{{\EM{\mathbb{P}}}}

\def\H{{\EM{\mathcal{H}}}}

\def\F{{\EM{\mc{F}}}}

\def\x{{\EM{\ol{x}}}}

\def\y{{\EM{\ol{y}}}}

\def\a{{\EM{\ol{a}}}}

\def\b{{\EM{\ol{b}}}}

\def\c{{\EM{\ol{c}}}}
\def\d{{\EM{\ol{d}}}}

\def\kk{{\EM{\ol{k}}}}
\def\mbar{{\EM{\ol{m}}}}

\newcommand{\id}{\ensuremath{\mathrm{id}}}

\def\L{{\EM{\mc{L}}}}

\newcommand{\defn}[1]{{\bf{#1}}}
\newcommand{\defas}{{\EM{\ :=\ }}}


\newcommand\len{\mathrm{len}}
\newcommand\range{\mathrm{range}}

\def\ordp{{\EM{\ol{\tau}}}}
\def\brap{{\EM{\widehat{\tau}}}}

\def\tind{{\EM{t_{\mathrm{ind}}}}}
\def\ind{{\EM{\mathrm{ind}}}}

\def\dist{{\EM{d_{1}}}}

\newcommand{\dee}{\mathrm{d}}
\newcommand{\boldx}{\mathbf{x}}
\newcommand{\Ind}{\mathbf{1}}

\def\Borel#1{\EM{\ol{#1}}}

\def\floor#1{\EM{\lfloor #1 \rfloor}}




\def\Nats{{\EM{{\mbb{N}}}}}

\def\^{\EM{{}^{\And}}}

\def\And{\EM{\wedge}}

\def\<{\EM{\langle}}
\def\>{\EM{\rangle}}

\def\nl{\newline}

\def\EM#1{\ensuremath{#1}}
\def\mbb#1{\EM{\mathbb{#1}}}
\def\mbf#1{\EM{\mathop{\pmb{#1}}}}
\def\mc#1{\EM{\mathcal{#1}}}

\def\ol#1{\EM{\overline{#1}}}

\def\ul#1{\underline{#1}}


%

\newcommand{\arity}{\EM{\mathrm{arity}}}

\newcommand{\Powerset}{\raisebox{.15\baselineskip}{\Large\ensuremath{\wp}}}





%


\definecolor{MyGreen}{rgb}{.75,0,.75}
\definecolor{RealGreen}{rgb}{0,1,0}
\definecolor{DarkGreen}{rgb}{0.1,0.4,0.1}
\definecolor{ActualGreen}{rgb}{0.0,0.5,0.0}
\definecolor{MyBlue}{rgb}{0,0,1}
\definecolor{MyRed}{rgb}{1,0,0}

\definecolor{darkred}{rgb}{0.5,0,0}
\definecolor{darkgreen}{rgb}{0, 0.3,0}
\definecolor{darkblue}{rgb}{0,0,0.6}

\definecolor{LightGray}{rgb}{.6,.6,.6}





\begin{document}


\title[Stable regularity for relational structures]
{Stable regularity for relational structures}

\author[Ackerman]{Nathanael Ackerman}
\address{
Harvard University\\
Cambridge, MA 02138\\
USA}
\email{nate@math.harvard.edu}

\author[Freer]{Cameron Freer}
\address{
	Remine\\
	Fairfax, VA 22031\\
USA
}
\email{cameron@remine.com}

\author[Patel]{Rehana Patel}
\address{
	Wheaton College\\ Norton, MA 02766\\ USA
}
\email{rrpatel@cantab.net}


\begin{abstract}
	We generalize the stable graph regularity lemma of Malliaris and Shelah to the case of finite structures in finite relational languages, e.g., finite hypergraphs.
	We show that under the model-theoretic assumption of stability, such a structure
	has an equitable regularity partition 
	of size polynomial in the reciprocal of the desired accuracy,
	and 
	such that for each
	$k$-ary relation
	and $k$-tuple of elements of the partition, the density
	is close to either $0$ or $1$.
	In addition, we provide regularity results for 
	finite and Borel structures that satisfy a weaker notion that we call \emph{almost stability}.
\end{abstract}

\maketitle


\setcounter{page}{1}
\thispagestyle{empty}

\begin{small}
\renewcommand\contentsname{\!\!\!\!}
\setcounter{tocdepth}{3}
\tableofcontents
\end{small}



\section{Introduction}
	
	Szemer\'edi's regularity lemma for graphs is a fundamental tool in combinatorics. It can be viewed as saying that every finite graph can be approximated by one that has a small ``structural skeleton'' overlaid with randomness. 
	Malliaris and Shelah \cite{MS} show that one can obtain more control over this approximation in the presence of a model-theoretic tameness condition known as \emph{stability}, that is essentially combinatorial in nature. In this paper, we extend the result of Malliaris and Shelah to the case of arbitrary finite structures in a finite relational language. In particular, our result yields better bounds on hypergraph regularity approximations in the presence of stability.
	
	The Szemer\'edi regularity lemma 
can be expressed more formally as saying that for any finite graph there is a partition of the vertices,
	known as a \emph{regularity
	partition}, such that the partition is 
\emph{equitable} (i.e., the sizes of the parts differ by at most $1$), and
	for all but a few 
	pairs 
	of (not necessarily distinct) elements of the partition,
	the induced subgraph on the vertices among that pair is
close to
a random bipartite graph (or random graph, if the parts are not distinct) 
having some edge density between $0$ and $1$.
The pairs for which this does not hold are called \emph{irregular}.
The accuracy of the approximation yielded by a regularity partition
	is measured both in terms of having few irregular pairs, and by the
	closeness of 
	each regular pair to a random (bipartite) graph. 
	The regularity lemma provides
	an upper bound on the size of a regularity partition that depends only on the desired accuracy of the approximation,
	and not on the particular graph being approximated.
	For details, see, e.g., \cite{MR2798368}.

While this bound on the size of the regularity partition depends only on the desired accuracy, in general one cannot guarantee a bound better than a tower of exponentials (of height that is polynomial in the reciprocal of the accuracy) 
	\cite{MR1445389}.
Further, it has long been known that if a graph contains a large \emph{half-graph} as an induced subgraph, then any regularity partition for the graph must have irregular pairs (independently observed by Lov\'asz, Seymour, and Trotter
and by
Alon, Duke, Leffman, R\"odl, and Yuster \cite{MR1251840}).

	Malliaris and Shelah \cite{MS} observed that the presence of a large induced half-graph corresponds to the absence of \emph{stability},
	a key property from model theory
	that provides a sense in which a combinatorial object 
	is highly structured, or tame
	(for details, see \cite{Classification-theory}). 
	Malliaris and Shelah \cite{MS}
	show that when a graph is stable, it admits a regularity partition with no irregular pairs, with a number of parts that is merely polynomial in the reciprocal of the accuracy, and where 
	for each pair of (not necessarily distinct) parts, 
	the induced bipartite graph 
	across the parts
	(or induced graph on the one part) 
	is either complete or empty.
	In other words, this polynomial-size partition of the vertices is such that for every pair $(V_1, V_2)$ of elements of the partition (possibly with $V_1 = V_2$), the induced subgraph on $V_1 \cup V_2$
	can be modified by a small number of edges so that either between every pair of distinct elements, one from $V_1$ and the other from $V_2$, there is an edge, or between every pair of distinct elements, one from $V_1$ and the other from $V_2$, there is no edge.
	In this case, the graph is close in edit distance to an \emph{equitable blow-up} of a small finite graph (possibly with self-loops).

	The regularity lemma for graphs has been generalized to finite structures in a finite relational language (see, e.g., \cite{2014arXiv1412.8084A}), a key case of which are the $k$-uniform hypergraphs (see, e.g.,
	\cite{MR2212136},
	\cite{MR2373376}, 
	\cite{MR2351688}, 
	and \cite{MR2964622}).
	The upper bounds on the partition size are even worse than for graphs, as Moshkovitz and Shapira have recently shown that the bounds are necessarily of Ackermann-type.
	The model-theoretic notion of stability also makes sense in the context of finite relational languages. In this paper, we extend Malliaris and Shelah's results to show that every finite stable structure in a finite relational language admits an equitable partition with polynomially many parts such that for every relation $R$ (of arity $k$, say) and every $k$-tuple $(V_1, \ldots, V_k)$ of parts (possibly with repetition),
	the induced substructure restricted to $R$
	on $V_1 \cup \cdots \cup V_k$ can be modified by a small number of ``$R$-edges'' so that either
	every $k$-tuple of elements
	in $V_1 \times \cdots \times V_k$
forms an $R$-edge, or
	every $k$-tuple of elements 
	in $V_1 \times \cdots \times V_k$
	does not form an $R$-edge. 
	In particular, the relational structure is close in edit distance to an equitable blow-up of a small structure in the same language. 
	This shows that in the stable case, not only is ``randomness'' in the $R$-edges eliminated in the approximation, but so are the ``intermediate levels'' that are a key complication of the general case of 
hypergraph regularity lemmas.
	Our proof closely follows the methods of \cite{MS}.

	In the case of finite
	relational structures  
	that are \emph{almost stable} 
	(in a sense that we make precise), we again show that the structure is close in edit distance
to an equitable blow-up of a small finite structure, albeit where the few edits may not be distributed  as uniformly as we can require in the stable case.
	Finally, we provide a similar regularity lemma for almost stable relational structures that are Borel.

	\subsection{Related work}
Expanding on Malliaris and Shelah's stable regularity lemma for graphs,
Malliaris and Pillay \cite{MR3451251} give a short proof of the stable regularity lemma for arbitrary Keisler measures. 
In this more general setting, they obtain most of the nice properties from the stable regularity lemma on graphs \cite{MS}, but they do not get precise bounds on the size of the partition.

Independently from our work in the present paper, Chernikov and Starchenko 
\cite{2016arXiv160707701C}
prove a stable regularity lemma for Keisler measures over finite and Borel structures in a language with a single relation. In the case of finite structures, their stable regularity lemma is closely related to our main result,  \cref{Theorem: Stable Regularity for Relational Structures}, restricted to languages with a single relation. However, while the partitions they obtain are definable (unlike ours), they need not be equitable. 

Chernikov and Starchenko also obtain two regularity lemmas for structures satisfying certain model-theoretic conditions other than stability, one for NIP structures
that generalizes a result of Lov\'asz and Szegedy \cite{MR2815610},
and one for distal structures, generalizing their earlier result \cite{2015arXiv150701482C}.

Generalizing Green's group-theoretic regularity lemma \cite{MR2153903}, Terry and Wolf
obtain a stable version for vector spaces over finite fields
\cite{2017arXiv171002021T}, and
Conant, Pillay, and Terry 
obtain a further generalization to arbitrary finite groups
\cite{2017arXiv171006309C}.

\subsection{Road map of the proof of the main result}

Before beginning our technical construction, we 
here provide a road map of the proof of the main result,
\cref{Theorem: Stable Regularity for Relational Structures}.
We will first describe how to ``augment'' relations and give a quick proof outline in terms of such augmented relations. Then we will provide more detail on three key aspects: obtaining $\varepsilon$-excellent sets, making a partition equitable, and modifying the original structure so that it is a blow-up.

Let $\Lang$ be a finite relational language, and let $\brap\in\Nats$. Suppose that $\M$ is a finite $\Lang$-structure such that none of its relations has the so-called $\brap$-branching property. (In fact, a slightly weaker hypothesis will suffice.)
In particular, $\M$ is stable.

We begin by augmenting every relation in $\M$. Each relation in $\M$ can be thought of as a $\{\top, \bot\}$-valued function of some arity. We replace each relation with a continuum-sized family of functions (indexed by $\varepsilon>0$) each of which takes values in $\{\top, \bot, \uparrow\}$, and further allow each argument to be either an element or a subset of $\M$.
In the case where exactly one argument is a subset of $\M$,
this will be done by ``polling'' the elements in a subset and assigning a truth value ($\top$ or $\bot$) if and only if a sufficiently large majority (namely, a ($1-\varepsilon$)-fraction) of the elements agree on that truth value (when all other arguments are fixed), and $\uparrow$ otherwise. However, when more than one argument is a subset, the polling is more complicated.
For a given order of arguments, we will define this notion of polling by induction on the number of arguments that are sets, in a way that depends on the order of arguments polled so far. 

These augmented relations will be used to construct
collections of so-called
\emph{$\varepsilon$-excellent sets,} that in particular are such that whenever all arguments of an augmented relation are $\varepsilon$-excellent then the (function indexed by $\varepsilon$ of the) augmented relation has a truth value (i.e., is assigned $\top$ or $\bot$).  

The proof outline is as follows.
Assume that $\M$ is large enough (relative to $\brap$). We first find, using the augmented relations, 
an $\varepsilon$-excellent partition of a large subset of $M$, the underlying set of $\M$. We then 
transform this into an equitable partition of $M$ into $(\varepsilon+\zeta)$-excellent sets
(where $\zeta$ depends only on $\varepsilon$).
Finally, we show that it is possible to change some $\varepsilon$-fraction of the (original) relations so that an equitable partition now describes this modification of $\M$ as exactly the
blow-up of a small finite structure, whose size (i.e., the number of elements of an equitable partition) is at most polynomial in $\varepsilon$, where the polynomial's exponent depends only on $\brap$ and the maximum arity of $\Lang$.

\subsubsection{$\varepsilon$-excellent sets}
Suppose $A\subseteq \M$. We now describe how to find an $\varepsilon$-excellent subset of $A$ that is 
\emph{big} in the sense that its size is among a particular collection of natural numbers determined by $\varepsilon$.
We show that a witness
to the non-$\varepsilon$-excellence of $A$ can be taken to consist of a relation $R$, an order of its arguments, an index $j$ among the $\arity(R)$-many arguments, an $(\arity(R)-1)$-tuple of sets $\<B_i\>_{i\neq j}$ 
(satisfying a certain additional property with respect to the order)
and two big
disjoint subsets 
$A_0$ and $A_1$,
such that the truth value assigned by
the augmentation of $R$ (with polling based on the given ordering) 
to $\<B_i\>_{i\neq j}$ along with $A_0$ in the $j$th coordinate is different from
the truth value that it assigns to $\<B_i\>_{i\neq j}$ along with $A_1$ in the $j$th coordinate.
Having found such a witness to the non-$\varepsilon$-excellence of $A$, we then
look for such witnesses to the non-$\varepsilon$-excellence of $A_0$ and of $A_1$.
We repeat this process on big disjoint subsets of $A_0$ and of $A_1$, etc., and stop as soon as some branch can go no farther (because we have reached some big subset of $A$ that itself has no such witness), after which the resulting binary tree of subsets of $A$ is perfect.
A \emph{mesa} is an object of the following sort that arises from 
a perfect tree of such witnesses: a finite perfect binary tree, each node of which is labeled by 
a triple consisting of a relation symbol, an index for one of the arguments of the relation, an ordering for the arguments of the relation, and certain witnessing subsets.
At least one node of a maximal mesa does not itself have witnesses;
we call such a node a
\emph{cap},
and it turns out that the height of any maximal mesa can be 
bounded above
in terms of $\brap$.
The intuitive idea is that a mesa is not too ``tall'', by virtue of not being too ``wide''; there can be many caps on it --- by virtue of any of which it doesn't get too ``tall''.

Mesas
have
three important properties. 
First, as already mentioned, each chosen subset of $A$ occurring in its tree 
is big (i.e., its size is in the special set of sizes).
Second, also as already noted, if the mesa is maximal,
then there must be at least one cap,
whose corresponding subset must therefore be $\varepsilon$-excellent. 
Third, from any mesa such that every node has the same labels for the relation, argument index, and argument order, we can extract a witness to the branching property of $\M$ of the same height as the mesa.

Next, by a Ramsey-theoretic result,
there is a function $f\colon \Nats \to \Nats$ such that $f= O(n \log n)$ with the following property: whenever $k\in\Nats$ and
$T$ is a perfect binary tree with height $f(k)$, each node of which is labeled by a triple
consisting of a
relation symbol, an index for one of the arguments of the relation, and an ordering
for the arguments of the relation, there is a perfect subtree of $T$ of height $k$
such that every node of the subtree has the same label.
In particular, this holds of a mesa. Hence from a bound on the branching property for $\M$ we may obtain a bound on the height of any mesa arising from $\M$.

Because we have bounds on how much the sets decrease in size as one proceeds down a mesa, the bound on the height of the mesa induces a bound on the size of the excellent sets. In aggregate, using the fact
that no relation has the $\brap$-branching property,
we can find
a constant $c_{\varepsilon}$ such that any set $A$ has an $\varepsilon$-excellent set of size at least $c_{\varepsilon}\cdot |A|$. 

\subsubsection{Equitable partitions}
We now describe in more detail how we
find an equitable partition of ``most'' of $\M$ consisting of $(\varepsilon+\zeta)$-excellent sets. 
Using the
method for extracting excellent subsets that have size at least a positive fraction, we repeat this procedure to get a partition of ``most'' of the structure where every element of the partition is excellent and the size of the partition is bounded 
in terms of $\varepsilon$.
We then aim to modify this partition to an equitable one while only increasing the error slightly.
The allowable sizes for a ``big'' set in fact were chosen so that their greatest common denominator is also in the set. 
Consider a random, equitable, refinement of the original partition where the size of each element is this greatest common divisor. Using the fact that all relations of $\M$ are appropriately stable, 
the limiting properties of certain hypergeometric distributions imply that
with high probability a random such partition is $(\varepsilon + \zeta)$-excellent provided that the structure underlying the partition is ``large''.
In particular, this implies that there is some such equitable $(\varepsilon+ \zeta)$-refinement.

\subsubsection{Modifying the original structure}
We now describe how to change the truth values of each relation on just
an $(\varepsilon \cdot r)$-fraction
of the elements (where $r$ is the arity of the relation),
so that the resulting structure is the 
blow-up of a finite structure of size bounded by a polynomial in $\varepsilon^{-1}$. 
This modification of the structure has two parts. 
First, we show that for any $\varepsilon$-excellent partition of ``most'' of $\M$, the relations may be modified on a
small portion of the elements so as to obtain a
partition of the same set which is ``indiscernible'' (i.e., a blow-up of a finite structure). 
Next we have to deal with the (small number of) elements of $\M$ not in any part of the original partition. 
We show that if we add such elements to parts of the partition arbitrarily (while keeping the partition equitable), we may then modify relations on these elements (with respect to the other elements) so that in the modified structure the relations agree with the other elements within the part to which they were assigned. In aggregate these actions only require us to change the relations on a small fraction of the elements, yielding a structure that is exactly a blow-up while being close to the original.

\subsection{Notation}

We now introduce some notation and conventions that we will use throughout the paper. 

All logarithms are in base 2, and are denoted by $\log$ (with no subscript).

In this paper, $\Lang$ denotes a fixed finite relational language. All $\Lang$-formulas are first-order. We consider equality to be a logical symbol and not a member of \Lang. 

For any relation $E \in \Lang$, let $\arity(E)$ denote the arity of $E$. We will also need the following two quantities related to the arities of relations in \Lang;  let
\[
    q_\Lang \defas \max\{\arity(E) \st E \in \Lang\}
\]
and 
\[
	n_\Lang \defas |\Lang| \cdot q_\Lang. 
	\]

We consider an $n$-element sequence $\a$ of elements of $A$ to be a map of the form $\a\colon \{0, \ldots n-1\} \to A$, and therefore $\emptyset$ is the empty sequence, 
and $\range(\a)$ is the set of elements occurring in the sequence $\a$.
We also 
write $\len(\a) = n$ for the length of such a sequence, and
identify 
$\a$ with the tuple of its elements $\<\a(0), \ldots, \a(n-1)\>$.

For finite tuples
$\<a_i\>_{i < n}$ and $\<b_j\>_{j < m}$, we say that $\<a_i\>_{i < n}$ is an \emph{initial segment} of $\<b_j\>_{j < m}$, written
\[ 
    \<a_i\>_{i < n} \preceq \<a_j\>_{j < m},
\]
when $n \leq m$ and when $a_i = b_i$ for all $i < n$. 
Given a tuple $\a = \<a_0, \ldots, a_{n-1}\>$ and an element $b$, we 
write $\a\^b$ to
denote the tuple $\<a_0, \ldots , a_{n-1}, b\>$.

We refer to the elements of a partition as its \emph{parts}.
We now introduce two special kinds of partitions.  
An \emph{equitable} partition is one whose parts differ in size by at most 1. 

\begin{definition}
Suppose $\M$ is an $\Lang$-structure with underlying set $M$. We say that $P$ is a partition of $\M$ if it is a partition of $M$. We say that $P$ is \defn{equitable} if for any $p_o, p_1 \in P$, 
\[
    \bigl | \, |p_0| - |p_1| \,\bigr| \leq 1.
\]
\end{definition}

An \emph{indivisible} partition of an $\Lang$-structure is one for which, given any tuple,  whether or not a relation holds of the tuple depends only on which respective parts of the partition the elements of the tuple are in.

\begin{definition}
We say that a partition $P$ of an $\Lang$-structure $\M$ is \defn{indivisible} if for each relation $E\in\Lang$, for all 
$p_0, \dots, p_{\arity(E)-1} \in P$,
and for any pair of tuples $\<a_i^0\>_{i < \arity(E)}, \<a_i^1\>_{i < \arity(E)}$ such that $a_i^0, a_i^1 \in p_i$, where $0 \le i < \arity(E)$, we have
\[
\M \models E(a_0^0, \dots, a_{\arity(E)-1}^0) \leftrightarrow E(a_0^1, \dots, a_{\arity(E)-1}^1).
\]
\end{definition}

Note that a  partition of an $\Lang$-structure is indivisible when we can 
obtain an $\Lang$-structure
by quotienting 
out 
by 
the equivalence relation induced by the partition.

\begin{definition}
	Suppose $\M$ and $\N$ are $\Lang$-structures with underlying sets $M$ and $N$ respectively. A map $\alpha\colon M \to N$ is a \defn{full homomorphism} from $\M$ to $\N$ if for each relation $E \in \Lang$ and all tuples $a_0, \dots, a_{\arity(E)-1} \in M$ of 
	elements of $M$, 
\[
\M \models E(a_0, \dots, a_{\arity(E)-1}) \text{ if and only if } \N \models E(\alpha(a_0), \dots, \alpha(a_{\arity(E)-1})).
\]
\end{definition}
Note that full homomorphisms are not necessarily injective.

\begin{definition}
	\label{gen-blowup}
An $\Lang$-structure $\M$ is a \defn{blow-up} of an $\Lang$-structure $\N$
when there is a surjective full homomorphism $i\colon \M \to \N$. We call $i$ the \defn{witness} to the blow-up. 

If further the sets
	$i^{-1}(\{b_0\})$ and $i^{-1}(\{b_1\})$ differ in size by at most one,
	for all $b_0, b_1 \in \N$, 
	then $\M$ is an \defn{equitable blow-up} of $\N$. 
\end{definition}

The regularity lemmas that we obtain in this paper can be seen as stating that certain types of structures
are close in edit distance to a blow-up of a small finite structure.

The following easy lemma, whose proof we omit, makes precise the notion that an $\Lang$-structure with an indivisible partition can be thought of as blow-up of a smaller $\Lang$-structure. 

\begin{lemma}
For an $\Lang$-structure $\M$ and a partition $P$ of $\M$ the following are equivalent.

	\begin{itemize}
\item $P$ is indivisible. 

\item There exists an  $\Lang$-structure $\N$ such that $\M$ is a blow-up of $\N$ with witness $i$ such that 
\[
P = \{i^{-1}(\{b\})\st b \in N\}. 
\]
	\end{itemize}

Furthermore, $\M$ is an equitable blow-up of $\N$ if and only if $P$ is equitable. 
\end{lemma}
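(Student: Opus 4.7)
The plan is to prove the two directions of the biconditional separately via the standard quotient-structure construction, and to observe that the equitability clause drops out for free in either direction. For the forward implication, assuming $P$ is indivisible, I will construct $\N$ on underlying set $N \defas P$ by declaring, for each $E \in \Lang$ of arity $k$ and each $(p_0, \ldots, p_{k-1}) \in P^k$, that $\N \models E(p_0, \ldots, p_{k-1})$ holds if and only if $\M \models E(a_0, \ldots, a_{k-1})$ for some --- equivalently, by indivisibility, for every --- choice of representatives $a_i \in p_i$. Indivisibility is precisely the hypothesis that makes this definition independent of the choice of representatives, so $\N$ is well-defined. The map $i\colon M \to N$ sending each element to the unique part of $P$ containing it is then a surjective full homomorphism directly from the way the relations on $\N$ were defined, and $i^{-1}(\{p\}) = p$ for every $p \in P$, so the required identification $P = \{i^{-1}(\{b\}) \st b \in N\}$ is immediate.

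For the reverse implication, suppose a surjective full homomorphism $i\colon \M \to \N$ is given with $P = \{i^{-1}(\{b\}) \st b \in N\}$. To verify indivisibility, fix $E \in \Lang$, parts $p_0, \ldots, p_{\arity(E)-1} \in P$, and two tuples $\<a_i^0\>_{i<\arity(E)}, \<a_i^1\>_{i<\arity(E)}$ with $a_i^0, a_i^1 \in p_i$ for each $i$. Since each $p_i$ is a fiber of $i$, we have $i(a_i^0) = i(a_i^1)$; call this common value $b_i$. Applying the full homomorphism condition twice then gives
\[
\M \models E(a_0^0, \ldots, a_{\arity(E)-1}^0) \iff \N \models E(b_0, \ldots, b_{\arity(E)-1}) \iff \M \models E(a_0^1, \ldots, a_{\arity(E)-1}^1),
\]
which is exactly the indivisibility condition.

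The equitability clause then requires no additional argument: in both directions the parts of $P$ coincide with the fibers $i^{-1}(\{b\})$, so $P$ is equitable in the sense of the paper precisely when these fibers differ pairwise in size by at most one, which is the definition of an equitable blow-up. There is no substantive obstacle here, since the lemma is essentially a repackaging of the familiar quotient-structure construction in the terminology introduced earlier in the section; the only step that demands any care is checking well-definedness of the relations on $\N$ in the forward direction, which is the content of indivisibility itself.
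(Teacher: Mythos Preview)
Your proof is correct. The paper in fact omits the proof of this lemma entirely, calling it an ``easy lemma, whose proof we omit,'' so there is no approach to compare against; the quotient-structure construction you give is the standard argument the authors evidently had in mind.
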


Intuitively, $\M$ is a blow-up of $\N$ if it can be obtained by replacing each element of $\N$ with an indiscernible set, while $\M$ is an equitable blow-up of $\N$ if these indiscernible sets are all almost the same size.

\subsection{Stability}

We now recall some basic definitions and facts from stability theory, following the exposition in
Malliaris and Shelah \cite{MS}.

\begin{definition}
Let $\ordp \in \Nats$. An $\Lang$-formula $\varphi(\x; \y)$ has the \defn{$\ordp$-order property} in an $\Lang$-structure $\M$ when there exist tuples $\<\a_i\>_{i<\ordp} \subseteq \M$ (with $\len(\a_i)=\len(\x$ for all $i < \ordp$) and $\<\b_j\>_{j<\ordp} \subseteq \M$ (with $\len(\b_j) = \len(\y)$ for all $j < \ordp$) such that for all $i, j < \ordp$,
\[
    \M\models \varphi(\a_i; \b_j) \Leftrightarrow i < j.
\]

We say that $\varphi(\x;\y)$ has the \defn{non-$\ordp$-order property} in $\M$ when it does not have the $\ordp$-order property in $\M$. 
\end{definition}

Note that the $\ordp$-order property is defined 
for a formula along with a given partition of its free variables, not just for the formula alone.

We will in fact work with a combinatorial property that holds in a structure essentially whenever the $\ordp$-order property does.

\begin{definition}
Let $\brap \in \Nats$. An $\Lang$-formula $\varphi(\x; \y)$ has the \defn{$\brap$-branching property} in an $\Lang$-structure $\M$ when there exist tuples $\<\a_i\>_{i\in \{0,1\}^\brap} \subseteq \M$ (with $\len(\a_i)=\len(\x)$ for all $i \in \{0,1\}^\brap$) and $\<\b_j\>_{j \in \{0,1\}^{<\brap}} \subseteq \M$ (with $\len(\b_j) = \len(\y)$ for $j\in \{0,1\}^{< \brap}$)
such that for all $i\in \{0,1\}^\brap$, for all $j \in \{0,1\}^{<\brap}$, and for each $h \in \{0, 1\}$, we have that
\[
    j\^h \preceq i
\]
implies
\[
    \M\models \varphi(\a_i; \b_j) \Leftrightarrow (h = 1).
\]

We say that $\varphi(\x;\y)$ has the \defn{non-$\brap$-branching property} in $\M$ when it does not have the $\brap$-branching property in $\M$. 
\end{definition}

We now state a connection between the non-$\ordp$-order property and the non-$\brap$-branching property for a structure $\M$.

\begin{lemma}[{\cite[Lemma 6.7.9]{MR1221741}}]
	\label{Lemma: Branching property from order property}
If $\varphi(\x; \y)$ has the non-$\ordp$-order property in $\M$ then $\varphi(\x; \y)$ has the non-$2^{\brap}$-branching property in $\M$, where $\brap=2^{\ordp+2}-2$. On the other hand, if $\varphi(\x;\y)$ has the non-$\brap$-branching property in $\M$ then $\varphi(\x;\y)$ has the non-$2^{\ordp}$-order property in $\M$, where $\ordp = 2^{\brap+1}$.
\end{lemma}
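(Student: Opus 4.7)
The lemma is an equivalence of two combinatorial properties (non-order vs.\ non-branching) with specific quantitative bounds. I would prove each direction by contrapositive: from a witness to one property, construct a witness to the other.

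\textbf{From $2^{\ordp}$-order to $\brap$-branching.} Suppose $\varphi(\x;\y)$ has the $2^{\ordp}$-order property with $\ordp=2^{\brap+1}$; in fact $2^{\brap}$-order already suffices, so the stated hypothesis leaves enormous slack. Fix witnesses $\langle\a_i\rangle_{i<2^{\brap}}$ and $\langle\b_j\rangle_{j<2^{\brap}}$. Label the leaves of the complete binary tree of height $\brap$ by their binary expansions, so each leaf $i\in\{0,1\}^{\brap}$ is assigned an integer $n(i)\in\{0,\dots,2^{\brap}-1\}$, and label each internal node $j$ at depth $k$ by the midpoint $m(j)$ of the integer range its subtree spans. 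Setting $\a'_i\defas\a_{n(i)}$ and $\b'_j\defas\b_{m(j)}$, the binary-search invariant gives $\varphi(\a'_i;\b'_j)\Leftrightarrow n(i)<m(j)$, which is the branching biconditional (after relabelling $0$ and $1$ throughout the tree, since our construction naturally identifies the ``left'' child with the case $n(i)<m(j)$).

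\textbf{From $2^{\brap}$-branching to $\ordp$-order.} Suppose $\varphi$ has the $2^{\brap}$-branching property with $\brap=2^{\ordp+2}-2$, and fix a witnessing tree. My plan is to extract candidate order witnesses using a tree spine together with single-deviation side branches, and then to apply Ramsey's theorem to pin down the entries that the branching biconditional does not already determine. Concretely, fix the leftmost root-to-leaf spine; for each depth $k<2^{\brap}$ let $\a^\star_k$ be the leaf deviating from the spine only at depth $k$ (going right there and then left all the way down) and $\b^\star_k=\b_{0^k}$ the spine node at depth $k$. The branching biconditional determines $\varphi(\a^\star_k;\b^\star_m)$ for $m\le k$ but leaves the values with $m>k$ unconstrained. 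I would then $2$-colour the pairs $\{k<m\}$ by the value $\varphi(\a^\star_k;\b^\star_m)$ and invoke Ramsey's theorem for pairs; since $2^{\brap}$ vastly exceeds $R(\ordp+1,\ordp+1)$, a monochromatic subset $S$ of size $\ordp+1$ exists. The favourable colour produces a strictly monotone $\varphi$-matrix on $S$ which, after a single index shift, is exactly an $\ordp$-order witness. The unfavourable colour yields a degenerate ``equality''-type configuration; this case I would handle by re-running the construction on the rightmost spine, whose pinned values are dual and whose Ramsey outcome exposes the complementary side, or by iterating Ramsey on a richer class of multi-deviation side branches.

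\textbf{Main obstacle.} The second direction is the substantive one. The branching property constrains $\varphi$ only on ancestor-descendant pairs, so extracting a totally ordered configuration requires pinning down $\varphi$ on non-ancestor pairs too, and this is what forces the Ramsey step. The doubly-exponential slack $\brap=2^{\ordp+2}-2$ is tuned to absorb both the Ramsey blow-up and the case analysis needed when the Ramsey colour lands on the degenerate configuration; navigating this case cleanly, via an Erd\H{o}s-Rado-style indiscernible extraction or otherwise, is the delicate part, which is why the lemma is cited to Hodges rather than reproved inline.
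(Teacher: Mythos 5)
The paper invokes this lemma by citation to Hodges and does not reprove it, so there is no inline argument to compare against; the assessment below is of your plan on its own terms. Your first direction (order implies branching, via the binary-search labelling of the complete tree) is correct up to the relabelling sign flip you already flag, and you are right that the $2^\brap$-order property already yields the $\brap$-branching property, so the stated bound has huge slack.

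The second direction has a genuine gap. The spine construction pins only the entries $\varphi(\a^\star_k;\b^\star_m)$ with $m\le k$; after Ramsey the full matrix is one of four shapes --- the desired order pattern, its reverse, the ``equality'' pattern $\varphi\Leftrightarrow k=m$, or its complement $\varphi\Leftrightarrow k\ne m$ --- and trying the opposite spine does not rule out the last two. Concretely, let $M$ be the node set of a complete binary tree of height $N$ and declare $\varphi(\a;\b)$ to hold iff $\a$ is a leaf, $\b$ is an internal node, and either $\a$ lies strictly to the left of the subtree rooted at $\b$ or $\a$ lies below the right child of $\b$ (with $\varphi$ false on all other pairs). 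This realizes the $N$-branching property. With the leftmost spine ($\b^\star_m=0^m$, $\a^\star_k=0^k10^{N-k-1}$), every pair with $m>k$ has $\a^\star_k$ strictly to the right of $\b^\star_m$'s subtree, so $\varphi$ fails there, and the matrix is the equality pattern outright. With the rightmost spine the corresponding pairs fall to the left, $\varphi$ holds, and the matrix is the complement-of-equality pattern. Both spines are thus degenerate, yet $\varphi$ does have a long order witness: take the comb $\b_j=1^j0$ and $\a_j=1^j0^{N-j}$, and one checks directly that $\varphi(\a_i;\b_j)\Leftrightarrow i<j$. The extraction therefore cannot stay on a single root-to-leaf path; your ``iterate Ramsey over multi-deviation side branches'' clause points toward the right kind of object but is exactly the step that still needs an argument, and as written it is a gap rather than a proof.
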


While we have defined the order and branching properties for arbitrary formulas and partitions of their variables, we 
will focus on
the situation where these formulas are relation symbols of $\Lang$.

\begin{definition}
Let $\M$ be an $\Lang$-structure. We say that $\M$ has the 
	\defn{non-$\ordp$-order property}
	(\defn{non-$\brap$-branching property})
	if for each 
relation $E \in \Lang$ and each $0\leq j < \arity(E)-1$, the formula $E(x_0, \dots, x_{\arity(E)-1})$ with the partition of variables $(x_j; x_0, \dots, x_{j-1}, x_{j+1}, \dots, x_{\arity(E)-1})$ has the 
	non-$\ordp$-order property
	(non-$\brap$-branching property)
	in $\M$. 
\end{definition}

We will be interested in the case where 
$\M$ has the non-$\ordp$-order property for some $\ordp\in\Nats$,
and will work in the case where $\M$ has the non-$\brap$-branching property
for a corresponding $\brap$.

For the rest of this paper, fix $\brap \in \Nats$.

\section{Excellence}

From now on, let $\M$ be a finite $\Lang$-structure with underlying set $M$. We will prove our regularity lemmas by showing that under appropriate stability assumptions we can find,
for any subset $A$ of $M$,
a partition of $A$ with respect to which the induced substructure on $A$
is ``almost'' a blow-up. 
To do this, 
we use a notion called \emph{$\varepsilon$-excellence}, 
generalizing the definition from Malliaris and Shelah \cite{MS}, 
which captures this idea of being almost a blow-up. 

We begin by allowing relations to hold both of elements and subsets of $M$.
Let $\hat{M} \defas M \cup \Powerset(M)$ where $\Powerset(M)$ denotes the power set of $M$. We now define how to augment a relation on $M$ to be on all of $\hat{M}$ (for a given tolerance $\varepsilon$).

Write 
	$\top$ and $\bot$ to denote the ``truth values'' true and false, respectively, and $\uparrow$ for an ``indeterminate'' value.

\begin{definition}
Let $0 < \varepsilon < \frac{1}{2}$, let $E \in \Lang$ of arity $n$, and let $\mbar$ be a tuple of distinct elements of $\{0, \dots, n-1\}$. Define, inductively on the length of $\mbar$, the collection of \defn{$\varepsilon$-partial relations for $E$}. Each such partial relation is a 
	function parametrized by $\mbar$ and $\varepsilon$, of the form $\hat{E}_{\varepsilon}^{\mbar}\colon \hat{M}^n \to \{\top, \bot, \uparrow\}$.

Let $A_0, \dots, A_{n-1} \in \hat{M}$ and let $S \defas \{i < n \st A_i \in \hat{M} \setminus M\}$.
If $S \neq \range(\mbar)$,
then define
\[
\hat{E}_{\varepsilon}^\mbar(A_0, \dots, A_{n-1}) \defas \uparrow.
\]

Otherwise, when $S = \range(\mbar)$, we will define $\hat{E}_{\varepsilon}^\mbar(A_0, \dots, A_{n-1})$ by induction on $\ell \defas \len(\mbar)$, as follows. \nl\nl
\ul{Case $\ell = 0$:}
In this case, $\mbar = \emptyset$, and so $S = \emptyset$.
	In particular, $A_0, \ldots, A_{n-1}$ are elements of $M$.
	Define
\begin{itemize}
\item $\hat{E}^\emptyset_{\varepsilon}(A_0, \dots, A_{n-1}) \defas \top$ \quad if \quad $M \models E(A_0, \dots, A_{n-1})$, and 
\item $\hat{E}^\emptyset_{\varepsilon}(A_0, \dots, A_{n-1}) \defas \bot$ \quad if \quad $M \models \neg E(A_0, \dots, A_{n-1})$.
\end{itemize}

\ \nl
\ul{Case $\ell \ge 1$:}\nl
	Let $\kk$ be the initial subtuple of $\mbar$ of length $\ell - 1$, and let $j \defas \mbar(\ell-1)$ be the last element of $\mbar$, so that $\mbar = \kk \^j$.
	Because $\mbar$ is a tuple of distinct elements,
	observe that 
$\kk\colon \{0, \dots, \ell-2\} \to S \setminus \{j\}$ is a bijection. For each $\delta \in \{\top, \bot\}$, define
\[
A_{\mbar}^\delta \defas \{a \in A_j \st \hat{E}^\kk_{\varepsilon}(A_0, \dots, A_{j-1}, a, A_{j+1}, \dots, A_{n-1}) = \delta\}.
\]

\begin{itemize}
\item If 
$\displaystyle \frac{|A_{\mbar}^\top|}{|A_j|} > 1- \varepsilon$
then 
define $\hat{E}^{\mbar}_{\varepsilon}(A_0, \dots, A_{n-1}) \defas \top$.
		\vspace*{5pt}

\item If 
$\displaystyle \frac{|A_{\mbar}^\bot|}{|A_j|} > 1- \varepsilon$
then  define
$\hat{E}^{\mbar}_{\varepsilon}(A_0, \dots, A_{n-1}) \defas \bot.$
		\vspace*{5pt}

\item Otherwise define $\hat{E}^{\mbar}_{\varepsilon}(A_0, \dots, A_{n-1}) \defas  \uparrow$.
\end{itemize}
\end{definition}

Note that the last three bullet points are mutually exclusive as $\varepsilon < \frac12$.

To illustrate this definition, we walk through the cases where $|S| \leq 2$ and $\range(\mbar) = S$. 
Recall that $|S|$ is the number of arguments of $\hat{E}^{\mbar}_{\varepsilon}$ that are subsets of $M$. 
First consider the case where $|S| = 0$. We then have $\mbar = \emptyset$, and all $A_i$ are elements of $M$, and so we let $\hat{E}^\mbar_\varepsilon$ agree with the 
relation $E$ on $\<A_0, \ldots, A_{n-1} \>$. 

Next consider the case where $|S| =1$, with say $S = \{j\}$, i.e., when there is a unique element $A_j$ of $\hat{M}\setminus M$ among the arguments $A_0, \dots, A_{n-1}$. In this case we let $\hat{E}^{\<j\>}_\varepsilon(A_0, \dots, A_{n-1})$ be $\top$ if, when we fix $A_1, \dots, A_{j-1}, A_{j+1}, \dots, A_{n-1}$ and let the $j$th entry vary among the elements of $A_j$, at least a ($1-\varepsilon$)-fraction of the elements return a value of $\top$; and similarly for $\bot$. If this does not occur, i.e., if there is no ``near-consensus'' among the elements of $A_j$, then we return $\uparrow$ signifying that its value is indeterminate. 

Finally consider the case when $|S| = 2$, with say $S = \{p, q\}$. Suppose we have defined $\hat{E}^\kk_\varepsilon$ whenever $|\range(\kk)| = 1$. In other words, we have already defined both $\hat{E}^{\<p\>}_\varepsilon$ and $\hat{E}^{\<q\>}_\varepsilon$.
We would like to perform a similar sort of consensus-gathering to determine the values of 
$\hat{E}^{\<p,q\>}_\varepsilon$ and
$\hat{E}^{\<q,p\>}_\varepsilon$.
In the first case, replace $A_q$ by an element of $A_q$, and see if there is a near-consensus as this element varies within $A_q$, using the previously-defined $\hat{E}^{\<p\>}_\varepsilon$. In the second case, replace $A_p$ by an element of $A_p$, and likewise see if there is a near-consensus as it varies, using $\hat{E}^{\<q\>}_\varepsilon$.

Note that when there are at least two sets from $\hat{M}\setminus M$ among the arguments $A_0, \dots, A_{n-1}$, the order in which they are considered in the inductive definition matters (and indeed the superscript $\kk$ of $\hat{E}^\kk_\varepsilon$ keeps track of this order).
As we will see, we will mainly be interested in elements of $\hat{M}$ that have a property called \emph{$\varepsilon$-excellence}, which implies that the same truth value is returned no matter in which order we consider the arguments (i.e., where $\hat{E}^\kk_\varepsilon$ depends only on $\range(\kk)$ and not on the order in which the entries occur).

In order to define the notion of $\varepsilon$-excellence, we first need to define a notion of \emph{$(\varepsilon, \ell, E)$-goodness} for elements of $\hat{M}$, where $E \in \L$ and 
$0 \le \ell \le \arity(E)$.

\begin{definition}
Let $\varepsilon > 0$, let $E \in \Lang$ of arity $n$, and let $\ell \le n$. Define the notion of \defn{$(\varepsilon, \ell, E)$-goodness} for an element $A_0 \in \hat{M}$ by induction on $\ell$ as follows. \nl\nl
\ul{Case $\ell = 0$:}\nl
$A_0 \in \hat{M}$ is $(\varepsilon, 0, E)$-good if and only if $A_0 \in M$. \nl\nl
\ul{Case $\ell \ge 1$:}\nl
$A_0 \in \hat{M}$ is $(\varepsilon, \ell, E)$-good if and only if $A_0$ is $(\varepsilon, k, E)$-good for $1 \leq k < \ell$ and for all
\begin{itemize}
\item $A_1, \dots, A_{\ell-1} \in \hat{M}\setminus M$ such that 
		$A_i$ is $(\varepsilon, \ell-i, E)$-good
	for every $1\leq i < \ell$, 
\item $A_{\ell}, \dots, A_{n-1} \in M$, and
\item permutations $\sigma$ of $\{0, \ldots ,n-1\}$,
\end{itemize}
we have
\[
	\hat{E}^{\<\sigma(\ell-1), \dots, \sigma(1), \sigma(0)\>}_\varepsilon(A_{\sigma(0)}, A_{\sigma(1)}, \dots, A_{\sigma(n-1)}) \in \{\top, \bot\}.
\]

We say that $A\in\hat{M}$ is \defn{$\varepsilon$-excellent} when $A$ is $(\varepsilon, \arity(E), E)$-good
	for all relation symbols $E \in \Lang$.
\end{definition}

Note that 
in the case where $\M$ is a (symmetric) graph with edge relation $E$, 
our notion of $(\varepsilon,1, E)$-goodness is the same as
$\varepsilon$-goodness in \cite{MS}. Our more general 
definitions
allow us to
generalize their proof to arbitrary finite relational languages.

Again we illustrate
the cases $\ell = 1$ and $2$. First, $A_0\in\hat{M}\setminus M$ is $(\varepsilon, 1, E)$-good when $\hat{E}^\mbar_\varepsilon$ returns a truth value on any collection of arguments such that $A_0$ is the only argument from $\hat{M} \setminus M$.

Next,
$A_0\in \hat{M}\setminus M$ is $(\varepsilon, 2, E)$-good if it is $(\varepsilon, 1, E)$-good and further, for all $(\varepsilon, 1, E)$-good $A_1$, any $\varepsilon$-partial relation whose only arguments from $\hat{M}\setminus M$ are $A_0$ and $A_1$ returns a truth value when we first
vary the elements of $A_0$ and then vary the elements of $A_1$
(and this holds no matter where $A_0$ and $A_1$ occur as arguments in the relation). 

The notion of $(\varepsilon, \ell, E)$-goodness generalizes this idea. For $\ell\ge 2$, an $(\varepsilon, \ell-1, E)$-good set $A_0$ is $(\varepsilon, \ell, E)$-good if, 
for $1 \le j \le \ell$,
we have $A_1, \ldots, A_{j-1} \in \hat{M} \setminus M$ such that 
$A_1$ is $(\varepsilon, j-1 , E)$-good, $A_2$ is $(\varepsilon, j-2, E)$-good, $\ldots$, and $A_{j-1}$ is $(\varepsilon, 1, E)$-good, 
then any $\varepsilon$-partial relation which first varies $A_0$, then varies $A_1$, $\dots$, and finally varies $A_{j-1}$ always returns a truth value (no matter what the remaining arguments are from $M$). 

Note that if $A$ is $(\varepsilon, \ell, E)$-good and $1 \le \ell^* < \ell$ then $A$ is also $(\varepsilon, \ell^*, E)$-good. So in particular, if $A$ is $\varepsilon$-excellent then $A$ is $(\varepsilon, \ell^*, E)$-good for all $\ell^* \leq n$, where $n=\arity(E)$. This means that if $A_0, \dots, A_{n-1}$ are all $\varepsilon$-excellent then $\hat{E}^\mbar_\varepsilon(A_0, \dots, A_{n-1})$ must have a truth value. 
We can preserve goodness while weakening the tolerance $\varepsilon$, leading to the following straightforward but crucial observation. 

\begin{lemma}
	Let
$E \in \Lang$, and suppose $1 \le \ell^* \le \ell$ and $0 < \varepsilon \le \varepsilon^*$. If $A\in \hat{M}\setminus M$ is $(\varepsilon, \ell, E)$-good, then $A$ is $(\varepsilon^*, \ell^*, E)$-good. 
\end{lemma}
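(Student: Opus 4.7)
The proof rests on a monotonicity property of the augmented relations themselves: for $0 < \varepsilon \le \varepsilon^*$, any $\mbar$, and any $A_0, \ldots, A_{n-1} \in \hat{M}$, if $\hat{E}^{\mbar}_{\varepsilon}(A_0, \ldots, A_{n-1}) \in \{\top, \bot\}$, then $\hat{E}^{\mbar}_{\varepsilon^*}(A_0, \ldots, A_{n-1})$ takes the same truth value. I would prove this by induction on $|\mbar|$. The base case $|\mbar| = 0$ is trivial since $\hat{E}^{\emptyset}_{\varepsilon}$ does not depend on $\varepsilon$; for the inductive step with $\mbar = \kk \^ j$, applying the inductive hypothesis pointwise in $A_j$ shows that the ``vote sets'' $A_{\mbar}^{\delta}$ (which depend implicitly on $\varepsilon$) satisfy an inclusion from the $\varepsilon$-version into the $\varepsilon^*$-version for each $\delta \in \{\top, \bot\}$. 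Together with $1 - \varepsilon \ge 1 - \varepsilon^*$, this shows that a threshold met at $\varepsilon$ is still met at $\varepsilon^*$, while $\varepsilon^* < \frac{1}{2}$ rules out both the $\top$- and $\bot$-thresholds being met simultaneously.

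Granted this observation, monotonicity in $\ell$ (passing from $\ell$ to $\ell^* \le \ell$) is already built into the definition of $(\varepsilon, \ell, E)$-goodness, which explicitly requires $(\varepsilon, k, E)$-goodness for all $1 \le k < \ell$. So it suffices to prove monotonicity in $\varepsilon$ at fixed $\ell$, which I would do by induction on $\ell$. The base case $\ell = 1$ is immediate from the observation, since the defining condition quantifies only over element arguments in $M$. For the inductive step at $\ell \ge 2$, the requirement that $A$ be $(\varepsilon^*, k, E)$-good for $1 \le k < \ell$ follows from the inductive hypothesis applied to the $(\varepsilon, k, E)$-goodness of $A$; and for the main condition at level $\ell$, one fixes auxiliary sets $A_1, \ldots, A_{\ell-1}$ that are $(\varepsilon^*, \ell - i, E)$-good and traces the recursive evaluation of $\hat{E}^{\mbar}_{\varepsilon^*}$, invoking at each inner polling step the $(\varepsilon^*, \cdot, E)$-goodness of the relevant auxiliary set and, at the outermost step, the monotonicity observation to transfer the truth value established at tolerance $\varepsilon$ by the hypothesis on $A$.

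The main obstacle I foresee is reconciling the mismatch in the quantifier ranges at the inductive step: the hypothesis that $A$ is $(\varepsilon, \ell, E)$-good directly controls the evaluation only for auxiliary sets that are $(\varepsilon, \ell - i, E)$-good, whereas $(\varepsilon^*, \ell, E)$-goodness of $A$ must be verified against the strictly larger class of $(\varepsilon^*, \ell - i, E)$-good sets. The intended resolution is that the recursive evaluation of $\hat{E}^{\mbar}_{\varepsilon^*}$ intrinsically uses only the $(\varepsilon^*, \cdot, E)$-goodness of the auxiliary sets to guarantee that every inner polling step returns a truth value, while $A$'s hypothesis, via the monotonicity observation, is used solely to guarantee that the outermost vote clears the relaxed threshold $1 - \varepsilon^*$.
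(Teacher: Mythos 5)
The monotonicity observation you state for the augmented relations is correct, and your argument for it (inclusion of the $\varepsilon$-vote sets into the $\varepsilon^*$-vote sets, plus $\varepsilon^* < \tfrac12$ to rule out a $\top/\bot$ clash) is sound. Likewise, the $\ell$-monotonicity at fixed $\varepsilon$ is indeed built into the definition, and your base case $\ell = 1$ for the $\varepsilon$-monotonicity goes through directly, since no auxiliary set arguments appear.

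However, the inductive step at $\ell \ge 2$ still has the gap you yourself flag, and the proposed resolution does not close it. You correctly note that $(\varepsilon^*,\cdot,E)$-goodness of the auxiliary sets $A_1,\ldots,A_{\ell-1}$ (available by the inductive hypothesis at lower levels) ensures that all \emph{inner} polls in the evaluation of $\hat{E}^{\mbar}_{\varepsilon^*}$ return truth values --- for instance, for each $a\in A$ the evaluation $\hat{E}^{\kk}_{\varepsilon^*}(\ldots,a,\ldots)$ is decided because $A_1$ is $(\varepsilon^*,\ell-1,E)$-good. What remains is the \emph{outer} poll over $a\in A$: one must show that a $(1-\varepsilon^*)$-fraction of $A$ agrees on the value $\hat{E}^{\kk}_{\varepsilon^*}(\ldots,a,\ldots)$. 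Your intended resolution says $A$'s hypothesis ``via the monotonicity observation, is used solely to guarantee that the outermost vote clears the relaxed threshold.'' But $A$'s hypothesis --- $(\varepsilon,\ell,E)$-goodness --- only asserts that the outer vote clears the $(1-\varepsilon)$-threshold (with $\varepsilon$-inner polls) when the $A_i$ are $(\varepsilon,\ell-i,E)$-good. When some $A_i$ is $(\varepsilon^*,\ell-i,E)$-good but not $(\varepsilon,\ell-i,E)$-good, the hypothesis tells you nothing; some inner polls $\hat{E}^{\kk}_{\varepsilon}(\ldots,a,\ldots)$ may return $\uparrow$, and the monotonicity observation cannot then be used to transfer a consensus from tolerance $\varepsilon$ to $\varepsilon^*$, because there is no $\varepsilon$-consensus to transfer. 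So the step where you invoke the hypothesis on $A$ is not actually applicable to the class of auxiliary tuples you must quantify over, and the inductive step is incomplete. Concretely: already at $\ell=2$ for a binary $E$, you would need to argue that $\hat{E}^{\langle 1,0\rangle}_{\varepsilon^*}(A,B)\in\{\top,\bot\}$ for every $(\varepsilon^*,1,E)$-good $B$ --- including those $B$ that fail to be $(\varepsilon,1,E)$-good --- and the $(\varepsilon,2,E)$-goodness of $A$ plus your two observations do not by themselves yield this.

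Note that the paper states the lemma without proof, so there is no paper argument to compare against; but as written, your proposal does not establish the statement, and the missing step is precisely the one you anticipated rather than a defect elsewhere.
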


\cref{Proposition: Order of unraveling doesn't matter for edge relation on sets} 
tells us that when we have 
$(\varepsilon, \ell, E)$-good sets
$A_0, \dots, A_{\ell-1}$, if $A_0, \dots, A_{\ell-1}$ are the only arguments of $\hat{E}^\mbar_\varepsilon$ coming from $\hat{M} \setminus M$, then $\hat{E}^\mbar_\varepsilon$ has a truth value that is independent of the ordering of $\mbar$.  
As a consequence, we obtain a key result, \cref{an-excellent-corollary}, which says  
that the truth value of any $\varepsilon$-partial relation whose arguments are all $\varepsilon$-excellent does not
depend on the order
in which this value is calculated.

\begin{proposition}
\label{Proposition: Order of unraveling doesn't matter for edge relation on sets}
Let 
	$E \in \Lang$ have arity $n$, and suppose that $0 < \varepsilon < \frac{1}{4}$ and $1\le \ell \le n$.  Let $A_0, \dots, A_{\ell-1}\in\hat{M}\setminus M$ be $(\varepsilon, \ell, E)$-good sets, and let $A_\ell, \dots, A_{n-1} \in M$. For any two injective functions $\beta_0, \beta_1\colon \{0, \dots, \ell-1\}\to \{0, \dots, \ell-1\}$ and any permutation $\sigma$ of $\{0, \dots, n-1\}$,
\[
\hat{E}^{\sigma \circ \beta_0}_{\varepsilon}(A_{\sigma(0)}, \dots, A_{\sigma(n-1)}) =
\hat{E}^{\sigma \circ \beta_1}_{\varepsilon}(A_{\sigma(0)}, \dots, A_{\sigma(n-1)}) \in \{\top, \bot\}.
\]
\end{proposition}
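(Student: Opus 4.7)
The plan is to proceed by induction on $\ell$. The base case $\ell=1$ is immediate: the only injection $\beta\colon\{0\}\to\{0\}$ is the identity, so $\beta_0=\beta_1$, and membership in $\{\top,\bot\}$ follows from the $(\varepsilon,1,E)$-goodness of $A_0$. For the inductive step, my first move is to reduce to adjacent swaps: since $\beta_0,\beta_1$ are bijections of $\{0,\ldots,\ell-1\}$, writing $\beta_1=\beta_0\circ\pi$ and decomposing $\pi$ into adjacent transpositions shows it suffices to treat the case where the tuples $\sigma\circ\beta_0$ and $\sigma\circ\beta_1$ differ by swapping the entries at two consecutive positions $a,a+1$ with $0\le a\le\ell-2$.

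If $a<\ell-2$, the outermost polling position is unchanged, say equal to some $m$. After fixing any $a^*\in A_m$, the two inner polling tuples of length $\ell-1$ are bijections sharing a common range, and all remaining set arguments are $(\varepsilon,\ell-1,E)$-good. The inductive hypothesis at level $\ell-1$ then gives equal truth values in $\{\top,\bot\}$ for each $a^*$, so the outer polling fractions coincide.

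The case $a=\ell-2$ is the main substantive step. Let $\kk$ denote the common inner prefix of length $\ell-2$, and let $i,j$ be the positions swapped, so the two polling tuples are $\kk\^i\^j$ and $\kk\^j\^i$, with $A_i,A_j$ the two outermost polled sets. For each $(a_i,a_j)\in A_i\times A_j$, let $g(a_i,a_j):=\hat{E}^{\kk}_\varepsilon$ evaluated with $a_i,a_j$ substituted in place of $A_i,A_j$; by the inductive hypothesis at level $\ell-2$, $g$ takes values in $\{\top,\bot\}$ and is independent of the ordering of $\kk$. Define $p(a_i):=|\{a_j:g(a_i,a_j)=\top\}|/|A_j|$ and $q(a_j):=|\{a_i:g(a_i,a_j)=\top\}|/|A_i|$. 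The $(\varepsilon,\ell-1,E)$-goodness of $A_j$ forces $\hat{E}^{\kk\^j}_\varepsilon(\ldots,a_i,\ldots,A_j,\ldots)\in\{\top,\bot\}$, whence $p(a_i)\in[0,\varepsilon)\cup(1-\varepsilon,1]$; symmetrically, the $(\varepsilon,\ell-1,E)$-goodness of $A_i$ gives $q(a_j)\in[0,\varepsilon)\cup(1-\varepsilon,1]$. Writing $U_\top:=\{a_i:p(a_i)>1-\varepsilon\}$ and $V_\top:=\{a_j:q(a_j)>1-\varepsilon\}$, we have $\hat{E}^{\kk\^j\^i}_\varepsilon=\top$ iff $|U_\top|/|A_i|>1-\varepsilon$ and $\hat{E}^{\kk\^i\^j}_\varepsilon=\top$ iff $|V_\top|/|A_j|>1-\varepsilon$, both lying in $\{\top,\bot\}$ by the $(\varepsilon,\ell,E)$-goodness hypothesis.

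The hard step is showing these two truth values agree. I will use a double count of $P:=|\{(a_i,a_j):g(a_i,a_j)=\top\}|$: assuming $\hat{E}^{\kk\^i\^j}_\varepsilon=\top$, counting via $V$ gives $P>(1-\varepsilon)^2|A_i||A_j|$, while counting via $U$ yields $P\le((1-\varepsilon)|U_\top|+\varepsilon|A_i|)|A_j|$. Combining these gives $|U_\top|/|A_i|>((1-\varepsilon)^2-\varepsilon)/(1-\varepsilon)$, and a short calculation shows this quantity exceeds $\varepsilon$ exactly when $1-4\varepsilon+2\varepsilon^2>0$, which holds for $\varepsilon<1/4$. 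The goodness dichotomy then promotes $|U_\top|/|A_i|>\varepsilon$ to $|U_\top|/|A_i|>1-\varepsilon$, yielding $\hat{E}^{\kk\^j\^i}_\varepsilon=\top$; the $\bot$ case is symmetric. The main obstacle is this final quantitative step: the Fubini-style bound alone falls short of $1-\varepsilon$, and we rely essentially on the goodness-induced dichotomy, together with the assumption $\varepsilon<1/4$, to upgrade it to the required threshold.
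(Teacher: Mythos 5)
Your proposal is correct and follows the same overall scheme as the paper: induction on $\ell$, reduction to transpositions, with the substantive case being a swap of the two outermost polls, handled by a double count over pairs in $A_i \times A_j$. You are a bit more careful than the paper about the reduction: the paper reduces to "a transposition" and then normalizes directly to the outermost swap (implicitly leaving transpositions that fix the outermost polling position to the induction), whereas you explicitly reduce to adjacent transpositions and dispatch the non-outermost ones by induction at level $\ell-1$. The core counting is closed slightly differently. The paper assumes for contradiction that the two poll orders disagree (one returns $\top$, the other $\bot$), bounds the number of $\bot$-pairs and the number of $\top$-pairs each by $(2\varepsilon-\varepsilon^2)|A_0||A_1|$, and observes that since every pair in $A_0\times A_1$ receives a truth value, this forces $2(2\varepsilon-\varepsilon^2)\ge 1$, impossible for $\varepsilon<\frac14$. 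Your version instead counts only the $\top$-pairs, extracts the lower bound $|U_\top|/|A_i| > \frac{(1-\varepsilon)^2-\varepsilon}{1-\varepsilon} > \varepsilon$, and then invokes the goodness-induced dichotomy $|U_\top|/|A_i|\in[0,\varepsilon)\cup(1-\varepsilon,1]$ to upgrade to $>1-\varepsilon$. Both routes lead to the same quadratic $2\varepsilon^2-4\varepsilon+1>0$ (threshold $\varepsilon<1-\tfrac{\sqrt2}{2}$, which $\varepsilon<\frac14$ comfortably satisfies); the paper's two-sided count is a touch shorter, while yours makes explicit where the goodness dichotomy enters.
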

\begin{proof}
Without loss of generality we may assume that $\sigma = \id$, as the proof of the general case is the same. 
	Our proof proceeds by induction on $\ell$.  \nl

\noindent \ul{\it Case $\ell = 1$:} \nl
	We have $\hat{E}^{\sigma \circ \beta_0}_{\varepsilon}(A_{\sigma(0)}, \dots, A_{\sigma(n-1)}) =
	  \hat{E}^{\sigma \circ \beta_1}_{\varepsilon}(A_{\sigma(0)}, \dots, A_{\sigma(n-1)})$
	because	$\beta_0 = \beta_1$, and these return a truth value by the definition of $(\varepsilon, 1, E)$-goodness. \nl\nl
\ul{\it Case $\ell> 1$:} \nl
As every permutation of $\{0, \dots, \ell-1\}$ is equal to a composition of transpositions, it suffices to prove the result when $\beta_0$ is a transposition of $\beta_1$. Therefore, we may assume without loss of generality that $\beta_0 = \<\ell-1, \dots, 2, 1, 0\>$ and $\beta_1 = \<\ell-1, \dots, 2, 0, 1\>$.

	Define 
\[ H^{1,0} \defas \hat{E}^{\beta_0}_{\varepsilon}(A_0, A_1, A_2, \dots, A_{n-1}) \]
and
\[ H^{0,1} \defas \hat{E}^{\beta_1}_{\varepsilon}(A_0, A_1, A_2, \dots, A_{n-1}). \]
Then our goal is to show 
	that $H^{1,0} = H^{0,1}$.
	First observe that they both have truth values because $A_0$ and $A_1$ are $(\varepsilon, \ell, E)$-good). We now show that they have the same truth value.

Suppose $H^{1,0} = \top$. Then there are at most
	\[
		(\varepsilon \cdot |A_0|) \cdot |A_1| + ((1- \varepsilon) |A_0|) \cdot (\varepsilon \cdot |A_1|)
		\]
	many pairs $(a, b) \in A_0 \times A_1$ such that $\hat{E}^{\<\ell-1, \dots, 2\>}_\varepsilon(a, b, A_2, \dots, A_{n-1}) = \bot$. 

Similarly, if $H^{0,1} = \bot$, then there are at most 
\[
(\varepsilon \cdot |A_1|) \cdot |A_0| + ((1- \varepsilon) \cdot |A_1|) \cdot (\varepsilon \cdot |A_0|)
\]
many pairs $(a, b) \in A_0 \times A_1$ such that $\hat{E}^{\<\ell-1, \dots, 2\>}_\varepsilon(a, b, A_2, \dots, A_{n-1}) = \top$. 

Hence if 
	\begin{align*}
		|A_0||A_1| &> (\varepsilon \cdot |A_0|) \cdot |A_1| 
		+ ((1- \varepsilon) |A_0|) \cdot (\varepsilon \cdot |A_1|) + \\
		& \hspace*{15pt} (\varepsilon \cdot |A_1|) \cdot |A_0| + ((1- \varepsilon) |A_1|) \cdot (\varepsilon \cdot |A_0|) \\
		& = 2(2\varepsilon- \varepsilon^2) |A_0||A_1|, 
	\end{align*}
then $H^{1,0} = \top$ and $H^{0,1} = \bot$ cannot both hold simultaneously. 

A similar calculation shows that if 
\[
|A_0||A_1| > 2(2\varepsilon- \varepsilon^2) |A_0||A_1| \\
\]
then $H^{1,0} = \bot$ and $H^{0,1} = \top$ cannot both hold simultaneously. 

Now, $\varepsilon < \frac{1}{4}$, and so 
$2(2\varepsilon- \varepsilon^2)  < 1$. Hence 
	$H^{1,0} = H^{0,1}$, and
	the result follows.
\end{proof}

From now on we will assume that $\varepsilon < \frac{1}{4}$. 

Let $\ell \ge 1$ and 
suppose $A_0, \dots, A_{n-1}\in \hat{M}$ are such that exactly $\ell$ are $(\varepsilon, \ell, E)$-good and exactly $n-\ell$ are in $M$, where $n = \arity(E)$.
(In particular, this occurs when each of $A_0, \dots, A_{n-1}$ is $\varepsilon$-excellent.)
Then by
\cref{Proposition: Order of unraveling doesn't matter for edge relation on sets}, 
$\hat{E}_\varepsilon^\mbar(A_0, \dots, A_{n-1})$ has a truth value that is independent of the $\ell$-tuple $\mbar$. In this case, we refer to $\hat{E}_\varepsilon^\mbar$ simply as $\hat{E}_{\varepsilon}$. 
This gives the following corollary.

\begin{corollary}
	\label{an-excellent-corollary}
	For any $\varepsilon$-excellent elements $A_0, \dots, A_{n-1} \in \hat{M}\setminus M$, and any $E \in \Lang$ of arity $n$, we have $\hat{E}_{\varepsilon}(A_0, \dots, A_{n-1}) \in \{\top, \bot\}$.
\end{corollary}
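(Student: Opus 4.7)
The plan is to reduce this directly to \cref{Proposition: Order of unraveling doesn't matter for edge relation on sets} applied in the extremal case where every argument is a set (i.e., $\ell = n$). First, I would observe that the hypothesis supplies us with exactly what we need to invoke the proposition: by definition, $\varepsilon$-excellence of $A_i$ means that $A_i$ is $(\varepsilon, \arity(E'), E')$-good for every relation symbol $E' \in \Lang$; in particular, taking $E' = E$, each $A_i$ is $(\varepsilon, n, E)$-good. So with $\ell \defas n$, the sets $A_0, \dots, A_{n-1}$ satisfy the hypothesis of \cref{Proposition: Order of unraveling doesn't matter for edge relation on sets} (the ``tail'' of arguments from $M$ is empty).

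Next, I would choose any permutation $\mbar$ of $\{0, \dots, n-1\}$ --- viewed as an injective function $\beta\colon \{0, \dots, n-1\} \to \{0, \dots, n-1\}$ --- and set $\sigma \defas \id$. The proposition, applied with $\beta_0 = \beta_1 = \beta$, yields
\[
\hat{E}^{\mbar}_{\varepsilon}(A_0, \dots, A_{n-1}) \in \{\top, \bot\},
\]
which is already the desired conclusion. Moreover, taking $\beta_0$ and $\beta_1$ to be arbitrary permutations shows that this truth value does not depend on $\mbar$, which is what justifies suppressing the superscript and writing the common value as $\hat{E}_{\varepsilon}(A_0, \dots, A_{n-1})$, as remarked in the paragraph just before the corollary.

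There is no real obstacle here: all of the combinatorial work --- the consensus-counting argument bounding the ``bad'' pairs by $2(2\varepsilon - \varepsilon^2)|A_0||A_1| < |A_0||A_1|$, and the induction on $\ell$ --- was already carried out in the proof of \cref{Proposition: Order of unraveling doesn't matter for edge relation on sets}. The corollary is essentially a packaging of that proposition in the special case where the inductive notion of goodness has been stratified up to the full arity of $E$. The only thing to verify is that the definition of $\varepsilon$-excellence is strong enough to feed the proposition, which it is by design.
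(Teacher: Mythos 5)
Your proof is correct and takes essentially the same route as the paper: the paper's paragraph immediately preceding the corollary derives it by noting that $\varepsilon$-excellence gives $(\varepsilon, n, E)$-goodness of each $A_i$, then invokes \cref{Proposition: Order of unraveling doesn't matter for edge relation on sets} with $\ell = n$ (so the tail of arguments from $M$ is empty) to get a truth value independent of the ordering $\mbar$, which is exactly your argument. You have simply made explicit the instantiation (choosing $\sigma = \id$ and $\beta_0 = \beta_1$) that the paper leaves implicit.
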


The following technical lemma tells us that, for a relation $E$ and appropriately good sets, at most a small fraction of the tuples consistent with those sets disagree 
with the partial relation $\hat{E}^\mbar_\varepsilon$ 
about the truth value of $E$.

\begin{lemma}
\label{Lemma: Number of mistakes in approximate truth}
	Let $E \in \Lang$ have arity $n$ and suppose that $0 < \varepsilon < \frac{1}{4}$ and $1\le \ell \le n$.
	Let $A_0, \dots, A_{\ell-1}\in\hat{M}\setminus M$ be such that 
	$A_i$ is $(\varepsilon, \ell-i, E)$-good
	for $0 \leq i < \ell$,
	and let $A_\ell, \dots, A_{n-1} \in M$.
	Let $\sigma$ be a permutation of $\{0, \dots, n-1\}$. 
Define	
	\begin{align*}
		Z \defas \{(a_0, \dots, a_{n-1}) \st ~~& a_i \in A_{\sigma^{-1}(i)}\mathrm{~when~}
		\sigma^{-1}(i) < \ell, \\
		\mathrm{~and~} &a_i = A_{\sigma^{-1}(i)}\mathrm{~when~}
		\sigma^{-1}(i) \ge \ell\}.
	\end{align*}
Then the following hold.
\begin{itemize}
	\item If $\hat{E}^{\<\sigma^{-1}(\ell-1),\dots,\sigma^{-1}(1), \sigma^{-1}(0)\>}_{\varepsilon}(A_{\sigma(0)}, A_{\sigma(1)}, \dots, A_{\sigma(n-1)}) = \top$ then 
\[
\bigl|\{(a_0, a_1, \dots, a_{n-1}) \in Z\st \M \models \neg E(a_0, a_1, \dots, a_{n-1})\}\bigr| \,\leq\,  \ell \cdot \varepsilon \cdot \prod_{0 \leq i < \ell} |A_i|.
\]

\item If $\hat{E}^{\<\sigma^{-1}(\ell-1),\dots,\sigma^{-1}(1), \sigma^{-1}(0)\>}_{\varepsilon}(A_{\sigma(0)}, A_{\sigma(1)}, \dots, A_{\sigma(n-1)}) = \bot$ then 
\[
\bigl|\{(a_0, a_1 \dots, a_{n-1}) \in Z\st \M \models E(a_0, a_1, \dots, a_{n-1})\}\bigr|
		\,\leq\,  \ell \cdot \varepsilon \cdot \prod_{0 \leq i < \ell} |A_i|.
\]
\end{itemize}
\end{lemma}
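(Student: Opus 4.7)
The plan is to induct on $\ell$, mirroring the recursive definition of $\hat{E}^\mbar_\varepsilon$; only the $\top$ case needs a proof, as the $\bot$ case follows by an identical argument with the roles of $\top$ and $\bot$ exchanged.

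For the base case $\ell = 1$, only $A_0$ is set-valued and every tuple in $Z$ is specified by a single coordinate $a \in A_0$, the remaining positions being filled by the fixed elements $A_\ell,\dots,A_{n-1}$ as dictated by $\sigma$. The defining clause for $\hat{E}^{\<\sigma^{-1}(0)\>}_\varepsilon = \top$ says that strictly more than $(1-\varepsilon)|A_0|$ of the $a \in A_0$ yield $\M \models E(\dots)$, so at most $\varepsilon |A_0| = 1 \cdot \varepsilon \cdot |A_0|$ tuples disagree, matching the required bound.

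For the inductive step, write $\mbar = \kk \^ \sigma^{-1}(0)$ and assume the lemma holds at $\ell - 1$. From the definition, if $\hat{E}^\mbar_\varepsilon = \top$ then
\[
G \defas \{a \in A_0 \st \hat{E}^\kk_\varepsilon(\dots, a, \dots) = \top\},
\]
where $a$ is substituted at position $\sigma^{-1}(0)$, satisfies $|G| > (1-\varepsilon)|A_0|$, so $|A_0 \setminus G| < \varepsilon |A_0|$. Partition the tuples of $Z$ by their value at position $\sigma^{-1}(0)$. For each $a \in G$, the remaining sets $A_1,\dots,A_{\ell-1}$ (with $A_i$ being $(\varepsilon,\ell-i,E)$-good) satisfy the goodness hypothesis required by the lemma at $\ell-1$ after reindexing $A_i$ as the $(i-1)$st set and restricting $\sigma$ to the surviving positions: the goodness level $\ell - i = (\ell - 1) - (i - 1)$ is exactly what is needed. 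The inductive hypothesis bounds the number of disagreeing tuples for each such $a$ by $(\ell-1)\varepsilon \prod_{1 \le i < \ell}|A_i|$; summing over $|G| \le |A_0|$ gives at most $(\ell-1)\varepsilon \prod_{0 \le i < \ell}|A_i|$ disagreements. For $a \in A_0 \setminus G$, use the trivial bound of $\prod_{1 \le i < \ell}|A_i|$ disagreements per $a$, together with $|A_0 \setminus G| < \varepsilon |A_0|$, to obtain at most $\varepsilon \prod_{0 \le i < \ell}|A_i|$ additional disagreements. Adding the two contributions yields the required total of $\ell \varepsilon \prod_{0 \le i < \ell}|A_i|$.

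The only subtlety is the reindexing: one must check that the hypothesis ``$A_i$ is $(\varepsilon, \ell-i, E)$-good'' continues to supply exactly the goodness conditions of the $(\ell-1)$-instance of the lemma once $A_0$ has been substituted by an element. This is immediate from the arithmetic $\ell - i = (\ell-1) - (i-1)$, so the main work of writing the full proof is really just bookkeeping of permutations and goodness indices rather than any new combinatorial idea; the lemma is designed to be invariant under precisely this kind of reindexing, which is why it is stated with a general permutation $\sigma$.
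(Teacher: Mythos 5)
Your proof is correct, and it reaches the bound by a different organization of the argument than the paper's. The paper works bottom-up: it recursively defines a chain of relations $F = F^0 \subseteq F^1 \subseteq \cdots \subseteq F^{\ell-1}$, together with explicit ``bad'' sets $B_m^{\overline{c}}$ and $C_m^{\overline{c}}$ at each stage, carries two inductive invariants $(1_m)$, $(2_m)$ along the whole chain, and at the end bounds $\bigl|F^{\ell-1}\setminus F\bigr|$ as a union of the $C_m^{\overline{c}}$, each contributing at most $\varepsilon\prod_{i<\ell}|A_i|$. You instead peel off the outermost poll (over $A_0$) and invoke the lemma recursively at level $\ell-1$ on the remaining sets $A_1,\dots,A_{\ell-1}$, which after the index shift $i\mapsto i-1$ satisfy exactly the right goodness hypotheses ($\ell-i = (\ell-1)-(i-1)$). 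Both arguments boil down to the same count --- one $\varepsilon$-fraction of failures per level, for $\ell$ levels, each contributing at most $\varepsilon\prod_{i<\ell}|A_i|$ bad tuples --- but yours is a genuine top-down recursion on $\ell$ rather than a forward construction with maintained invariants, and it is shorter because it uses the lemma as its own inductive hypothesis instead of rebuilding the intermediate bookkeeping relations. One thing your write-up makes usefully explicit that the paper's reduction to $\sigma=\mathrm{id}$ hides is \emph{why} the lemma is stated for a general permutation $\sigma$: after substituting an element of $A_0$ into position $\sigma^{-1}(0)$, one needs the freedom to choose a new permutation $\sigma'$ (with $(\sigma')^{-1}(i') = \sigma^{-1}(i'+1)$ on $\{0,\dots,\ell-2\}$) so that the superscript of the inner $\varepsilon$-partial relation is exactly the one appearing in the $(\ell-1)$-instance; a careful final version should spell out this choice of $\sigma'$ and verify that the argument tuple matches, but the reindexing does indeed go through as you claim.
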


\begin{proof}
The proofs of the two bullet points are essentially identical so we will only prove the first. Further we can assume without loss of generality that $\sigma = \id$. 
	To simplify notation we will omit the superscript of the partial relation and refer to $\hat{E}^{\<\ell-1,\dots,0\>}_\varepsilon$ by $\hat{E}_\varepsilon$.

	Define the $\ell$-ary relation
	$F(x_0, \dots, x_{\ell-1}) \defas E(x_0, \dots, x_{\ell-1}, A_{\ell}, \dots, A_{n-1})$. 
	Note that as $A_i$ is $(\varepsilon, \ell-i, E)$-good, $A_i$ is also $(\varepsilon, \ell-i, F)$-good.

	At stage $m < \ell$,
	we recursively define
	an $\ell$-ary relation $F^m$ on $M$ and 
	for every $\c \in \prod_{i < m}A_i$,
	a unary relation $B_m^\c$ and 
	$\ell$-ary relation $C_m^\c$ on $M$,
	such that the following two inductive hypotheses hold. First,
\begin{itemize}
\item[($1_m$)] 
	whenever $\<a_0, \ldots, a_{\ell -1}\> \in \prod_{i< \ell}A_i$ 
		and $a_j \in B_j^{\<a_0, \dots, a_{j-1}\>}$ for some $j \leq m$ then 
		\[
			F^m(a_0, \dots, a_{\ell-1}) = \top,
			\]
\end{itemize}
		and second,
\begin{itemize}
\item[($2_m$)] 
	whenever $\<a_0, \ldots, a_{\ell-1}\> \in \prod_{i<\ell} A_i$ 
		and $a_j \not\in B_j^{\<a_0, \dots, a_{j-1}\>}$ for all $j \leq m$ then 
		\[
			\hat{F}_{\varepsilon}(a_0, \dots,a_{m}, A_{m+1}, \dots,  A_{\ell-1}) = \top.
			\] 
\end{itemize}
	Further, we will have $F \subseteq F^0 \subseteq F^1 \subseteq \cdots \subseteq F^{\ell-1}$.

\noindent \ul{Stage $0$:}\nl
	Let 
	\[
		B_0^\emptyset \defas \{c_0 \in A_0\st \hat{F}_{\varepsilon}(c_0, A_{1}, \dots, A_{\ell- 1})  = \bot\},
		\]
	and let
	\[
		C_0^\emptyset \defas 
		 B_0^\emptyset \times \prod_{0 < i < \ell} A_i.
	\]

	Then define $F^0 \defas F \cup C_0^\emptyset$ (where we consider the relation $F$ as a subset of $M^\ell$). 
	Condition $(1_0)$ holds because $C_0^\emptyset \subseteq F^0$.

	Because $A_i$ is $(\ell-i)$-good for $1 \le i < \ell$,  whenever $a_0 \not \in B_0^\emptyset$ we have 
	$\hat{F}_{\varepsilon}(a_0, A_{1}, \dots,  A_{\ell-1}) = \top$,
	and so condition $(2_0)$ holds.

\noindent \ul{Stage $k$, where $1\le  k < \ell$:}\nl
	Suppose that for $j<k$
	the relations $F^j$, and 
	for $\d \in \prod_{i < j}A_j$
	the relation $B_j^\d$, satisfy conditions $(1_{j})$ and $(2_{j})$.
	We now show how to appropriately define $F^{k}$, $B_{k}^\c$, and $C_{k}^\c$ for parameters $\c\in \prod_{i < k}A_i$ of length $k$.

	Suppose 
	$\<c_0, \dots, c_{k-1}\> \in \prod_{i < k}A_i$.
	If 
	$c_j \in B_j^{\<c_0, \dots, c_{j-1}\>}$ for some $j < k$, 
	then let $B_{k}^{\<c_0, \dots, c_{k-1}\>} \defas \emptyset$ and $C_{k}^{\<c_0, \dots, c_{k-1}\>} = \emptyset$. 
	Otherwise, 
	$c_j \not\in B_j^{\<c_0, \dots, c_{j-1}\>}$ for all $j < k$,
	in which case we
	define
	\[
		B_{k}^{\<c_0, \dots, c_{k-1}\>} \defas \bigl\{c_{k}\in A_{k} \st \hat{F}_\varepsilon(c_0, \dots, c_{k-1}, c_{k}, A_{k+1}, \dots, A_{\ell-1}) = \bot\bigr\}.
	\]
and 
\begin{eqnarray*}
	C_{k}^{\<c_0, \dots, c_{k-1}\>} \defas 
	\bigl\{\<c_0, \dots, c_{k-1}\>\bigr\}
	\times 
		 B_{k}^{\<c_0, \dots, c_{k-1}\>} \times \prod_{k+1 \le i < \ell} A_i.
\end{eqnarray*}
	Finally, define 
	\[
		F^{k} \defas F^{k-1} \cup \bigcup_{\<c_0, \dots,c_{k-1}\> \in \prod_{i < k} A_i} \bigl \{C_{k}^{\<c_0, \dots,c_{k-1}\>}\st
			\hat{F}_{\varepsilon}(c_0, \dots,c_{k-1}, A_{k}, \dots,  A_{\ell-1}) = \top \bigr\}.
			\]

			We now show that condition $(1_{k})$ holds.
Let $\<a_0, \dots, a_{\ell-1}\>\in \prod_{i < \ell} A_i$, and suppose
$a_j \in B_j^{\<a_0, \dots, a_{j-1}\>}$ for some $j \le k$. 
If $a_j \in B_j^{\<a_0, \dots, a_{j-1}\>}$ for some $j < k$ then 
		by condition $(1_{j})$ we have
			$F^j(a_0, \dots, a_{\ell-1}) = \top$. Hence
			$F^k(a_0, \dots, a_{\ell-1}) = \top$ also as $F^j \subseteq F^k$.
Otherwise, we have
(i)		$a_j \not \in B_j^{\<a_0, \dots, a_{j-1}\>}$ for all $j < k$ 
		and (ii) $a_k \in B_k^{\<a_0, \dots, a_{k-1}\>}$.
		By (ii), we have $\<a_0, \dots,a_{\ell-1}\> \in  C_{k}^{\<a_0, \dots,a_{k-1}\>}$.
		By (i) and condition $(2_{k-1})$ we have
		\[
			\hat{F}_{\varepsilon}(a_0, \dots,a_{k-1}, A_{k}, \dots,  A_{\ell-1}) = \top,
			\] 
			and so 
		$C_{k}^{\<a_0, \dots,a_{k-1}\>}\subseteq F^k$.
			Therefore $F^k(a_0, \dots, a_{\ell-1}) = \top$.

			Towards showing condition $(2_k)$, again let $\<a_0, \dots, a_{\ell-1}\>\in \prod_{i < \ell} A_i$
			and suppose that $a_j \not \in B_j^{\<a_0, \dots, a_{j-1}\>}$ for all $j \le k$.
			By the definition of $B_k^{\<a_0, \dots, a_{k-1}\>}$, and because each $A_i$ is $(\ell-i)$-good for $k+1 \le i < \ell$, 
			we have
			\[
				\hat{F}_{\varepsilon}(a_0, \dots,a_{k}, A_{k+1}, \dots,  A_{\ell-1}) = \top.
				\]

\vspace*{10pt}

To conclude the proof, consider the relation $F^{\ell-1}$. By our assumption in the first bullet point,
we have
$F^{\ell-1}(a_0, \dots, a_{\ell-1}) = \top$ for all $(a_0, \dots, a_{\ell-1}) \in \prod_{j < \ell} A_j$. In other words, $\prod_{j < \ell} A_j \subseteq F^{\ell-1}$.
Because the last $n-\ell$ terms of a tuple in $Z$ are fixed, we have
\[
\bigl|\bigl\{(a_0, \dots, a_{n-1}) \in Z\st \M \models \neg E(a_0, \dots, a_{n-1})\bigr\}\bigr| \, \leq \, \bigl|F^{\ell-1}\setminus F\bigr|.
\]
By the definitions of $F_j$ for $j<\ell$, we have
\[
	F^{\ell-1} \setminus F \subseteq \bigcup_{j < \ell} \bigcup \bigl\{C_{j}^{\a}\st \a \in \prod_{i < j}A_i\bigr\}.
	\]

As each $A_j$ is $(\varepsilon, \ell-j, E)$-good, for 
each $\a \in \prod_{i < j}A_i$ we have
\[
|C_{j}^{\a}| \leq \varepsilon \cdot \prod_{j \leq i < \ell} |A_i|,
\]
and so 
\[
\Bigl|\bigcup \bigl\{C_{j}^{\a}\st \a \in \prod_{i < j}A_i\bigr\}\Bigr| \leq  \varepsilon \cdot \prod_{i < \ell} |A_i|. 
\]
But then $|F^{\ell-1} \setminus F| \leq \ell \cdot \varepsilon \cdot \prod_{i < \ell} |A_i|$, as desired.  
\end{proof}

As a consequence of \cref{Lemma: Number of mistakes in approximate truth}, 
we show in \cref{Changes in edges due to excellence needed to get indivisible} that
given a partition of $\M$
into $\varepsilon$-excellent parts, we can assign 
a consensus truth value
to any relation $E$ and $\arity(E)$-tuple of parts of the partition.
This produces a partition that is almost indivisible (with respect to $\M$) in the following sense.

\begin{proposition}
\label{Changes in edges due to excellence needed to get indivisible}
	Let $P$ be an equitable partition of $\M$
	such that each part of $P$ is $\varepsilon$-excellent, and let $E \in \Lang$ of arity $n$.
	Then there is an $n$-ary relation $E^*$ on $M$ such that
	for all tuples $\<p_i\>_{i < n}$ from $P$,
		\[
			\textstyle
			\left|(E \Delta E^*) \cap \prod_{i < n} p_i\right| \leq n \cdot \varepsilon \cdot \prod_{i < n}|p_i|,
			\]
		and	$P$ is an indivisible partition of the structure $(M, E^*)$ with underlying set $M$ and the relation $E^*$. 
\end{proposition}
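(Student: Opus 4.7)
The plan is direct: define $E^*$ so that on each ``box'' $\prod_{i<n} p_i$ (with $(p_0,\dots,p_{n-1}) \in P^n$) it takes a single consensus truth value, namely the value assigned by the augmented relation $\hat{E}_\varepsilon(p_0,\dots,p_{n-1})$; then read off indivisibility from the definition and the mistake bound from \cref{Lemma: Number of mistakes in approximate truth}.

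More concretely, since each part of $P$ is $\varepsilon$-excellent, \cref{an-excellent-corollary} guarantees that for every tuple $(p_0,\dots,p_{n-1}) \in P^n$ the value $\hat{E}_\varepsilon(p_0,\dots,p_{n-1}) \in \{\top,\bot\}$ is well-defined and independent of the order in which the arguments are polled. I would set
\[
   E^*(a_0,\dots,a_{n-1}) = \top \ \iff\ \hat{E}_\varepsilon(p_0,\dots,p_{n-1}) = \top,
\]
where $p_i$ is the unique part of $P$ containing $a_i$. The boxes $\prod_{i<n} p_i$, as $(p_0,\dots,p_{n-1})$ ranges over $P^n$, partition $M^n$, so $E^*$ is well-defined. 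By construction $E^*$ is constant on each such box, which is precisely the indivisibility of $P$ with respect to the structure $(M,E^*)$.

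For the mistake bound, fix any $(p_0,\dots,p_{n-1}) \in P^n$. Because each $p_i$ is $\varepsilon$-excellent, it is $(\varepsilon,\arity(E),E)$-good, and hence (by the monotonicity lemma on goodness) also $(\varepsilon,n-i,E)$-good. So the hypotheses of \cref{Lemma: Number of mistakes in approximate truth} are met with $\ell = n$, $\sigma = \id$, and $A_i = p_i$; the set $Z$ appearing there is exactly $\prod_{i<n} p_i$. Whichever of $\top$ or $\bot$ equals $\hat{E}_\varepsilon(p_0,\dots,p_{n-1})$, the relevant bullet of the lemma tells us that the number of tuples in $\prod_{i<n} p_i$ on which $\M \models E$ disagrees with this value is at most $n \cdot \varepsilon \cdot \prod_{i<n}|p_i|$. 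Since $E^*$ equals that value throughout the box, this is precisely the cardinality of $(E \triangle E^*) \cap \prod_{i<n} p_i$, yielding the claimed bound.

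There is no real obstacle here; the only subtlety to double-check is that \cref{Lemma: Number of mistakes in approximate truth} applies uniformly to \emph{every} $n$-tuple of parts, including those with repetitions, but this is fine since the lemma places no distinctness hypothesis on the sets $A_0,\dots,A_{\ell-1}$, and $\varepsilon$-excellence of each part supplies the required goodness regardless of repetitions. Equitability of $P$, incidentally, plays no role in this particular proposition.
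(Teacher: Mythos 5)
Your proposal is correct and follows essentially the same route as the paper: define $E^*$ to be constant on each box $\prod_{i<n} p_i$ according to the consensus value $\hat{E}_\varepsilon(p_0,\dots,p_{n-1})$, read off indivisibility directly, and bound $|(E\triangle E^*)\cap\prod_{i<n} p_i|$ via \cref{Lemma: Number of mistakes in approximate truth} with $\ell = n$. You are a bit more explicit than the paper about invoking the monotonicity of goodness and noting that repetitions among the $p_i$ cause no issue, both of which are correct observations that do not change the substance of the argument.
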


\begin{proof}
	If $p_0, \dots, p_{n-1} \in P$,
	then $\hat{E}_{\varepsilon}(p_0, \dots, p_{n-1})$ has a truth value,
	because each 
	part of $P$ is $\varepsilon$-excellent; further, if $\hat{E}_{\varepsilon}(p_0, \dots, p_{n-1}) = \top$ then
\[
	\textstyle
\left|\prod_{i < n}p_i \setminus 
	E
	\right| \leq n \cdot \varepsilon \cdot \prod_{i < n}|p_i|
\]
	holds by \cref{Lemma: Number of mistakes in approximate truth}, and analogously when $\hat{E}_{\varepsilon}(p_0, \dots, p_{n-1}) = \bot$.

Now let $E^*\subseteq M^n$ be such that for any $p_0, \dots, p_{n-1} \in P$,
	if $\hat{E}_{\varepsilon}(p_0, \dots, p_{n-1}) = \top$ then $E^* \cap \prod_{i < n} p_i = \prod_{i < n}p_i$, and if $\hat{E}_{\varepsilon}(p_0, \dots, p_{n-1}) = \bot$ then $E^* \cap \prod_{i < n} p_i = \emptyset$. It is then clear that $(M, E^*)$ is indivisible. 
\end{proof} 

Applying 
\cref{Changes in edges due to excellence needed to get indivisible} to each relation $E\in\Lang$, in aggregate we obtain an
$\Lang$-structure $(M, E^*)_{E\in\Lang}$
that is indivisible.
Because $(M, E^*)_{E\in\Lang}$ is obtained from $\M$ by a small number of modifications of each $E\in\Lang$ to obtain the corresponding $E^*$, we may think of $\M$ itself as almost indivisible.

\section{Obtaining excellent sets}
In this section, we will show how to use the fact that a finite $\Lang$-structure $\M$ with underlying set $M$ has the non-\brap-branching property to get large excellent sets. Specifically, we start with a set $A$ and try and build a binary-branching tree of subsets of $A$, where the set at a child node has size at least $\varepsilon$ times the size of the set at the parent node, and where the sets at any two children disagree on some ``question'' that excellent sets ``decide''. If this process of building a tree terminates, then there must be some set which we could not divide into two pieces each of size an $\varepsilon$ fraction of the set, each of which gives a different answer to a question that $\varepsilon$-sets can answer. Hence we will deduce that such a set must itself be $\varepsilon$-excellent. We will then show that such a tree must have a height bounded by a term definable from $\brap$, which will give us a bound on how large (as a fraction of our original set) an $\varepsilon$-excellent set we can find. 

In addition, when such a tree branches we will further require the subsets at the children nodes to be not merely ``sufficiently large'', but also one of a given predetermined set of sizes. In this way we will ensure that the sizes of all $\varepsilon$-excellent sets we create have a large greatest common divisor. This will be important when, in \cref{Section: Equitable Partitions of Excellent Sets}, we wish to divide our partition of $\varepsilon$-excellent sets
into an equitable partition of $\varepsilon$-excellent sets. 

\begin{definition}
	A \defn{rock} is a tuple $\<A, Q, \ell, (B^0, \ldots, B^{\ell - 1}, B^{\ell + 1}, \ldots, B^{\arity(Q)-1}), \beta\>$, where
	\begin{itemize}
\item $A\in \Powerset(M)\setminus \emptyset$,

\item $Q$ is a relation symbol in $\Lang$,

\item $\ell \in \Nats$ such that $\ell < \arity(Q)$,

\item each $B^t \in \Powerset(M)\setminus \emptyset$, and

\item $\beta\colon \{1, \dots, \arity(Q)-1\} \to \{0, \dots, \arity(Q)-1\}\setminus \{\ell_i\}$ is an injection (and hence a bijection).
\end{itemize}
	We say that such a rock \defn{covers} the set $A$.
\end{definition}
\begin{definition}
	Let $k \in \Nats$
and $\varepsilon > 0$. 
	A finite tuple $\<m_j\>_{j \leq k}$ of positive integers is a \defn{staircase} if $\frac{m_{j+1}}{m_{j}}\leq \varepsilon$ for all $j < k$.
\end{definition}

\begin{definition}
\label{Definition: Mesa}
	Let $k \in \Nats$
and $\varepsilon > 0$, and suppose 
	$\mbf{m}\defas \<m_j\>_{j \leq k}$ is a staircase.
	Define an \defn{$(\varepsilon, \mbf{m})$-mesa} of height $k$ to consist of a tree of rocks
	\[
		\bigl\<\bigl(A_i, Q_i, \ell_i, (B^0_i, \ldots, B^{\ell - 1}_i, B^{\ell + 1}_i, \ldots, B^{\arity(Q)-1}_i), \beta_i\bigr)\bigr\>_{i\in \{0, 1\}^{<k}}
		\]
		along with a collection of sets (called \defn{pre-caps}) $\<A_i\>_{i \in \{0, 1\}^k}$ indexed by the children of the leaves,
that satisfy,
	for each $i \in \{0, 1\}^{<k}$,
\begin{itemize}
\item 
	$B_i^j$ is $(\varepsilon, \arity(Q_i) - \beta_i^{-1}(j), Q_i)$-good
	for each $j \in \{0, \dots, \arity(Q_i)-1\} \setminus \{\ell_i\}$.

		\vspace*{7pt}
\item  
	$|A_{i\^s}| \in\mbf{m}$ and
		$|A_{i\^s}| \geq |\varepsilon| \cdot |A_i|$
	for each $s \in \{0,1\}$.

		\vspace*{7pt}
\item 
		$\bigl(\hat{Q_i}\bigr)^{\<\beta_i(\arity(Q_i)-1), \ldots, \beta_i(1)\>}_{\varepsilon}(B_i^0, \dots, B_i^{\ell_i - 1}, a, B_i^{\ell_i + 1}, \dots, B_i^{\arity(Q_i)-1}) = \bot$
	for all $a \in A_{i\^0}$.
		\vspace*{7pt}
\item 
	$\bigl(\hat{Q_i}\bigr)^{\<\beta_i(\arity(Q_i)-1), \ldots, \beta_i(1)\>}_{\varepsilon}(B_i^0, \dots, B_i^{\ell_i - 1}, a, B_i^{\ell_i + 1}, \dots, B_i^{\arity(Q_i)-1}) = \top$
	for all $a \in A_{i\^1}$.
\end{itemize}

	Consider an $(\varepsilon, \mbf{m})$-mesa as above, suppose $m_{k+1}$ is such that $\frac{m_{k+1}}{m_k} \leq \varepsilon$, and let
	$A_p$ be a pre-cap such that 
	$\varepsilon \cdot |A_p| \le  m_{k+1}$. 
	Then $A_p$ is an \defn{$m_{k+1}$-cap} if
	there is no rock 
	$\<A_p, Q, \ell, (B^0, \ldots, B^{\ell - 1}, B^{\ell + 1}, \ldots, B^{\arity(Q)-1}), \beta\>$
	covering it 
	such that 
\[
	m_{k+1} \leq
\{a \in A_p \st \hat{Q}^{\<\beta(\arity(Q)-1), \ldots, \beta(1)\>}_{\varepsilon}(B^0, \dots, B^{\ell - 1}, a, B^{\ell + 1}, \dots B^{\arity(Q)-1}) = \bot\} 
\]
and 
\[
	m_{k+1} \leq
\{a \in A_p \st \hat{Q}^{\<\beta(\arity(Q)-1), \ldots, \beta(1)\>}_{\varepsilon}(B^0, \dots, B^{\ell - 1}, a, B^{\ell + 1}, \dots B^{\arity(Q)-1}) = \top\}.
\]
A \defn{cap} of an $(\varepsilon, \mbf{m})$-mesa is an $m_{k+1}$-cap of the mesa for some $m_{k+1} \le \varepsilon m_k$.

An $(\varepsilon, \mbf{m})$-mesa has \defn{constant location $\ell$} if $\ell_i = \ell$ for all $i \in \{0, 1\}^{<k}$, and has \defn{constant relation $Q$} if $Q_i = Q$ for all $i \in \{0, 1\}^{<k}$.  

Let $Y$ be an
	$(\varepsilon, \mbf{m})$-mesa, and 
	suppose $\mbf{m'}$ has $\mbf{m}$ as an initial segment.
Then 
an $(\varepsilon,\mbf{m'})$-mesa 
	$Z$ is an \defn{extension} of $Y$ if (i) $Z$ extends $Y$ (as a tree of rocks), and 
	(ii) $Z$ at the level after the height of $Y$ contains,
	for each pre-cap of $Y$, 
	a rock that covers that pre-cap.

Suppose $m_{k+1}$ is such that $\frac{m_{k+1}}{m_k} \leq \varepsilon$. An $(\varepsilon, \mbf{m})$-mesa is \defn{$m_{k+1}$-maximal} if it has no extensions which are $(\varepsilon, \mbf{m}\^m_{k+1})$-mesas.
\end{definition}

Note that if $C$ is the cap of a mesa, then every rock covering $C$ determines the truth value of its relation symbol (with its arguments and its ordering), in the sense that there is only one truth value that a large fraction of $C$ agrees with.

\begin{lemma}
\label{Lemma: Bounds on size of leaves of mesas}
Let $Y$ be 
	an $(\varepsilon, \mbf{m})$-mesa
	with notation as in \cref{Definition: Mesa}.
Let $m_{k+1} \leq \varepsilon m_k$, and suppose that $Y$ is $m_{k+1}$-maximal.
\begin{itemize}
	\item[(a)] Let $p \in \{0, 1\}^k$. If the pre-cap $A_p$ is an $m_{k+1}$-cap of $Y$, then $A_p$ is $\varepsilon$-excellent. 

	\item[(b)] There is a (not necessarily unique) $m_{k+1}$-cap of $Y$.
\end{itemize}
\end{lemma}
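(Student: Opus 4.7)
The plan is to prove (a) by contraposition and (b) directly from $m_{k+1}$-maximality. For (a), assuming $A_p$ is not $\varepsilon$-excellent, I will extract an explicit $\uparrow$-witness and repackage it as a rock covering $A_p$ whose polling splits $A_p$ into $\top$- and $\bot$-fibers each of size at least $m_{k+1}$, contradicting the cap condition. For (b), I will observe that if $Y$ had no $m_{k+1}$-cap then every pre-cap would be covered by such a splitting rock, and attaching these rocks would produce an $(\varepsilon,\mbf{m}\^m_{k+1})$-mesa extending $Y$, contradicting maximality.

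For the contrapositive of (a), I would choose the minimal $\ell \geq 1$ such that $A_p$ fails to be $(\varepsilon,\ell,E)$-good for some $E \in \Lang$ of arity $n$; by minimality, $A_p$ is $(\varepsilon,k,E')$-good for all $1 \leq k < \ell$ and all $E' \in \Lang$. The failure furnishes subsets $A_1,\dots,A_{\ell-1} \in \hat{M} \setminus M$ with each $A_i$ being $(\varepsilon,\ell-i,E)$-good, elements $A_\ell,\dots,A_{n-1} \in M$, and a permutation $\sigma$ such that $\hat{E}^{\<\sigma(\ell-1),\dots,\sigma(0)\>}_\varepsilon(A_{\sigma(0)},\dots,A_{\sigma(n-1)}) = \uparrow$, where $A_p$ sits at position $\sigma(0)$. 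The $(\varepsilon,\ell-1,E)$-goodness of $A_1$, paired with the matching decreasing goodness of $A_2,\dots,A_{\ell-1}$, forces the inner value $f(a) := \hat{E}^{\<\sigma(\ell-1),\dots,\sigma(1)\>}_\varepsilon$ (with $a$ substituted at position $\sigma(0)$) to lie in $\{\top,\bot\}$ for every $a \in A_p$; so $A_p$ partitions as $A_p^\top \sqcup A_p^\bot$, and the $\uparrow$-condition then gives $|A_p^\top|, |A_p^\bot| \geq \varepsilon |A_p| \geq \varepsilon m_k \geq m_{k+1}$, using $|A_p| \in \mbf{m}$. I would then build a rock on $A_p$ by setting $\ell' := \sigma(0)$, $B^{\sigma(j)} := A_j$ for $1 \leq j < \ell$, and $B^{\sigma(j)} := \{A_j\}$ for $\ell \leq j \leq n-1$, with $\beta$ chosen so that the singletons occupy the outermost polling depths and each real subset $A_j$ sits at depth $n-\ell+j$; that depth demands exactly $(\varepsilon,\ell-j,E)$-goodness of $A_j$, matching the hypothesis, while singletons are good at every level. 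Since polling a singleton collapses to substituting its unique element, the rock's polling on $a$ reduces to $f(a)$, so the rock witnesses a $\geq m_{k+1}$ split of $A_p$, contradicting the cap condition and forcing $A_p$ to be $\varepsilon$-excellent.

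For (b), suppose for contradiction that $Y$ has no $m_{k+1}$-cap. Then every pre-cap $A_p$ (with $\varepsilon|A_p| \leq m_{k+1}$) is covered by some rock whose polling produces $\top$- and $\bot$-fibers of size $\geq m_{k+1}$, from which I select disjoint children of size $m_{k+1} \in \mbf{m}\^m_{k+1}$, automatically satisfying the mesa's child requirement of size $\geq \varepsilon|A_p|$ thanks to the size condition. Attaching these rocks at every pre-cap produces an $(\varepsilon,\mbf{m}\^m_{k+1})$-mesa extending $Y$, contradicting $m_{k+1}$-maximality. The step I expect to be most delicate is the rock construction in (a): naively re-using $\sigma$ as the polling permutation fails because the real subsets $A_j$ would then sit at outer polling depths demanding $(\varepsilon,n-j,E)$-goodness, strictly stronger than the $(\varepsilon,\ell-j,E)$-goodness that the hypothesis provides; sliding the padded singletons to the outermost slots is precisely the reordering that makes the goodness levels line up.
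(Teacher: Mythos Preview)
Your argument is correct and follows the same route as the paper, which disposes of both parts in one line each as ``immediate from the definitions.'' You have simply supplied the unpacking that the paper omits.

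For (a), your contrapositive with the explicit rock construction is exactly what lies behind the paper's ``follows immediately from the definition of $m_{k+1}$-cap and the fact that $\frac{m_{k+1}}{m_k}\le\varepsilon$.'' Your observation that $f(a)\in\{\top,\bot\}$ follows from the $(\varepsilon,\ell-1,E)$-goodness of $A_1$, and hence that the $\uparrow$-value forces $|A_p^\top|,|A_p^\bot|\ge\varepsilon|A_p|\ge\varepsilon m_k\ge m_{k+1}$, is the heart of the matter. The point you flag as delicate---that reusing $\sigma$ as the rock's polling order would demand $(\varepsilon,n-j,E)$-goodness of $A_j$ rather than the $(\varepsilon,\ell-j,E)$-goodness actually available---is a real issue, and your fix (sliding the singleton pads to the outermost polling slots, where their trivial goodness at every level absorbs the strong requirements) is correct. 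This extra care is needed precisely because the paper's own proof of (b) only goes through if the word ``rock'' in the cap definition is implicitly read to include the mesa's goodness requirements on the $B^j$'s; under that reading your good-rock construction is exactly what (a) demands.

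For (b), your argument coincides with the paper's: no cap at any pre-cap yields a splitting rock at each, and attaching these extends $Y$ to an $(\varepsilon,\mbf{m}{}^\wedge m_{k+1})$-mesa, contradicting maximality.
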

\begin{proof}
	(a) This follows immediately from the definition of $m_{k+1}$-cap and the fact that $\frac{m_{k+1}}{m_k} \leq \varepsilon$. 

	(b) If there is no $m_{k+1}$-cap for any $p \in \{0, 1\}^k$, then by the definition of an $(\varepsilon, \mbf{m})$-mesa we can find an extension of $Y$ to an $(\varepsilon, \mbf{m}\^m_{k+1})$-mesa, contradicting the assumption that $Y$ was $m_{k+1}$-maximal.
\end{proof}

In fact, an $(\varepsilon, \mbf{m})$-mesa is $m_{k+1}$-maximal if and only if it has some $m_{k+1}$-cap.

We will eventually want to obtain a bound on the height of an $(\varepsilon, \mbf{m})$-mesa based on the underlying $\Lang$-structure $\M$ having the non-$\brap$-branching property. To do this, we will need an $(\varepsilon, \mbf{m})$-mesa with constant relation and constant location. 

We first define what it means for a mesa to be a substructure of another.

\begin{definition}
	Let $k, k*\in \Nats$, let $\varepsilon > 0$, and suppose 
	$\mbf{m}\defas \<m_j\>_{j \leq k}$  and
	$\mbf{m^*}\defas \<m^*_j\>_{j \leq k^*}$  
	are staircases.
Let $Y$ be an $(\varepsilon, \mbf{m})$-mesa
and $Y^*$ an $(\varepsilon, \mbf{m^*})$-mesa.

	Then $Y^*$ is a \defn{substructure} of $Y$ if there are injective maps $\alpha\colon \{0, 1\}^{\leq k^*} \to \{0, 1\}^{\leq k}$ and $\gamma\colon \{0, \dots, k^*-1\} \to \{0, \dots, k-1\}$ such that, for all $i, i' \in  \{0, 1\}^{\leq k^*}$,
\begin{itemize}
\item $m^*_h = m_{\gamma(h)}$  for all $h \leq k^*$,

\item 
	$\len\bigl(\alpha(i)\bigr) = \gamma\bigl(\len(i)\bigr)$,

\item 
	if $i$ is an initial segment of $i'$ then $\alpha(i)$ is an initial segment of $\alpha(i')$,

\item if $\len(i) < k^*$, then 
		the rock of $Y$ at node $\alpha(i)$ equals
	the rock of $Y^*$ at node $i$,
\item  if $\len(i) = k^*$ and $\gamma(k^*) = k$, then 
		the pre-cap of $Y$ at node $\alpha(i)$ equals
	the pre-cap of $Y^*$ at node $i$, and
\item  if $\len(i) = k^*$ and $\gamma(k^*) < k$, then 
	the rock of $Y$ at node $\alpha(i)$ covers the
	the pre-cap of $Y^*$ at node $i$.
\end{itemize}

\end{definition}

We will soon show the key fact that for every $k^*\in \Nats$ there is some $k \ge k^*$, depending only on $k^*$, such that
every $(\varepsilon, \mbf{m})$-mesa of height at least $k$ has some substructure that is a $(\varepsilon, \mbf{m^*})$-mesa with constant location. We will use the following Ramsey-theoretic result about colored trees.

\begin{lemma}[{\cite[Theorem~2~(i)]{Ramsey}}]
\label{Lemma: Ramseys theorem for binary branching trees}
Let $p, q\ge 2$. Suppose $T$ is a binary branching tree of height at least $H > 5 \cdot q \cdot p \cdot \log p$ along with a map $\iota$ from the nodes of the tree to $\{0, \dots, q-1\}$. Then there is a binary branching tree $T^*$ and an injection $\alpha\colon T^* \to T$ such that 
\begin{itemize}
\item $T^*$ has height $p$,

\item $\alpha$ preserves the partial ordering of nodes in the tree, and preserves when two nodes are on the same level, and

\item $\iota \circ \alpha \colon T^* \to \{0, \dots, q-1\}$ is constant. 
\end{itemize}
\end{lemma}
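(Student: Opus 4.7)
The plan is to prove the lemma by induction on the height $p$ of the desired monochromatic embedded subtree, constructing $T^*$ and its embedding $\alpha$ inside $T$ simultaneously, level by level. First I would fix a target color $c \in \{0, \dots, q-1\}$ in advance: a pigeonhole over the $H+1$ nodes of any root-to-leaf path in $T$ yields a color $c$ appearing on at least $(H+1)/q$ nodes of some path, which then serves as the color of every $\alpha$-image (ultimately one verifies that some such color works by pigeonhole).

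I would then maintain an ``active set'' $S_k$ of nodes of $T$ at a common level, representing the images under $\alpha$ of the $2^k$ leaves of the currently constructed partial embedding, starting with $|S_0| = 1$ (a single node of color $c$) and performing $p$ doubling steps to reach $|S_p| = 2^p$. Each doubling step consists of finding a single deeper level $L$ of $T$ at which, below every node $v \in S_k$, there exist at least two color-$c$ descendants at level $L$; I would then choose two such descendants per active node to form $S_{k+1}$, preserving the needed order- and level-preserving injection into $T$.

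The central technical sublemma is that each doubling step can be carried out within $O(q \log p)$ extra levels below the current common level of $S_k$. For a single active node, pigeonhole over colors in the next $O(q)$ levels below already guarantees two color-$c$ descendants at some level in that window. The main obstacle, and the source of the $\log p$ factor in the bound, is to align the local $O(q)$-level windows for all $2^k$ active nodes into a single common level $L$ serving all of them simultaneously. I would handle this via an averaging/pigeonhole argument over a window of $O(q \log p)$ candidate levels, exploiting the slack that only two color-$c$ descendants per active node are required (rather than a constant density), which is what keeps the window logarithmic rather than linear in $2^k$.

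Summing the per-step cost $O(q \log p)$ over $p$ doubling steps produces a total descent of $O(p q \log p)$ levels in $T$, matching the claimed bound $H > 5 \cdot q \cdot p \cdot \log p$. The hard part is really the simultaneous-alignment sublemma; the rest is routine level-by-level bookkeeping and the initial color pigeonhole.
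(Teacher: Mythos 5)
The paper itself does not prove this lemma --- it cites Theorem~2(i) of Pach, Solymosi, and Tardos --- so you are constructing an argument from scratch, and it has a genuine gap in its central step.

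You fix the target color $c$ in advance (via a pigeonhole along a single root-to-leaf path) and then assert, as the ``easy'' half of your doubling sublemma, that for a single active node $v$, pigeonhole over the next $O(q)$ levels below $v$ already yields two color-$c$ descendants at a common level. This is false once $c$ has been fixed: the entire subtree strictly below a color-$c$ node $v$ can be colored with a single other color, so $v$ may have \emph{no} color-$c$ descendants at any level, let alone two at a common level within an $O(q)$ window. What pigeonhole actually gives (once a level below $v$ contains more than $q$ nodes, i.e.\ after about $\log q$ steps) is two same-colored descendants of $v$ at a common level --- but which color repeats depends on $v$ and on the level, and need not be $c$. Because the per-node claim already fails, the harder simultaneous-alignment step for all $2^k$ active nodes has no base to stand on, and the parenthetical hedge ``(ultimately one verifies that some such color works by pigeonhole)'' cannot be cashed out after the fact: no post hoc pigeonhole revives a doubling process once the subtree below an active node has simply run out of the color you committed to. A correct proof must either choose the color adaptively as the frontier descends, or aggregate information over all $q$ colors simultaneously (for instance via a potential- or weight-function argument, which is essentially what Pach--Solymosi--Tardos do). The level-by-level frontier bookkeeping and the cost accounting ($p$ steps times $O(q \log p)$ levels each) are sensible and would give the stated bound if the sublemma held, but fixing $c$ before exploring the tree is the structural flaw.
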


\begin{lemma}
\label{Lemma: Mesa has constant location substructure}
Suppose $n_\Lang, k^* \ge 2$,
and suppose $Y$ is an $(\varepsilon, \mbf{m})$-mesa of height $k > 5 \cdot n_\Lang \cdot k^* \cdot \log k^*$. Then there is 
	some staircase $\mbf{m^*}$ of length $k^*$ and some
	substructure 
	$Y^*$ of $Y$ 
	that is an $(\varepsilon, \mbf{m^*})$-mesa 
	which has constant location and constant relation. 
\end{lemma}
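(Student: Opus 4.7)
The plan is to apply \cref{Lemma: Ramseys theorem for binary branching trees} to the tree of rocks underlying $Y$. I color each interior node $i \in \{0,1\}^{<k}$ by the pair $\iota(i) := (Q_i, \ell_i)$, so that there are at most $|\Lang| \cdot q_\Lang = n_\Lang$ distinct colors in total. Since the height $k$ of $Y$ exceeds $5 \cdot n_\Lang \cdot k^* \cdot \log k^*$, \cref{Lemma: Ramseys theorem for binary branching trees}, applied with $q = n_\Lang$ and $p = k^*$, produces a binary branching tree $T^*$ of height $k^*$ together with an ancestry- and level-preserving injection $\alpha$ from $T^*$ into the tree of $Y$ along which $\iota \circ \alpha$ is constant, say with value $(Q, \ell)$.

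From this I extract the candidate substructure $Y^*$ and its staircase $\mbf{m^*}$. Level-preservation of $\alpha$ yields a strictly increasing map $\gamma$ from the levels of $T^*$ to the levels of $T$ satisfying $\len(\alpha(i)) = \gamma(\len(i))$. I define $\mbf{m^*}$ by $m^*_h := m_{\gamma(h)}$; since $\gamma$ is strictly increasing and $\mbf{m}$ is a staircase, the inequality $m^*_{h+1}/m^*_h \le \varepsilon^{\gamma(h+1) - \gamma(h)} \le \varepsilon$ shows that $\mbf{m^*}$ is itself a staircase. I then assign to each $i \in \{0,1\}^{<k^*}$ the rock of $Y$ at $\alpha(i)$ and to each $i \in \{0,1\}^{k^*}$ the pre-cap of $Y$ at $\alpha(i)$. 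If necessary, I swap the $0/1$ labels on the successors of $i$ in $T^*$ so that $A^*_{i\^s}$ lies below $A_{\alpha(i)\^s}$ in $Y$; this relabeling is consistent with the $\bot/\top$ convention in the mesa definition because swapping successors simultaneously swaps the asserted truth values.

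It remains to verify the four conditions defining an $(\varepsilon, \mbf{m^*})$-mesa. The goodness of each $B$-set in a rock of $Y^*$ is inherited directly from the corresponding rock of $Y$. The universally quantified truth-value conditions $\hat{Q}^{\ldots}_{\varepsilon}(\ldots, a, \ldots) = \bot$ (resp.\ $\top$) for $a \in A^*_{i\^0}$ (resp.\ $a \in A^*_{i\^1}$) transfer from $A_{\alpha(i)\^s}$ to its subset $A^*_{i\^s} = A_{\alpha(i\^s)}$. The size conditions $|A^*_{i\^s}| \in \mbf{m^*}$ and $|A^*_{i\^s}| \ge \varepsilon |A^*_i|$ follow by chaining the analogous conditions along the intermediate levels of $Y$ that $\gamma$ collapses. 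Finally, constant relation $Q$ and constant location $\ell$ are precisely the monochromaticity supplied by the Ramsey subtree.

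The main obstacle is the last piece of bookkeeping: propagating the size and $\mbf{m^*}$-membership conditions through the multi-level chains of $Y$ that correspond to single-level steps of $Y^*$. This is handled by combining the staircase property of $\mbf{m}$ with the mesa-ratio condition within $Y$, which together ensure that sizes and allowed values track correctly under the collapse prescribed by $\gamma$. A subsidiary technical point is the left/right relabeling of successors, which is needed because the Ramsey lemma only guarantees ancestry preservation rather than preservation of the $0$- and $1$-child structure; the $\bot/\top$ symmetry of the mesa definition makes this relabeling harmless.
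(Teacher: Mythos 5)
Your approach matches the paper's: the paper's proof of this lemma consists of the single sentence ``This follows immediately from Lemma~\ref{Lemma: Ramseys theorem for binary branching trees},'' and what you have written is a reasonable fleshing-out of what that citation must mean. Your choice of coloring by $(Q_i, \ell_i)$, giving at most $n_\Lang = |\Lang|\cdot q_\Lang$ colors, is the right one and matches the $5 \cdot n_\Lang \cdot k^*\cdot \log k^*$ bound in the hypothesis; the extraction of $\gamma$ from level-preservation and the definition $m^*_h := m_{\gamma(h)}$ agree exactly with the paper's definition of substructure; and your observation that the Ramsey injection preserves ancestry but not the $0$/$1$-labeling of children, requiring a (harmless, by symmetry) relabeling, is a genuine and correct subtlety that the paper elides.

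One point you gloss over deserves scrutiny, since it is the one place where ``immediate'' is not immediate. You claim the ratio bound $|A^*_{i\^s}| \geq \varepsilon |A^*_i|$ for the new mesa ``follows by chaining'' the analogous condition along the $d := \gamma(\len(i)+1) - \gamma(\len(i))$ intermediate levels of $Y$. But chaining the per-level inequality $|A_{j\^t}| \geq \varepsilon |A_j|$ a total of $d$ times yields only $|A^*_{i\^s}| \geq \varepsilon^d |A^*_i|$, which is \emph{weaker} than $\varepsilon |A^*_i|$ whenever $d > 1$ (as $\varepsilon < 1$). So the chaining argument you invoke does not establish the mesa's ratio requirement at a skipped level, and you should say where the stronger bound actually comes from (e.g., from the implicit level-by-level nesting of the $A$-sets together with the $\mbf{m}$-membership constraint). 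Since the paper itself gives no indication of how this is resolved and treats the lemma as immediate, this gap is shared with the source; nonetheless, a careful proof should address it rather than asserting that the bookkeeping ``tracks correctly.''
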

\begin{proof}
This follows immediately from \cref{Lemma: Ramseys theorem for binary branching trees}.
\end{proof}

Our next step is to show how to get from a mesa having constant location and constant relation to a witness to the $k$-branching property.

For $E \in \Lang$ and $0 \leq \ell \leq \arity(E)-1$, write
\[
\mathcal{E}^{\ell}(x_\ell, x_0, \dots, x_{\ell - 1}, x_{\ell+1}, \dots, x_{\arity(E)-1}) \defas E(x_0, \dots, x_{\arity(E)-1}),
\]
so that we may easily isolate $x_\ell$ from the other variables when talking about stability.

\begin{lemma}
\label{Lemma: Yellow implies witness to branch}
Suppose there is an $(\varepsilon, \mbf{m})$-mesa $Y$ of height $k$ with constant location $\ell$ and constant relation $Q$, and suppose $2^k \cdot (\arity(E)-1) \cdot \varepsilon < 1$. Then $(M, \mathcal{E}^{\ell}(x_\ell; x_0, \dots, x_{\ell - 1},$ $x_{\ell+1}, \dots, x_{n-1}))$ has the $k$-branching property.
\end{lemma}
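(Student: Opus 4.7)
The plan is to read off witnesses to the $k$-branching property directly from the mesa $Y$. Let $n\defas\arity(Q)$. For each leaf $i\in\{0,1\}^k$ I fix, once and for all, an arbitrary element $\a_i\in A_i$ (the pre-cap is nonempty because $|A_i|\in\mbf{m}$). For each internal node $j\in\{0,1\}^{<k}$ I will choose a tuple
\[
\b_j\defas\<b_j^0,\ldots,b_j^{\ell-1},b_j^{\ell+1},\ldots,b_j^{n-1}\>\in\prod_{t\ne\ell}B_j^t
\]
with the property that, for every leaf $i$ with $j\preceq i$,
\[
\M\models Q(b_j^0,\ldots,b_j^{\ell-1},\a_i,b_j^{\ell+1},\ldots,b_j^{n-1})\iff i(\len(j))=1.
\]
Once such $\b_j$ are in hand, $\<\a_i\>_{i\in\{0,1\}^k}$ and $\<\b_j\>_{j\in\{0,1\}^{<k}}$ manifestly satisfy the defining implication of the $k$-branching property for $\mathcal{E}^{\ell}(x_\ell;x_0,\dots,x_{\ell-1},x_{\ell+1},\dots,x_{n-1})$ in $\M$.

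To produce such a $\b_j$, fix $j$ of length $t$ and a descendant leaf $i$, set $h\defas i(t)$, and put $a\defas\a_i$. Because the children in the mesa are cut out as subsets of their parent, by transitivity $a\in A_i\subseteq A_{j\^h}$, and hence the third and fourth bullets of \cref{Definition: Mesa} give
\[
\hat{Q}^{\<\beta_j(n-1),\ldots,\beta_j(1)\>}_{\varepsilon}(B_j^0,\dots,B_j^{\ell-1},a,B_j^{\ell+1},\dots,B_j^{n-1})=\begin{cases}\bot & \text{if }h=0,\\ \top & \text{if }h=1.\end{cases}
\]
The first bullet of the mesa definition also tells us that $B_j^t$ is $(\varepsilon,n-\beta_j^{-1}(t),Q)$-good for each $t\ne\ell$; under the polling order $\<\beta_j(n-1),\ldots,\beta_j(1)\>$, these goodness indices are precisely what is needed to apply \cref{Lemma: Number of mistakes in approximate truth} to the above partial relation with $n-1$ set arguments and the single fixed element argument $a$. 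That lemma then yields that the fraction of $\b_j\in\prod_{t\ne\ell}B_j^t$ for which $\M\models Q(b_j^0,\dots,a,\dots,b_j^{n-1})$ disagrees with the truth value indicated by $h$ is at most $(n-1)\varepsilon$.

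A union bound over the $2^{k-t}$ descendant leaves of $j$ now shows that the fraction of $\b_j$ violating the required constraint for at least one descendant is at most $2^{k-t}(n-1)\varepsilon\le 2^k(\arity(E)-1)\varepsilon<1$ by the hypothesis, so a valid $\b_j$ exists. Crucially, the constraints imposed at distinct nodes $j$ involve disjoint coordinates in the $\b$-tuples (they share only the globally-fixed $\a_i$'s), so these selections can be carried out independently at each $j$, assembling the required witness.

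The main subtlety to check is the alignment between the mesa's goodness bookkeeping and the hypothesis of \cref{Lemma: Number of mistakes in approximate truth} under the chosen polling order; this is exactly the role played by the injection $\beta_j$ in the rock data, and it is what keeps the union-bound factor at $2^k$, rather than the $k\cdot 2^k$ that a joint union bound over all pairs $(i,j)$ with $j\prec i$ would otherwise force.
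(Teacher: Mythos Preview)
Your argument is correct and follows essentially the same route as the paper: fix leaf elements $a_\eta\in A_\eta$, then at each internal node $\nu$ use \cref{Lemma: Number of mistakes in approximate truth} to bound the ``bad'' tuples in $\prod_{t\ne\ell}B_\nu^t$ for each descendant leaf, and take a union bound over the at most $2^k$ descendants to extract a single $\b_\nu$ that works for all of them. The only cosmetic differences are that you make the nesting $A_i\subseteq A_{j\^h}$ explicit (the paper uses it silently at the end when asserting the $\b_\nu$ and $a_\eta$ witness branching) and you record the sharper count $2^{k-t}$ rather than $2^k$; your final remark attributing the $2^k$-versus-$k\cdot 2^k$ saving to $\beta_j$ is slightly off---that saving comes from choosing each $\b_j$ locally rather than globally, while $\beta_j$'s role is only to align goodness with the polling order so that \cref{Lemma: Number of mistakes in approximate truth} applies.
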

\begin{proof}
We use the notation for the components of $Y$ as in \cref{Definition: Mesa}. Without loss of generality, we may assume $\ell = 0$. For each $\eta \in \{0, 1\}^k$ let $a_{\eta} \in A_\eta$. Now for each $\eta \in \{0, 1\}^k$ and each $\nu \in \{0, 1\}^{< k}$ define 
	\begin{eqnarray*}
		U_{\nu, \eta}\defas \Bigl\{(b_1, \dots, b_{n-1}) \in \prod_{1 \leq j < n} B_\nu^j \st \hspace*{200pt}~\\
		\hat{E}^{\<\beta_\nu(n-1), \ldots, \beta_\nu(1)\>}_{\varepsilon}(a_{\eta}, b_1, \dots, b_{n-1}) \neq \hat{E}_{\varepsilon}^{\<\beta_\nu(n-1), \ldots, \beta_\nu(1)\>}(a_{\eta}, B_\nu^1, \dots, B_{\nu}^{n-1}) 
		\Bigr\}.
	\end{eqnarray*}
Now by \cref{Lemma: Number of mistakes in approximate truth}, 
	we have $|U_{\nu, \eta}| < (n-1) \cdot \varepsilon \cdot \prod_{1 \leq j < n} |B_\nu^j|$
	for every $\eta \in \{0, 1\}^k$ and $\nu \in \{0, 1\}^{< k}$.
	Hence 
	\[
		\Bigl|\bigcup_{\nu \preceq \eta} U_{\nu, \eta}\Bigr| < 2^k \cdot (n-1) \cdot \varepsilon \cdot \prod_{1 \leq j < n} |B_\nu^j| 
		\]
	for every $\nu \in \{0, 1\}^{<k}$. 

	But we assumed $2^k \cdot (n-1) \cdot \varepsilon < 1$, and so for any $\nu$ we can find some $\mbf{b}_\nu \defas (b_\nu^1, \dots, b_\nu^{n-1}) \in \prod_{1 \leq j < n-1}B_{\nu}^j \setminus \bigcup_{\nu \preceq \eta} U_{\nu, \eta}$. 

	But then by construction, $\<\mbf{b}_{\nu}\>_{\nu \in \{0, 1\}^{< k}}$ and $\<a_\eta\>_{\eta \in \{0, 1\}^k}$ witness that 
	\linebreak
	$\mathcal{E}^0(x_0;x_1, \dots,  x_{n-1})$ has the $k$-branching property. 
\end{proof}

Putting all of these together we get the following crucial proposition.

\begin{proposition}
\label{Proposition: Get supset which is epsilon-excellent}
	Let $\M$ be a finite $\Lang$-structure with underlying set $M$.
	Suppose that $\M$ does not have the $\brap$-branching property and
	that $0 < \varepsilon < 2^{-\brap} \cdot n_\Lang^{-1}$.
	Let
	$g = \lceil 5 \cdot n_\Lang \cdot \brap \cdot \log \brap \rceil$. 
	Further suppose that $\mbf{m} \defas \<m_i\>_{i \leq g}$ is a staircase, and 
	that $A\subseteq M$ is such that $|A| \geq m_0$.
	Then $A$
	contains an $\varepsilon$-excellent
	subset $A'$ of size $m_i$ 
	for some $i \leq g$. 
\end{proposition}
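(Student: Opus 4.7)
The strategy is to construct, starting from a subset of $A$ of size $m_0$, a sequence of ever-taller mesas using the given staircase $\mbf{m}$; as soon as the mesa can no longer be extended, a cap appears among its pre-caps, and caps are automatically $\varepsilon$-excellent. We will show that this construction must terminate at some height $k \le g$, because otherwise the mesa grows tall enough to force the $\brap$-branching property via the combinatorial lemmas already established.

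First, choose $A_\emptyset \subseteq A$ with $|A_\emptyset| = m_0$, which is possible since $|A| \geq m_0$; regard $A_\emptyset$ as the unique pre-cap of the trivial $(\varepsilon,\<m_0\>)$-mesa $Y_0$ of height~$0$. Now iterate: at stage $k \geq 0$, given an $(\varepsilon,\<m_0,\dots,m_k\>)$-mesa $Y_k$ whose pre-caps all have size exactly $m_k$, attempt to extend $Y_k$ to a mesa of height $k+1$ using the next staircase entry $m_{k+1}$. If no such extension exists, i.e., $Y_k$ is $m_{k+1}$-maximal, then \cref{Lemma: Bounds on size of leaves of mesas}(b) furnishes an $m_{k+1}$-cap $A_p$ among the pre-caps, and part (a) of the same lemma declares $A_p$ to be $\varepsilon$-excellent; since $|A_p| = m_k$, setting $A' \defas A_p$ finishes the proof with $i = k$. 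Otherwise, we take the extension and shrink each newly created child down to a subset of size exactly $m_{k+1}$, which is permissible because the staircase condition $m_{k+1} \leq \varepsilon m_k$ is respected and because the truth-value agreement at the two sides of each rock is preserved under passing to subsets.

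Suppose for contradiction that the iteration succeeds all the way to producing $Y_g$ together with an extension to a mesa $Y_{g+1}$ of height $g+1$ (choosing any valid $m_{g+1} \leq \varepsilon m_g$). Since $g+1 > 5 \cdot n_\Lang \cdot \brap \cdot \log \brap$, \cref{Lemma: Mesa has constant location substructure} applied with $k^* = \brap$ yields a substructure $Y^*$ of $Y_{g+1}$ that is a mesa of height $\brap$ with constant location $\ell$ and constant relation $Q$. The hypothesis $\varepsilon < 2^{-\brap} \cdot n_\Lang^{-1}$ gives
\[
2^{\brap} \cdot (\arity(Q)-1) \cdot \varepsilon \;\leq\; 2^{\brap} \cdot n_\Lang \cdot \varepsilon \;<\; 1,
\]
so \cref{Lemma: Yellow implies witness to branch} applied to $Y^*$ exhibits the $\brap$-branching property of $\mathcal{Q}^\ell$ in $\M$, contradicting the assumption that $\M$ does not have the $\brap$-branching property. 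Hence the iteration must halt at some stage $k \le g$, delivering the required $\varepsilon$-excellent subset of size $m_k$.

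The main obstacle is the bookkeeping in the iterative step: one must verify that at each stage one can legitimately standardize the new pre-caps to size exactly $m_{k+1}$ while preserving all conditions in \cref{Definition: Mesa}, and that the witnesses to non-excellence along a branch translate faithfully into the data of a rock. Once this is in place, the contradiction is obtained simply by chaining \cref{Lemma: Bounds on size of leaves of mesas}, \cref{Lemma: Mesa has constant location substructure}, and \cref{Lemma: Yellow implies witness to branch}, with the arithmetic bounds lining up exactly as the hypotheses on $\varepsilon$ and $g$ dictate.
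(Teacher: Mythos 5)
Your proof follows essentially the same route as the paper's: use \cref{Lemma: Mesa has constant location substructure} and \cref{Lemma: Yellow implies witness to branch} to conclude that no $(\varepsilon,\mbf{m})$-mesa can have height exceeding $g$, then invoke \cref{Lemma: Bounds on size of leaves of mesas} to extract an $\varepsilon$-excellent cap from a maximal mesa. Where you differ is in making the iteration explicit and in standardizing the pre-caps to have size exactly $m_k$ at level $k$; the paper argues more abstractly (``there must be some $j\le g$ and a mesa which is $m_{j+1}$-maximal''). Your explicitness is a mild improvement in clarity, in particular your careful handling of the contradiction at height $g+1$ (choosing an auxiliary $m_{g+1}$) sidesteps an off-by-one subtlety that the paper glosses over when $5 n_\Lang \brap \log \brap$ happens to be an integer.

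One remark on the bookkeeping you flag as ``the main obstacle.'' You justify shrinking a child to size exactly $m_{k+1}$ by citing ``the staircase condition $m_{k+1} \le \varepsilon m_k$.'' That inequality does not actually make the shrinking permissible: the mesa definition requires $|A_{i^{\frown}s}| \ge \varepsilon |A_i|$, so one needs $m_{k+1} \ge \varepsilon m_k$, the opposite inequality. The source of the tension is the paper's own staircase definition, whose inequality $\frac{m_{j+1}}{m_j} \le \varepsilon$ conflicts both with the mesa condition and with the staircase used in \cref{Proposition: Partition of excellent sets} (where $\frac{m_{i+1}}{m_i} = \frac{1}{\lfloor 1/\varepsilon \rfloor} \ge \varepsilon$). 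Reading the staircase condition as $\frac{m_{j+1}}{m_j} \ge \varepsilon$ makes everything consistent and makes your size-standardization step go through exactly as you intend; you have simply inherited the sign from the paper. This does not change the structure of your argument, which is the correct one.
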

\begin{proof}
By \cref{Lemma: Branching property from order property}, for any $E \in \Lang$ and $\ell < \arity(E)$ the structure 
	\[(M, \mathcal{E}^{\ell}(x_\ell; x_0, \dots, x_{\ell - 1}, x_{\ell+1}, \dots, x_{\arity(E)-1}))\]
	has the non-$\brap$-branching property. 
	By our assumption on $\varepsilon$, we may apply \cref{Lemma: Yellow implies witness to branch}, and so any $(\varepsilon, \mbf{m})$-mesa of constant location and constant relation $E$ can have height at most $\brap$. But then by \cref{Lemma: Mesa has constant location substructure},
	the height of any $(\varepsilon, \mbf{m})$-mesa is at most $5 \cdot n \cdot \brap \cdot \log \brap $. 

	In particular there must be some $j \leq g$ and $(\varepsilon, \<m_i\>_{i \leq g})$-mesa which is $m_{j+1}$-maximal. But then by \cref{Lemma: Bounds on size of leaves of mesas} this mesa must have a cap, which has size $m_j$ for some $j \le g$. Further, by 
\cref{Lemma: Bounds on size of leaves of mesas} this cap
	is $\varepsilon$-excellent.
\end{proof}

Having developed a method to find a large $\varepsilon$-excellent subset of any sufficiently large subset of $M$, we now aim to find a partition of $\M$ 
such that (1) all but one part is $\varepsilon$-excellent and (2) for any two parts, the size of one divides the size of the other, along with a bound on the size of the non-$\varepsilon$-excellent part.

\begin{proposition}
\label{Proposition: Partition of excellent sets}
	Let $\M$ be a finite $\Lang$-structure with underlying set $M$.
	Suppose $0 < \varepsilon < 2^{-\brap} \cdot n_\Lang^{-1}$, and that
\begin{itemize}
\item $\M$ does not have the $\brap$-branching property,

\item $n = |\Lang| \cdot q_\Lang$,

\item $g = \lceil 5 \cdot n_\Lang \cdot \brap \cdot \log \brap \rceil$,

\item $r = \lfloor\frac{1}{\varepsilon}\rfloor$, and

\item $\mbf{m} \defas \<m_i\>_{i \leq g}$ is a staircase such that
\begin{itemize}
    \item 
		$\frac{m_{i}}{m_{i+1}} = r$ 
		for all $0 \leq i < g$,  and

    \item $|M| \geq m_0$.
\end{itemize}
\end{itemize}
Then there is a subset $M^* \subseteq M$ and a partition $P$ of $M^*$ such that 
\begin{itemize}
\item $|M\setminus M^*| <  m_0$,

\item each part of $P$ is $\varepsilon$-excellent, and

\item $|p| \in \mbf{m}$  for all $p \in P$.

\end{itemize}

\end{proposition}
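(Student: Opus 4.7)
The plan is to prove the proposition by greedy iteration, repeatedly invoking \cref{Proposition: Get supset which is epsilon-excellent} to peel off an $\varepsilon$-excellent subset of an allowed size from the uncovered remainder of $M$, until too little of $M$ is left to continue.

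More concretely, I would set $N_0 \defas M$ and $\W_0 \defas \emptyset$, and then proceed inductively. Given $N_t \subseteq M$ with $|N_t| \geq m_0$, the hypotheses of \cref{Proposition: Get supset which is epsilon-excellent} are satisfied by $N_t$: the structure $\M$ is fixed (so the non-$\brap$-branching property is unchanged), the constants $\varepsilon$, $\brap$, $g$ and the staircase $\mbf{m}$ are as given in the statement, and $|N_t| \geq m_0$ is the size requirement. Applying the proposition therefore yields an $\varepsilon$-excellent subset $A_t \subseteq N_t$ with $|A_t| = m_{i_t}$ for some $i_t \leq g$. I then set $\W_{t+1} \defas \W_t \cup \{A_t\}$ and $N_{t+1} \defas N_t \setminus A_t$, and continue. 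The process halts at the first step $T$ with $|N_T| < m_0$; since each $|A_t| \geq m_g \geq 1$, the sequence $|N_t|$ is strictly decreasing in $\Nats$, so $T$ is finite (indeed $T \leq |M|/m_g$).

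Finally, I take $P \defas \W_T$ and $M^* \defas \bigcup_{t < T} A_t = M \setminus N_T$. By construction the $A_t$ are pairwise disjoint, so $P$ is a partition of $M^*$; each part is $\varepsilon$-excellent by the choice of $A_t$; and each part has size in $\mbf{m}$ since $|A_t| = m_{i_t}$ with $i_t \leq g$. The stopping criterion gives $|M \setminus M^*| = |N_T| < m_0$, which is the remaining condition.

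There is essentially no obstacle here beyond bookkeeping: once \cref{Proposition: Get supset which is epsilon-excellent} is in hand, the whole argument is a greedy extraction and the only thing to check is that the hypotheses of that proposition (in particular the threshold $|N_t| \geq m_0$) hold at each iteration, which is exactly the condition ensured by the stopping rule. The role of the fixed ratio $m_i/m_{i+1} = r$ is not used in this proof at all; it is a structural feature of $\mbf{m}$ preserved for use later (in \cref{Section: Equitable Partitions of Excellent Sets}) when refining $P$ into an equitable partition.
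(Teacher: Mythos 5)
Your proof is correct and is essentially the paper's own argument: an inductive greedy extraction using \cref{Proposition: Get supset which is epsilon-excellent} to peel off $\varepsilon$-excellent pieces of size in $\mbf{m}$ until fewer than $m_0$ elements remain. The paper phrases it as an induction on partial partitions $\<P_j\>_{j\le n}$ and remainders $M_n$, but the content is identical, including the observation that the stopping rule is exactly what preserves the hypothesis $|N_t|\geq m_0$ needed at each step.
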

\begin{proof}
	We define the partition by induction. For the base case, let $M_0 \defas M$ and let $P_0$ be an $\varepsilon$-excellent subset of $M_0$ with $|P_0|\in \mbf{m}$, as guaranteed by \cref{Proposition: Get supset which is epsilon-excellent}.
	
	For the inductive step, suppose we that have already defined $M_n$ and $\<P_j\>_{j \le n}$, where each $P_j$ is $\varepsilon$-excellent and whose size is in $\mbf{m}$.
	Let $M_{n+1} \defas M_n \setminus P_n$. 
	
	If $|M_{n+1}| < m_0$ then let $M^* \defas M \setminus M_{n+1}$ and let $P \defas \{P_i\}_{i \leq n}$; then $M^*$ and $P$ have the desired properties.

	Otherwise let $P_{n+1}$ be an $\varepsilon$-excellent subset of $M_{n+1}$ with $|P_{n+1}| \in \mbf{m}$, as guaranteed by \cref{Proposition: Get supset which is epsilon-excellent}, and proceed to the next step of the induction.
\end{proof}

\section{Equitable partitions of excellent sets}
\label{Section: Equitable Partitions of Excellent Sets}

We have just seen, 
in \cref{Proposition: Partition of excellent sets},
that a large subset of a 
sufficiently large structure $\M$ may be partitioned into $\varepsilon$-excellent sets.
In this section, we show, in \cref{Dividing epsilon-excellent classes into uniform excellent classes},
how to refine this into an
equitable partition of $\M$ into $(\varepsilon + \zeta)$-excellent
sets, for some $\zeta > 0$.

Then,
in the main results of this section, 
\cref{Proposition: Equitable excellent partition}
and
\cref{Theorem: Stable Regularity for Relational Structures with choice of partition size}, 
we show how to
uniformly distribute the elements of our structure not in this large subset, obtaining an equitable partition of the entire structure which witnesses
that it is close in edit distance to an equitable blow-up.

Throughout this section, let $\M$ be a finite $\Lang$-structure with underlying set $M$.

Our first lemma immediately implies that
if a set agrees with an $\varepsilon$-excellent set on the truth values of all edge relations in $\Lang$ with respect to all parameters that are elements of $M$, then the set itself must be $\varepsilon$-excellent.

\begin{lemma}
\label{(epsilon k)-good by proxy}
Let $E \in \Lang$ and let $n$ be the arity of $E$.  Suppose that $A$ is $(\varepsilon, k, E)$-good and that $A'$ is such that for all elements $b_1, \dots, b_{n-1} \in M$ and every permutation $\sigma$ of $n$,
\[
\hat{E}_{\varepsilon}(x_{\sigma(0)}, \dots, x_{\sigma(n-1)}) = \hat{E}_{\varepsilon}(y_{\sigma(0)}, \dots, y_{\sigma(n-1)}) 
\]
where $x_0 = A$ and $y_0 = A'$, and
	$x_i = y_i = b_i$ whenever $1 \leq i < n$.
	Then $A'$ is $(\varepsilon, k, E)$-good. 
\end{lemma}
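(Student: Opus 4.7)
I would proceed by induction on $k$. The base case $k = 1$ is immediate: $(\varepsilon, 1, E)$-goodness of $A$ is precisely the assertion that for each position $p$ and each way of filling the remaining $n - 1$ positions with elements of $M$, the partial relation $\hat{E}^{\<p\>}_\varepsilon$ returns a truth value. The lemma's hypothesis forces this value to agree for $A$ and $A'$, so truth values transfer.

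For the inductive step with $k \geq 2$, first apply the inductive hypothesis at each $k' < k$ --- using that $A$ is in particular $(\varepsilon, k', E)$-good --- to conclude that $A'$ is $(\varepsilon, k', E)$-good for every $1 \leq k' < k$; this discharges the first clause of $(\varepsilon, k, E)$-goodness of $A'$. For the remaining clause, we must show that for any admissible $A_1, \ldots, A_{k-1}$ (with $A_i$ being $(\varepsilon, k-i, E)$-good), any $A_k, \ldots, A_{n-1} \in M$, and any permutation $\sigma$, the value
\[
\hat{E}^{\<\sigma(k-1), \ldots, \sigma(0)\>}_\varepsilon(A'_{\sigma(0)}, A_{\sigma(1)}, \ldots, A_{\sigma(n-1)})
\]
with $A'_0 = A'$ lies in $\{\top, \bot\}$. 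Since the analogous computation with $A$ in place of $A'$ yields a truth value by $(\varepsilon, k, E)$-goodness of $A$, it suffices to show that these two computations agree.

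I would prove this agreement via an auxiliary claim, by an inner induction on $\ell = \len(\mbar)$: for every $\mbar$ whose range contains the position $p$ of $A$ (respectively, $A'$) in the argument tuple, and every compatible placement of the other arguments, $\hat{E}^\mbar_\varepsilon(\ldots, A, \ldots) = \hat{E}^\mbar_\varepsilon(\ldots, A', \ldots)$. The base $\ell = 1$ is the lemma's hypothesis. For the inductive step, write $\mbar = \kk \^ j$. If $j \neq p$, the top-level recursion averages over a set at a position other than $p$, leaving $A$ (or $A'$) at position $p$ in each recursive call; the inner induction on $\kk$ then yields element-by-element agreement, so the fractions and outputs coincide. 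If $j = p$, then $A$ (or $A'$) itself is polled first; I would handle this by first invoking \cref{Proposition: Order of unraveling doesn't matter for edge relation on sets} on the $A$ side to reorder $\mbar$ into an $\mbar^*$ that polls position $p$ last, then applying the already-handled $j \neq p$ case under $\mbar^*$ to transfer the value from $A$ to $A'$, and finally closing the loop by running the counting argument in the proof of \cref{Proposition: Order of unraveling doesn't matter for edge relation on sets} (which rests on \cref{Lemma: Number of mistakes in approximate truth}) on the $A'$ side, using only the majority behavior of $A'$ furnished by its already-established $(\varepsilon, 1, E)$-goodness together with the goodness of the other sets.

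The main obstacle is exactly this last $j = p$ case: we cannot invoke \cref{Proposition: Order of unraveling doesn't matter for edge relation on sets} directly on the $A'$ side, because its full $(\varepsilon, k, E)$-goodness is what we are trying to establish. The resolution is the two-step bridge described above, using the $A$ side to swap polling orders and then closing with a direct counting bound on the $A'$ side; everything else unwinds by routine manipulation of the definitions.
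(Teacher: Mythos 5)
Your high-level plan --- an outer induction on $k$ with base case $k=1$ discharged by the hypothesis, and an inductive step that transfers truth values from $A$ to $A'$ by peeling one layer of the polling order --- is indeed the structure of the paper's proof. Where you diverge is in the scope of the claim you set up for the inner induction: you quantify over \emph{every} polling tuple $\mbar$ whose range contains the position $p$ of $A$, and that overgeneralization is what forces you into the $j=p$ case you correctly identify as the obstacle. But that case never arises in what actually needs to be shown. The definition of $(\varepsilon,k,E)$-goodness constrains the polling order: the tested set $A_0$ always sits at one fixed end of the polling sequence $\<\sigma(k-1),\dots,\sigma(0)\>$, with the auxiliary sets $A_1,\dots,A_{k-1}$ filling the remaining slots in decreasing order of goodness. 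The paper's inductive statement $(*_k)$ is quantified only over polling orders of this restricted shape, so when the inductive step unwinds one level of the recursion it always peels off the auxiliary set $A_{k-1}$ --- never $A$ or $A'$ --- leaving a length-$(k-1)$ poll that again has $A$ (resp.\ $A'$) at the fixed end, to which $(*_{k-1})$ applies directly. No appeal to \cref{Proposition: Order of unraveling doesn't matter for edge relation on sets} is needed, and the argument is a single induction, not two nested ones.

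Beyond being unnecessary, your proposed bridge for the $j=p$ case does not close. You plan to reorder via \cref{Proposition: Order of unraveling doesn't matter for edge relation on sets} on the $A$ side, transfer to $A'$ under the reordered $\mbar^*$ using the $j\neq p$ case, and then ``close with a direct counting bound on the $A'$ side, using only the majority behavior of $A'$ furnished by its already-established $(\varepsilon,1,E)$-goodness.'' But the counting argument in \cref{Proposition: Order of unraveling doesn't matter for edge relation on sets} (and the bound it draws from \cref{Lemma: Number of mistakes in approximate truth}) is a comparison between two quantities \emph{both already known} to lie in $\{\top,\bot\}$: it rules out the combination $H^{1,0}=\top$, $H^{0,1}=\bot$ (and its mirror) given that both are truth values. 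It does not produce a truth value for one side from knowledge of the other. And $(\varepsilon,1,E)$-goodness of $A'$ only certifies that $\hat{E}^{\<p\>}_\varepsilon(\ldots,A',\ldots)$ returns a truth value when all other arguments are \emph{elements} of $M$; in the $j=p$ case of your inner induction, the remaining arguments are sets, so it gives you nothing about the outermost poll over $A'$. In short, the circularity you flag at the outset is not resolved by the two-step bridge --- the fix is to notice that the definition of goodness never puts you in the $j=p$ position in the first place, and to induct only over the restricted family of polling orders that the definition actually quantifies over.
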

\begin{proof}
We will prove the following statement $(*_k)$ by induction on $k$:

\noindent 	$(*_k)$: For all $b_{k-1}, \dots, b_n \in M$
and	permutations $\sigma$ of $n$, 
if 	
$B_i$ is $(\varepsilon, k-i, E)$-good
	for all $1 \le i < k$, then 
	\[
		\hat{E}^{\sigma^+}_{\varepsilon}(x_{\sigma(0)}, \dots, x_{\sigma(n-1)}) = \hat{E}^{\sigma^+}_{\varepsilon}(y_{\sigma(0)}, \dots, y_{\sigma(n-1)}) 
\]
where $x_0 = A$ and $y_0 = A'$, 
	where $\sigma^+\defas  \sigma|_{\{0, \ldots, k-1\}}$,
	and 
	$x_i = y_i = B_i$ 
	whenever $1 \leq i < k$, 
	and $x_i = y_i = b_i$ whenever $k \leq i < n$.

	\vspace*{10pt}	

\noindent \ul{\it Case $k = 1$:} \nl
This is immediate by our assumption.  \nl\nl
\ul{\it Case $k > 1$:} \nl
By the inductive assumption, $A'$ is $(\varepsilon, k-1, E)$-good.
	We must show that it is $(\varepsilon, k, E)$-good.

Now suppose $A_1, \dots, A_{k-1}\subseteq M$ and $a_k, \dots, a_{n-1} \in M$, where $A_i$ is $(\varepsilon, k -i, E)$-good whenever $1 \le i < k$.
	Without loss of generality, it suffices to show that
\[
	\hat{E}^\id_{\varepsilon}(A, A_1, \dots, A_{k-1}, a_k, \dots, a_{n-1}) = 
	\hat{E}^\id_{\varepsilon}(A', A_1, \dots, A_{k-1}, a_k, \dots, a_{n-1}),
\]
where $\id$ is the identity map on $\{0, \ldots, k-1\}$.
But we know that
	\[
		\hat{E}^\id_{\varepsilon}(A, A_1, \dots, A_{k-1}, a_k, \dots, a_{n-1}) \in \{\top, \bot\}.\]
	Suppose that 
	$\hat{E}^\id_{\varepsilon}(A, A_1, \dots, A_{k-1}, a_k, \dots, a_{n-1}) = \top$. Then 
\[
	\frac{|\{a \in A_{k-1}\st \hat{E}^\id_{\varepsilon}(A, A_1, A_2, \dots, A_{k-2}, a, a_k, \dots, a_{n-1}) = \top\}|}{|A_{k-1}|} \geq 1-\varepsilon.
\]
But then by the inductive hypothesis we also have
\[
	\frac{|\{a \in A_{k-1}\st \hat{E}^\id_{\varepsilon}(A', A_1, A_2, \dots, A_{k-2}, a, a_k, \dots, a_{n-1}) = \top\}|}{|A_{k-1}|} \geq 1-\varepsilon.
\]
Hence $\hat{E}^\id_{\varepsilon}(A', A_1, \dots, A_{k-1}, a_k, \dots, a_{n-1}) = \top$. 

The case when $\hat{E}^\id_{\varepsilon}(A, A_1, \dots, A_{k-1}, a_k, \dots, a_{n-1}) = \bot$ is identical. 
\end{proof}

Now we want to show that if our $\varepsilon$-excellent set is sufficiently large then a uniformly random equitable partition will be $(\varepsilon+\zeta)$-excellent with high probability, for some $\zeta$.

\begin{lemma}
\label{Lemma: Polynomial bound on number of bad sets}
If $\varphi(\x;\y)$ has the non-$\ordp$-order property in a structure $\M$ then for any finite $A \subseteq \M$ with $|A| > 2$, 
\[
|\{\{\a\in A\st \varphi(\a, \b)\} \st \b \in \M\}| \leq |A|^\ordp.
\]
\end{lemma}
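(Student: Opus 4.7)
\medskip

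\noindent\textbf{Proof plan.} The plan is to argue by induction on $\ordp$. For the base case $\ordp = 1$, note that the non-$1$-order property forbids any pair $(\a_0,\b_0)$ with $\M\models\varphi(\a_0;\b_0)\Leftrightarrow 0<0$, i.e., any pair with $\neg\varphi(\a_0;\b_0)$. Hence $\M\models\varphi(\a;\b)$ for all admissible $\a,\b$, so the only $\varphi$-trace on $A$ is $A$ itself, giving $|S_\varphi(A)|=1\le|A|$.

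For the inductive step, I would assume the bound for $\ordp-1$ and argue by contrapositive: supposing $|\{\{\a\in A\st \M\models\varphi(\a;\b)\}\st \b\in \M\}|>|A|^\ordp$, I will build a witness to the $\ordp$-order property. Pick parameters $\b_1,\dots,\b_N$ realizing pairwise distinct $\varphi$-traces on $A$ with $N>|A|^\ordp$; viewing these traces as a family $\mathcal{F}\subseteq 2^A$ of size $>|A|^\ordp$, the goal becomes extracting a \emph{staircase} $\a_0,\dots,\a_{\ordp-1}\in A$ and $F_0,\dots,F_{\ordp-1}\in\mathcal{F}$ with $\a_i\in F_j$ iff $i<j$, since such a staircase, reinstated through the chosen $\b_j$'s realizing the $F_j$'s, is exactly the $\ordp$-order property for $\varphi$.

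The staircase will be produced iteratively: at stage $k$, given a sub-family $\mathcal{F}_k\subseteq\mathcal{F}$ of size $>|A|^{\ordp-k}$ that still distinguishes elements of $A\setminus\{\a_0,\dots,\a_{k-1}\}$, find some $\a_k\in A\setminus\{\a_0,\dots,\a_{k-1}\}$ by pigeon-hole whose two-way split of $\mathcal{F}_k$ loses only a factor of $|A|$, yielding $\mathcal{F}_{k+1}\subseteq\mathcal{F}_k$ with $|\mathcal{F}_{k+1}|>|A|^{\ordp-k-1}$; the split direction is chosen so that $\a_k\in F$ iff $F$ is in the ``future'' part of $\mathcal{F}_{k+1}$, ensuring the staircase pattern holds for all indices already placed. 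After $\ordp$ such steps, the inductive hypothesis applied to the restricted formula rules out the final configuration if no $\ordp$-staircase exists, contradicting $|\mathcal{F}|>|A|^\ordp$.

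The principal obstacle is carrying out the pigeon-holing so that each chosen pivot $\a_k$ simultaneously (i) splits $\mathcal{F}_k$ into pieces of the right polynomial sizes and (ii) lies on the correct side with respect to \emph{all} previously chosen $F_j$'s and $\a_i$'s; losing only a factor of $|A|$ per stage (rather than, say, $2$) requires using the many distinguishing coordinates of $\mathcal{F}_k$ and selecting $\a_k$ to witness the right polarity. This is the classical Shelah polynomial bound for stable formulas, and the argument follows \cite{Classification-theory}; the result can also be viewed as a dual Sauer--Shelah bound in which the $\ordp$-order property plays the role of a forbidden shattering pattern.
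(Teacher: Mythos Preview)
The paper's own proof is the single sentence ``This is immediate from \cite[Theorem~II.4.10(4)]{Classification-theory}.'' Your proposal takes the same route: you sketch the classical Shelah polynomial-bound construction of an order-property witness from too many $\varphi$-traces, and then defer to the same reference for the details you flag as the ``principal obstacle.'' Since both the paper and your proposal ultimately rest on the cited theorem, the approaches coincide; you have simply unpacked more of the standard argument than the paper chooses to.

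One small comment on the sketch itself: the phrase ``loses only a factor of $|A|$'' at each pigeon-hole step is not quite the right accounting for this argument, and the two-sided constraint you identify (that each pivot $\a_k$ must sit correctly with respect to both earlier $F_j$'s and later $\a_i$'s) is exactly the point where the naive greedy construction breaks down. The classical proof handles this differently --- essentially via a rank argument rather than a direct staircase extraction --- which is why both you and the paper ultimately cite Shelah rather than writing it out.
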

\begin{proof}
	This is immediate from \cite[Theorem~II.4.10(4)]{Classification-theory}.
\end{proof}

The following result provides an upper bound on the probability that the fraction of elements satisfying property $S$ will be more than the expected value by an additive constant $t$. 

\begin{proposition}[{\cite{hypergeometric}}]
\label{Proposition: Bound on the cap of the hypergeometric}
Suppose we have $N$ elements of which $K$ have a property $S$. Let $H(n, N, K)$ be the random variable which selects without replacement $s$ elements and returns the number which have property $S$. Then for any $t > 0$ we have 
\[
\Pr\left[\frac{H(s, N, K)}{s} \geq \frac{K}{N} + t\right] \leq e^{-2t^2 s}.
\]
\end{proposition}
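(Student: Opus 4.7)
The plan is to prove this via the Chernoff--Hoeffding method, adapted to sampling without replacement. Write $X_i$ for the indicator that the $i$th drawn element (sampled without replacement from the $N$-element ground set) has property $S$, so $H(s,N,K) = \sum_{i=1}^s X_i$ and $E[X_i] = K/N =: p$. For any $\lambda > 0$, Markov's inequality applied to the exponential moment gives
\[
\Pr\!\left[H \geq s(p+t)\right] \;\leq\; e^{-\lambda s(p+t)} \cdot E\!\left[e^{\lambda H}\right].
\]
The strategy is to bound the moment generating function on the right by the corresponding quantity for i.i.d.\ Bernoulli($p$) samples, and then invoke the standard Hoeffding argument.

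First I would establish the key reduction: if $Y_1,\dots,Y_s$ are i.i.d.\ Bernoulli($p$) (sampling \emph{with} replacement), then for every convex $\phi\colon\Reals\to\Reals$,
\[
E\!\left[\phi(X_1 + \cdots + X_s)\right] \;\leq\; E\!\left[\phi(Y_1 + \cdots + Y_s)\right].
\]
This is Hoeffding's comparison theorem for sampling without replacement, and the cleanest proof is by an exchange argument: condition on the multiset of sampled values and use the fact that a uniformly random permutation of an urn's contents is a mixture of the i.i.d.\ draws, so that applying Jensen across the conditioning gives the claim. Applying this to the convex function $\phi(x) = e^{\lambda x}$ reduces our task to bounding $E[e^{\lambda \sum Y_i}] = (1-p+p e^\lambda)^s$.

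Next I would bound each factor via Hoeffding's lemma for a Bernoulli variable, which is the standard calculation: $\log(1-p+p e^\lambda) \leq \lambda p + \lambda^2/8$ for all $\lambda \in \Reals$, obtained by showing that $\psi(\lambda) = \log(1-p+p e^\lambda)$ satisfies $\psi(0) = 0$, $\psi'(0) = p$, and $\psi''(\lambda) \leq 1/4$ uniformly in $\lambda$. This yields
\[
E\!\left[e^{\lambda H}\right] \;\leq\; \exp\!\left(\lambda p s + \tfrac{\lambda^2 s}{8}\right),
\]
and substituting back gives $\Pr[H \geq s(p+t)] \leq \exp(-\lambda t s + \lambda^2 s/8)$. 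Optimizing over $\lambda$ (the minimum occurs at $\lambda = 4t$) produces the claimed bound $e^{-2t^2 s}$.

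The main obstacle is the convex-domination step, since the $X_i$ are neither independent nor identically distributed as draws proceed. I expect the cleanest justification to go through Hoeffding's original argument: write the hypergeometric sum as an average over all orderings of a fixed multiset, compare to the corresponding average for i.i.d.\ samples, and apply Jensen's inequality to the convex function. Everything downstream (Hoeffding's lemma and the optimization over $\lambda$) is routine, so the whole result is really an application of Hoeffding's 1963 framework, which is why the proposition is simply cited to \cite{hypergeometric}.
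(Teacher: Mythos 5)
The paper does not give a proof of this proposition; it is stated as a citation to Skala's survey note \cite{hypergeometric}, which in turn traces back to Hoeffding (1963). Your proposal correctly reconstructs the standard argument behind that citation: Chernoff bound on the moment generating function, Hoeffding's convex-domination theorem to reduce sampling without replacement to i.i.d.\ Bernoulli draws, Hoeffding's lemma $\log(1-p+pe^{\lambda}) \le \lambda p + \lambda^2/8$ (verified via $\psi''(\lambda)=u(1-u)\le 1/4$), and optimization at $\lambda = 4t$ to obtain $e^{-2t^2 s}$. All steps check out, and this is precisely the proof the authors are implicitly appealing to by citing \cite{hypergeometric}.
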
 

For our purposes we will have an $\varepsilon$-excellent set $A$ and we will want to sample a random partition $P$ of $A$. We will then want to ask the following question, for a given part $p \in P$, a given relation $E$ and a given collection of good sets $B_1, \dots, B_{\arity(E)-1}$: What is the probability that the statement ``the fraction of elements of $p$ which disagree with $A$ on the value of $E$ with respect to $B_1, \dots, B_{\arity(E)-1}$ is greater than $\varepsilon + \zeta$'' is true?

Now, \cref{Proposition: Bound on the cap of the hypergeometric} tells us that not only is this probability small, but even if we were to ask polynomially many such questions, the probability that any of them would hold is (asymptotically) small. But we also know by \cref{Lemma: Polynomial bound on number of bad sets} that there exist only polynomially many such questions, hence the probability that any of them hold is (asymptotically) small. But if none of the questions holds of $p$ then we know $p$ is $(\varepsilon+ \zeta)$-excellent, which was our goal. We will now make this precise.

\begin{proposition}
\label{Proposition: Random partition of excellent set}
Consider a population with $N$ elements. Let $M_0, \dots, M_{k}$ be subsets of the population where $k = C N^{\ell}$ for constants $C$ and $\ell$, and suppose that $r$ divides $N$. Then for any $t > 0$, so long as $r\log r  + \log C  < 2t^2 N - r\ell \log N$,
	there is an equitable partition of $N$ into $r$ parts such that for each part $X$ of the partition, we have
\[
\frac{|M_i\cap X|}{|X|} \leq \frac{|M_i|}{N} + t
\]
	whenever $0 \le i \le k$.
\end{proposition}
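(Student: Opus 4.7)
The plan is to use the probabilistic method: show that a uniformly random equitable partition satisfies the desired property with positive probability. Since $r$ divides $N$, there is a well-defined uniform distribution on the set of equitable partitions of the population into $r$ parts, each of size exactly $N/r$. Sample $P = \{X_0, \dots, X_{r-1}\}$ from this distribution; it suffices to show that with positive probability, every inequality in the statement holds simultaneously for every part.

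The key point is that for any fixed part $X_j$, the marginal distribution of $X_j$ is uniform on the $\binom{N}{N/r}$ subsets of size $N/r$ of the population. Consequently, for each fixed $0 \le i \le k$, the random variable $|M_i \cap X_j|$ has exactly the hypergeometric distribution $H(N/r, N, |M_i|)$ appearing in the statement of the previous proposition. Applying that tail bound with $s = N/r$, $K = |M_i|$, yields
\[
\Pr\!\left[\frac{|M_i \cap X_j|}{|X_j|} \geq \frac{|M_i|}{N} + t\right] \;\leq\; e^{-2 t^2 N / r}
\]
for each pair $(i,j)$, independently of how the parts interact with each other.

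I then take a union bound over all $r$ parts and all $k+1 \leq 2 C N^{\ell}$ distinguished subsets. The total failure probability is at most
\[
r \cdot (k+1) \cdot e^{-2 t^2 N / r},
\]
and it suffices to verify that the hypothesis $r \log r + \log C < 2 t^2 N - r\ell \log N$ (together with benign absorption of lower-order constants into the inequality) makes this quantity strictly less than $1$. Once that arithmetic is carried out, at least one equitable partition $P$ must simultaneously satisfy all the deviation inequalities, which is exactly the conclusion.

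The only nontrivial step is the union-bound bookkeeping: matching the explicit quantitative hypothesis on $r$, $C$, $\ell$, $N$, and $t$ against the exponent $-2 t^2 N / r$ and the factor $r(k+1)$, using $k = C N^{\ell}$ and converting between natural and base-$2$ logarithms as needed. Everything else is automatic once one recognizes that the marginal of a single part of a uniformly random equitable partition is exactly a uniform $(N/r)$-subset, which places us squarely in the setting of the given hypergeometric tail bound.
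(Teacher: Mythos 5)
Your proposal is correct and follows essentially the same route as the paper's proof: treat the marginal of each part of a uniformly random equitable partition as a uniform $(N/r)$-subset, apply the hypergeometric tail bound of \cref{Proposition: Bound on the cap of the hypergeometric}, union-bound over all $r$ parts and all $M_i$'s, and then check that the stated arithmetic hypothesis drives the total failure probability below $1$.
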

\begin{proof}
By \cref{Proposition: Bound on the cap of the hypergeometric},
\[
\Pr\left[\bigvee_{ i \leq k}\left(\frac{H(N/r, N, M_i)}{N/r} \geq \frac{|M_i|}{N} + t\right)\right] \leq C \cdot N^\ell \cdot e^{-2t^2 N/r}.
\]
If $P$ is a uniformly random partition then for any $p \in P$ and $i \leq k$, the probability that $p$ contains at least $h$ many elements in $M_i$ is $\Pr[H(N/r, N, M_i) \geq h]$. 
Hence we have
\[
\Pr\left[\bigvee_{p \in P}\bigvee_{ i \leq k}\left(\frac{|p \cap M_i|}{|p|} \geq \frac{|M_i|}{N} + t\right)\right] \leq r \cdot C \cdot N^\ell \cdot e^{-2t^2 N/r}.
\]
But if $r\log r  + \log C   < 2t^2 N - r\ell \log N$, we then have 
	\[
		\Pr\left[\bigvee_{p \in P}\bigvee_{ i \leq k}\left(\frac{|p \cap M_i|}{|p|} \geq \frac{|M_i|}{N} + t\right)\right] < 1,
		\]
	and so there must be some such partition $P$ of $N$. 
\end{proof}

Putting these all together we have the following. 

\begin{proposition}
\label{Dividing epsilon-excellent classes into uniform excellent classes}
Let $\varepsilon$, $\zeta > 0$.  Suppose $A$ is an $\varepsilon$-excellent class, and $r\in \Nats$ is such that $r$ divides $|A|$.  Further, suppose
\[
	r\log r  + \log(2|\Lang|(q_\Lang!)) < 2\zeta^2 |A| - r 2^{\brap+1}\log|A|.
\]
Then there is an equitable partition of $A$ into $r$ parts, each of which is $(\varepsilon+\zeta)$-excellent. 
\end{proposition}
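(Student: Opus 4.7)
The plan is to apply \cref{Proposition: Random partition of excellent set} to $A$, with distinguished subsets chosen to be the witnessing slices of $A$ coming from the $\varepsilon$-partial relations parametrized by elements of $M$. A random equitable $r$-partition of $A$ will then, with positive probability, agree with $A$ in density up to $\zeta$ on each such slice, and an adaptation of \cref{(epsilon k)-good by proxy} with a $\zeta$-slack in the tolerance will upgrade this approximate agreement to $(\varepsilon+\zeta)$-excellence of every part.

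\textbf{Carrying out the steps.} For each $E \in \Lang$ of arity $n$, each permutation $\sigma$ of $\{0,\dots,n-1\}$, each $\b \in M^{n-1}$, and each $\delta \in \{\top,\bot\}$, let $T^\delta_{E,\sigma,\b} \subseteq A$ be the set of $a \in A$ for which $\M \models E$ of the tuple formed by $a$ and $\b$ in the order prescribed by $\sigma$. Since $\M$ has the non-$\brap$-branching property, the relevant instances of \cref{Lemma: Branching property from order property} supply non-order properties for the formulas in question, and \cref{Lemma: Polynomial bound on number of bad sets} then bounds the number of distinct $T^\delta_{E,\sigma,\b}$ (for fixed $(E,\sigma,\delta)$, as $\b$ varies) by $|A|^{2^{\brap+1}}$; summing over at most $|\Lang|(q_\Lang!)$ choices of $(E,\sigma)$ and the two values of $\delta$ gives a total count of at most $2|\Lang|(q_\Lang!) \cdot |A|^{2^{\brap+1}}$. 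Next, apply \cref{Proposition: Random partition of excellent set} with $N = |A|$, $t = \zeta$, $C = 2|\Lang|(q_\Lang!)$, and $\ell = 2^{\brap+1}$: the hypothesis of the present proposition is exactly what is required, and we obtain an equitable $r$-partition $P$ of $A$ such that
\[
\frac{|p\cap T|}{|p|} \le \frac{|T|}{|A|} + \zeta
\]
for every $p \in P$ and every distinguished $T$; including also the complements $A\setminus T$ (at worst doubling the constant) yields the two-sided bound $\bigl||p\cap T|/|p| - |T|/|A|\bigr| \le \zeta$. Finally, one argues by induction on $\ell$ (following the template of \cref{(epsilon k)-good by proxy}) that each $p \in P$ is $(\varepsilon+\zeta,\ell,E)$-good for every $E \in \Lang$: when $\ell = 1$, if $\hat{E}^{\<j\>}_\varepsilon(A,\b) = \top$ then by definition the $A$-density of witnesses is at least $1-\varepsilon$, so the $p$-density is at least $1-\varepsilon-\zeta$ and $\hat{E}^{\<j\>}_{\varepsilon+\zeta}(p,\b) = \top$; the inductive step propagates this through the nested polling in the definition of $\hat{E}^\mbar_\varepsilon$, using that each such poll ultimately reduces to $M$-element queries controlled by Step~2.

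\textbf{Main obstacle.} The delicate part is the final inductive step, because $(\varepsilon,\ell,E)$-goodness is defined via polling over arbitrary other good sets $A_1,\dots,A_{\ell-1}$, whereas Step~2 only directly controls $p$ on $M$-element queries. The crucial observation that rescues the argument is monotonicity: the set of parameters on which the inner poll over $p$ returns $\top$ at tolerance $\varepsilon+\zeta$ \emph{contains} the set of parameters on which the inner poll over $A$ returns $\top$ at tolerance $\varepsilon$. Thus an outer poll that succeeded on $A$ at tolerance $\varepsilon$ automatically succeeds on $p$ at tolerance $\varepsilon+\zeta$; the $\zeta$-slack is spent only once, at the base of the recursion, and does not compound as the induction climbs through the levels.
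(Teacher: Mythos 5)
Your proposal is correct and is essentially the paper's own argument, which is stated much more tersely. You define the same family of distinguished subsets (the paper calls them $M_0,\dots,M_k$; you call them $T^\delta_{E,\sigma,\b}$), obtain the same bound $k \le 2|\Lang|(q_\Lang!)\,|A|^{2^{\brap+1}}$ via \cref{Lemma: Branching property from order property} and \cref{Lemma: Polynomial bound on number of bad sets}, feed the same parameters $N=|A|$, $t=\zeta$, $C=2|\Lang|(q_\Lang!)$, $\ell = 2^{\brap+1}$ into \cref{Proposition: Random partition of excellent set}, and then invoke \cref{(epsilon k)-good by proxy} to propagate excellence from $A$ to each part.

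One small remark on exposition: the ``main obstacle'' you raise is not really an obstacle once \cref{(epsilon k)-good by proxy} is applied at the correct tolerance. Because $A$ is $\varepsilon$-excellent, it is also $(\varepsilon+\zeta,k,E)$-good for all $k$ by the monotonicity lemma; the random-partition bound (applied to each $T$ and its complement) gives $\hat{E}_{\varepsilon+\zeta}(p,\b) = \hat{E}_{\varepsilon+\zeta}(A,\b)$ for every $M$-parameter tuple $\b$ and every permutation; and then \cref{(epsilon k)-good by proxy}, taken verbatim with tolerance $\varepsilon+\zeta$, immediately yields that $p$ is $(\varepsilon+\zeta,k,E)$-good for all $k$ and $E$. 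There is no need to re-run the induction or worry about the $\zeta$-slack compounding through nested polls --- the lemma already encapsulates that recursion. Your argument arrives at the same place, but routes through a re-derivation of the lemma rather than a direct citation.
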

\begin{proof}
Let $M_0, \dots, M_k$ be sets of the form 
\[
	\{a_0 \in A\st \M\models E(a_{\sigma(0)}, \dots, a_{\sigma(\ell-1)})\}
\]
or of the form 
\[
	\{a_0 \in A\st \M\models \neg E(a_{\sigma(0)}, \dots, a_{\sigma(\ell-1)})\}
\]
for some $E \in \Lang$, some $a_1, \dots, a_{\ell-1} \in M$, and some permutation $\sigma$ of $\{0, \ldots, \ell-1\}$, where $\ell \defas \arity(E)$. 
	Then by
\cref{Lemma: Branching property from order property},
	$\M$ has the non-$2^{\brap+1}$ order property.
	Hence by \cref{Lemma: Polynomial bound on number of bad sets}, we have $k \leq 2 |\Lang|\cdot q_\Lang! \cdot |A|^{2^{\brap+1}}$. The result then follows immediately from 
	\cref{(epsilon k)-good by proxy}
	and
	\cref{Proposition: Random partition of excellent set}.
\end{proof}

\begin{proposition}
\label{Proposition: Equitable excellent partition}

Let $\zeta > 0$. 
	Suppose $0 < \varepsilon < 2^{-\brap} \cdot n_\Lang^{-1}$, and that
\begin{itemize}
\item[(a)] $\M$ does not have the $\brap$-branching property,

\item[(b)] $g \defas \lceil 5 \cdot n_\Lang \cdot \brap \cdot \log \brap \rceil$,

\item[(c)] $m$ is a positive natural number such that $m \cdot \lfloor\frac{1}{\varepsilon}\rfloor^g \leq |M|$, and

\item[(d)] $2\zeta^2 m - \frac{|M|}{m}\, {2^{\brap+1}}\log m   > 
	\frac{|M|}{m}\,\log \frac{|M|}{m} + \log (2|\Lang|(q_\Lang!))$.
\end{itemize}

Then there is a subset $M^+ \subseteq M$ and a partition $P$ of $M^+$ such that 

\begin{itemize}
\item[(i)] $|M\setminus M^+| <  m \cdot \lfloor\frac{1}{\varepsilon}\rfloor^g$,

\item[(ii)] each part of $P$ is $(\varepsilon+\zeta)$-excellent,

\item[(iii)] $P$ is equitable, and

\item[(iv)] each part of $P$ has size $m$.
\end{itemize}

\end{proposition}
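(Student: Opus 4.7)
The plan is to combine \cref{Proposition: Partition of excellent sets} with \cref{Dividing epsilon-excellent classes into uniform excellent classes}: first partition most of $M$ into $\varepsilon$-excellent pieces whose sizes form a geometric progression with common ratio $\lfloor 1/\varepsilon\rfloor$ and smallest term $m$, then equitably subdivide each such piece into blocks of size exactly $m$ at the cost of degrading excellence from $\varepsilon$ to $\varepsilon+\zeta$.

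First set $r \defas \lfloor 1/\varepsilon\rfloor$ and take the staircase $\mbf{m} \defas \<m_i\>_{0 \le i \le g}$ with $m_i \defas m r^{g-i}$, so that $m_g = m$, $m_0 = m r^g$, $m_i/m_{i+1} = r$ for all $0 \le i < g$, and $m_{i+1}/m_i = 1/r \le \varepsilon$. Hypothesis~(c) gives $|M| \ge m_0$, and together with hypotheses~(a) and~(b) this is exactly what \cref{Proposition: Partition of excellent sets} requires; applying it produces a subset $M^* \subseteq M$ and a partition $P^*$ of $M^*$ with $|M \setminus M^*| < m_0 = m \lfloor 1/\varepsilon\rfloor^g$, whose parts are $\varepsilon$-excellent with sizes in $\mbf{m}$. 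Take $M^+ \defas M^*$, which already gives conclusion~(i).

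To refine $P^*$ to blocks of uniform size $m$, fix $A \in P^*$, so $|A| = m r^{g-i}$ for some $i$ and $m \mid |A|$. When $i = g$, i.e.\ $|A| = m$, leave $A$ as a single block: it is $(\varepsilon+\zeta)$-excellent by the lemma that excellence is preserved when the tolerance is weakened. When $i < g$, apply \cref{Dividing epsilon-excellent classes into uniform excellent classes} to $A$ with its parameter ``$r$'' taken to be $|A|/m$; the required inequality
\[
(|A|/m)\log(|A|/m) + \log(2|\Lang|(q_\Lang!)) < 2\zeta^2 |A| - (|A|/m)\cdot 2^{\brap+1}\log|A|
\]
follows from hypothesis~(d) upon bounding each term using $m \le |A| \le |M|$ and $|A|/m \le |M|/m$. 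This yields an equitable partition of $A$ into $|A|/m$ blocks of size $m$, each $(\varepsilon+\zeta)$-excellent. Letting $P$ be the union over $A \in P^*$ of these sub-partitions, every part of $P$ has size exactly $m$, so $P$ is a partition of $M^+$ that is trivially equitable (conclusions~(iii) and~(iv)) and consists of $(\varepsilon+\zeta)$-excellent parts (conclusion~(ii)).

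The only nontrivial step is verifying that~(d) does imply the hypothesis of \cref{Dividing epsilon-excellent classes into uniform excellent classes} uniformly over all part sizes $|A| \in \mbf{m}$; this is a direct monotonicity comparison in $|A|$ and in the ratio $|A|/m$, and is the only place where the specific numerical shape of~(d) is used. Everything else is bookkeeping that assembles the two already-proved propositions.
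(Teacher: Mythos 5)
Your proof is correct and takes essentially the same route as the paper: apply \cref{Proposition: Partition of excellent sets} with the staircase $m_i = m\lfloor 1/\varepsilon\rfloor^{g-i}$ to obtain an $\varepsilon$-excellent partition of a large $M^+ \subseteq M$ whose part sizes are multiples of $m$, then refine each part into blocks of size exactly $m$ via \cref{Dividing epsilon-excellent classes into uniform excellent classes}. The monotonicity verification you flag---that hypothesis~(d) implies the hypothesis of that lemma uniformly across all part sizes $|A|\in\{m,\,m\lfloor 1/\varepsilon\rfloor,\,\dots,\,m\lfloor 1/\varepsilon\rfloor^g\}$---is precisely how the paper invokes~(d) as well, using the bounds $|A|/m\le |M|/m$ and $|A|\ge m$.
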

\begin{proof}
Let $m_g = m$ and let $m_{i - 1} = m_i \cdot \lfloor\frac{1}{\varepsilon}\rfloor$ for $1\le i \le g$. By assumption (c) we have that $|M| \geq m_0$. Using assumptions (a) and (b) we can apply \cref{Proposition: Partition of excellent sets} to get a $M^+ \subseteq M$ and $P^+$ which satisfies (i), where each part of $P^+$ is $\varepsilon$-excellent, and where $m$ divides the size of each part of $P^+$. Note that the size $r$ of the partition $P^+$ is bounded above by $\frac{M}{m}$ and the size of any such partition is bounded below by $m$. Hence by applying (d), 
	we obtain
\[
	2\zeta^2 |p| - r{2^{\brap+1}}\log |p|   > r\log r  + \log(2|\Lang|(q_\Lang!))
\]
	for any part $p \in P^+$,
and so we can apply \cref{Dividing epsilon-excellent classes into uniform excellent classes} to find a refinement $P$ of $P^+$ which is equitable and where every part is $(\varepsilon+\zeta)$-excellent. 
\end{proof}

Finally, now that we have an equitable partition of a large subset of our graph, each of whose parts is appropriately excellent, we are able to prove one of our main results. 

\begin{theorem}
\label{Theorem: Stable Regularity for Relational Structures with choice of partition size}
Let $\zeta, \eta > 0$ and let $m \defas \lceil|M|\cdot \eta\rceil > 2$. 
	Suppose $0 < \varepsilon < 2^{-\brap} \cdot n_\Lang^{-1}$, and that
\begin{itemize}
\item[(a)] $\M$ does not have the $\brap$-branching property, 

\item[(b)] $g \defas \lceil 5 \cdot n_\Lang \cdot \brap \cdot \log \brap \rceil$,

\item[(c)] $\beta \defas \varepsilon^g - (\eta  + \frac{1}{|M|}) > 0$, and

\item[(d)] $2\zeta^2\eta m - {2^{\brap+1}}\log m   > \eta\log(2|\Lang|(q_\Lang!)) - \log \eta$.
\end{itemize}

Then there is an $\Lang$-structure $\N$ with the same underlying set $M$ as $\M$ and an equitable partition $P^*$ of $\N$ such that for all $E \in \Lang$,

\begin{itemize}
\item for all $\<p^*_i\>_{i < \ell} \subseteq P^*$, 
\[
\left|(E^\M \SymDiff E^\N) \cap \prod_{i < \ell} p^*_i\right| \leq \ell \cdot   \left(\frac{(\varepsilon + \zeta) \cdot \beta + \eta}{\beta}\right) \cdot \prod_{i < \ell}|p_i|,
\]
    
\item $P^*$ is indivisible,  and

\item $\frac{\beta}{\varepsilon^g\cdot \eta} \leq |P^*| \leq \frac{1}{\eta} +1$,
\end{itemize}
	where $\ell \defas \arity(E)$.
\end{theorem}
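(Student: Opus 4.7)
The plan is to chain together three results already established in the paper: first apply \cref{Proposition: Equitable excellent partition} to obtain an equitable partition of a large subset $M^+ \subseteq M$ into $(\varepsilon+\zeta)$-excellent parts of size exactly $m$; then use \cref{Changes in edges due to excellence needed to get indivisible} to modify each relation on $M^+$ so that this partition becomes indivisible there; and finally distribute the remaining elements of $M\setminus M^+$ across the parts to produce an equitable partition $P^*$ of all of $M$ with respect to which the further-modified structure $\N$ is indivisible.

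First I would verify the hypotheses of \cref{Proposition: Equitable excellent partition} with the same $\varepsilon$, $\zeta$, and $m$. Theorem hypothesis (c), $\beta > 0$, together with $\lfloor 1/\varepsilon\rfloor \le 1/\varepsilon$, yields $m\cdot\lfloor 1/\varepsilon\rfloor^g \le m\varepsilon^{-g} < |M|$, which is the proposition's hypothesis (c). Theorem hypothesis (d), combined with $|M|/m \le 1/\eta$ (from $m \ge |M|\eta$), implies the proposition's hypothesis (d). The proposition then produces $M^+ \subseteq M$ and an equitable partition $P$ of $M^+$, each part $(\varepsilon+\zeta)$-excellent of size exactly $m$, with $|M\setminus M^+| < m\lfloor 1/\varepsilon\rfloor^g$.

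Next, for each $E \in \Lang$ of arity $\ell$, I would apply \cref{Changes in edges due to excellence needed to get indivisible} to the induced substructure on $M^+$ with partition $P$ and tolerance $\varepsilon+\zeta$, obtaining a relation $E^{**}$ on $M^+$ such that $P$ is indivisible for $E^{**}$ and $|(E^\M \SymDiff E^{**}) \cap \prod_{i<\ell} p_i| \le \ell(\varepsilon+\zeta)\prod_{i<\ell}|p_i|$ for all tuples from $P$. I then distribute the $k \defas |M\setminus M^+|$ leftover elements arbitrarily into the parts of $P$ so that each part receives either $\lfloor k/|P|\rfloor$ or $\lceil k/|P|\rceil$ new elements; the resulting partition $P^*$ of $M$ is equitable. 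Extending each $E^{**}$ to a relation $E^\N$ on $M$ by assigning, on each product of parts $\prod_{i<\ell} p^*_i$ of $P^*$, the common value of $E^{**}$ on the corresponding parts $p_i \defas p^*_i \cap M^+$ (well-defined by indivisibility of $P$ for $E^{**}$), the structure $\N \defas (M, E^\N)_{E\in\Lang}$ then has $P^*$ as indivisible partition by construction.

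For the size bound, $|P^*| = |P| = |M^+|/m$; the upper bound $|P^*| \le 1/\eta + 1$ follows from $|M^+| \le |M|$ and $m \ge |M|\eta$, while the lower bound $|P^*| \ge \beta/(\varepsilon^g \eta)$ follows from $|M^+| > |M| - m\varepsilon^{-g}$ combined with $m \le |M|\eta + 1$ and the definition $\beta = \varepsilon^g - \eta - 1/|M|$. The main obstacle will be the edit-distance bound. For each $\ell$-tuple $(p^*_0, \ldots, p^*_{\ell-1})$, I split $(E^\M \SymDiff E^\N) \cap \prod_{i<\ell} p^*_i$ into tuples contained in $\prod_{i<\ell} p_i$ (contributing at most $\ell(\varepsilon+\zeta)\prod|p_i|$ by the previous step, since $E^\N$ agrees with $E^{**}$ on $M^+$) and tuples meeting at least one coordinate outside some $p_i$. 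For the latter, theorem hypothesis (c) yields $\delta \defas |M\setminus M^+|/|M| \le 1 - \beta/\varepsilon^g$, whence $\delta/(1-\delta) \le (\eta+1/|M|)/\beta$; this bounds how much each $|p^*_i|$ exceeds $m$ and thereby bounds $\prod|p^*_i| - \prod|p_i|$ by essentially $\ell(\eta/\beta)\prod|p_i|$. Combining the two contributions yields the stated $\ell\cdot((\varepsilon+\zeta)\beta+\eta)/\beta\cdot\prod|p_i|$, and the technical crux lies in carrying out this final arithmetic, tracking the $1/|M|$ slack in $\beta$ and the rounding in $m = \lceil|M|\eta\rceil$ with care.
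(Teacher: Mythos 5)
Your proposal follows the paper's proof almost exactly: verify the hypotheses of \cref{Proposition: Equitable excellent partition}, apply it to obtain an equitable partition $P^+$ of $M^+$ into $(\varepsilon+\zeta)$-excellent parts of size $m$, apply \cref{Changes in edges due to excellence needed to get indivisible} to make it indivisible on $M^+$, distribute the leftover elements equitably, extend the relations so the full partition $P^*$ is indivisible, and split the edit-distance count into tuples inside $\prod_i p_i$ and those meeting $M\setminus M^+$. All of this is correct and matches the paper's route; the verification that the theorem's hypothesis (d) implies the proposition's (d) via $|M|/m \le 1/\eta$ is also the paper's argument.

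The one place where you flag a technical crux is in fact a genuine (if small) gap: the bound you derive on the fractional excess per part is $(\eta + 1/|M|)/\beta$, not the required $\eta/\beta$. Working through $\delta = |M\setminus M^+|/|M|$ forces the conversion $m/|M| \le \eta + 1/|M|$, and the ratio $\delta/(1-\delta) < (1 - \beta/\varepsilon^g)/(\beta/\varepsilon^g) = (\eta + 1/|M|)/\beta$ is tight for that route. The fix (and what the paper does) is to avoid $|M|$ entirely and instead use the identity $|M\setminus M^+|/|M^+| = |M\setminus M^+|/(m|P^*|)$: then $|M\setminus M^+| < m\lfloor 1/\varepsilon\rfloor^g$ gives a bound of $\lfloor 1/\varepsilon\rfloor^g / |P^*|$, and the lower bound $|P^*| \ge \beta/(\varepsilon^g\eta)$ — which you already derive — yields exactly $\eta/\beta$. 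In other words, the lower bound on $|P^*|$ is not only one of the conclusions to be established but is also an input to the final edit-distance estimate; routing the excess-per-part bound through it, rather than through $\delta$, eliminates the spurious $1/|M|$.
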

\begin{proof}
	First note that by (d) and the fact that $\frac{|M|}{m} \leq \eta^{-1}$, condition (d) of \cref{Proposition: Equitable excellent partition} holds. Next, $m \cdot \lfloor\frac{1}{\varepsilon}\rfloor^g \leq \lceil|M| \cdot \eta \rceil \frac{1}{\varepsilon^g} \leq (|M| \cdot \eta + 1) \frac{1}{\varepsilon^g} = |M| \cdot \frac{\eta + \frac{1}{|M|}}{\varepsilon^g} \leq  |M|$ and we so we can find a subset $M^+$ and an equitable partition $P^+$ of $M^+$ as in \cref{Proposition: Equitable excellent partition} where $|M \setminus M^+| < m \cdot \floor{\frac{1}{\varepsilon}}^g$ and each part of $P^+$ has size $m$. 

As each part of $P^+$ is $(\varepsilon+\zeta)$-excellent, by \cref{Changes in edges due to excellence needed to get indivisible} there is a structure $(M^+, E^{**})$ on the same underlying set as $M^+$ such that $P^+$ is indivisible and 
	$|(E^\M|_{M^+} \SymDiff E^{**}) \cap \prod_{i < \ell} p_i| \leq \ell \cdot (\varepsilon + \zeta) \cdot  \prod_{i < \ell} |p_i|$
	for all $p_0, \dots, p_{\ell-1} \in P$.

Finally, we can extend $P^+$ to an equitable partition $P^*$ of $M$ by adding elements of $M \setminus M^+$ arbitrarily while preserving the appropriate sizes of the parts of $P$. As $|M \setminus M^+| < m \cdot \floor{\frac{1}{\varepsilon}}^g$, we have 
\begin{align*}
|P^*| &\geq \frac{|M| - \lceil|M|\cdot \eta\rceil \cdot \floor{\frac{1}{\varepsilon}}^g}{\lceil|M|\cdot \eta\rceil} 
\geq \frac{|M| - (|M|\cdot \eta +1)\cdot \floor{\frac{1}{\varepsilon}}^g}{|M|\cdot \eta} \\
&= \frac{1 - (\eta + \frac{1}{|M|}) \cdot \floor{\frac{1}{\varepsilon}}^g}{\eta} 
\geq
\frac{1 - (\eta + \frac{1}{|M|}) \cdot (\frac{1}{\varepsilon})^g}{\eta} \\
&=
\frac{\varepsilon^g - (\eta + \frac{1}{|M|})}{\varepsilon^g \cdot \eta} 
=
\frac{\beta}{\varepsilon^g \cdot \eta}.
\end{align*}
Also note that each part of $P^*$ has size at least $m$, and so $|P^*| \leq \frac{|M|}{m} \leq \frac{1}{\eta}+1$. 

Further note that by an appropriate assignment of edge relations on $M\setminus M^+$, we can extend $E^{**}$ to an edge relation $E^\N$ such that $P^*$ is also an indivisible partition of $\N$. Let 
\[
	k^* \defas \sup\left\{\frac{|p^* \setminus p|}{|p|} \st p \in P^+, \ p^* \in P^*, \textrm{~and~} p \subseteq p^*\right\}.
\]
Then we have 
\begin{align*}
k^* &\leq \frac{\frac{m \cdot \floor{\frac{1}{\varepsilon}}^g}{|P^*|}}{ m}
=\frac{\floor{\frac{1}{\varepsilon}}^g}{|P^*|}
\leq \frac{(\frac{1}{\varepsilon})^g}{\frac{\beta}{\varepsilon^g \cdot \eta}}
= 
\frac{\eta }{\beta}.
\end{align*}

Let $X_0$ be the collection of $\ell$-tuples at least one element of which is contained in $M\setminus M^+$. Suppose $p_0^*, \dots, p_{\ell-1}^* \in P^*$. We then have 
\begin{align*}
	\Bigl|\bigl((E^\M \cap X_0) \,\SymDiff\, (E^\N \cap X_0)\bigr) \,\cap \,\prod_{i < \ell} p_i^*\Bigr| & \leq\bigl|X_0 \cap \prod_{i < \ell} p_i^*\bigr| \\
&\leq \ell \cdot k^* \cdot \prod_{i < \ell}|p_i^*|\\
&\leq \ell \cdot \frac{\eta}{\beta}
 \cdot \prod_{i < \ell}|p_i^*|.
\end{align*}

Putting this together we get 
\begin{align*}
\Bigl|(E^\M \SymDiff E^\N ) \cap \prod_{i < \ell} p_i^*\Bigr| &\leq \ell \cdot (\varepsilon + \zeta) \cdot  \prod_{i < \ell}|p_i^*| + \ell \cdot \frac{\eta}{\beta}
\cdot \prod_{i < \ell}|p_i^*| \\
&\leq \ell \cdot  \left(\varepsilon + \zeta +\frac{\eta}{\beta}\right) \cdot \prod_{i < \ell}|p_i^*| \\
&\leq \ell \cdot  \left(\frac{(\varepsilon + \zeta) \cdot \beta + \eta}{\beta}\right) \cdot \prod_{i < \ell}|p_i^*|.
\end{align*}
\end{proof}

There is a tension among the three parameters $\varepsilon$, $\eta$, and $\zeta$. Namely, as $\eta$ becomes smaller, the potential size of the partition becomes larger, but at the same time, the fraction of elements that we need to change becomes smaller. On the other hand, as $\varepsilon$ becomes smaller, both the potential partition size and the number of elements we need to change become larger. Finally, $\zeta$ must be chosen to as to be consistent with the other two parameters in (d); in particular, as $\eta$ becomes smaller, $\zeta$ must get larger.

While \cref{Theorem: Stable Regularity for Relational Structures with choice of partition size} provides precise lower bounds on how large a structure we need in order for stable regularity to come into play, these bounds can be unwieldy. If instead we are willing to simply consider ``sufficiently large'' structures then the result has a much cleaner form.

\begin{theorem}[Stable regularity for finite relational structures]
\label{Theorem: Stable Regularity for Relational Structures}
	Let $\M$ be a finite $\Lang$-structure with underlying set $M$, and 
define $g \defas \lceil 5 \cdot n_\Lang \cdot \brap \cdot \log \brap \rceil$. 
	Suppose $0 < \varepsilon < 2^{-(g+1)(g+2)}$.
	Then there is some $k_\varepsilon$ such that if
	$|M|\geq k_\varepsilon$ and
	$\M$ does not have the $\brap$-branching property, 
then there is an $\Lang$-structure $\N$ with the underlying set $M$, and an equitable partition $P$ of $\N$, such that 
	for all $E \in\Lang$,
\begin{itemize}
\item for all $\<p_i\>_{i < \ell} \subseteq P$,
\[
\Bigl|(E^\M \SymDiff E^\N) \cap \prod_{i < \ell} p_i\Bigr| \leq \ell \cdot \varepsilon \cdot \prod_{i < \ell}|p_i|,
\]

\item $P$ is indivisible, and

\item $|P| \leq \varepsilon^{-g-2}$,
\end{itemize}
	where $\ell \defas \arity(E)$.
\end{theorem}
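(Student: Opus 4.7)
The plan is to derive this cleaner statement from \cref{Theorem: Stable Regularity for Relational Structures with choice of partition size} by a careful choice of the parameters appearing there. To avoid a clash of notation, I will write $\varepsilon_0$ for the $\varepsilon$ of the current statement, and keep $\varepsilon$, $\zeta$, $\eta$ for the inputs to the earlier theorem. The target is to produce an $\N$ and $P$ whose error is at most $\ell\cdot\varepsilon_0$ per block and whose partition size is at most $\varepsilon_0^{-g-2}$, using that the earlier theorem yields error fraction $(\varepsilon+\zeta)+\eta/\beta$ and partition size at most $1/\eta+1$, where $\beta=\varepsilon^g-(\eta+1/|M|)$.

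Specifically, I would set $\varepsilon \defas \varepsilon_0/3$, $\zeta \defas \varepsilon_0/3$, and $\eta \defas 2\varepsilon_0^{g+2}$. The exact constants $3$ and $2$ are inessential, but they make both sides of the parameter chase transparent. For the partition-size bound, $1/\eta+1=\tfrac{1}{2}\varepsilon_0^{-g-2}+1\le \varepsilon_0^{-g-2}$ as soon as $\varepsilon_0^{g+2}\le \tfrac12$, which is vastly weaker than the hypothesis $\varepsilon_0<2^{-(g+1)(g+2)}$. For the error bound, $\varepsilon+\zeta=2\varepsilon_0/3$, so it suffices to arrange $\eta/\beta\le \varepsilon_0/3$. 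Once $k_\varepsilon$ is taken large enough that, say, $1/|M|\le \eta$, we have $\beta\ge (\varepsilon_0/3)^g-4\varepsilon_0^{g+2}$, which is essentially $\varepsilon_0^g/3^g$; then $\eta/\beta\le 4\cdot 3^g\,\varepsilon_0^{2}$, and since $\varepsilon_0<2^{-(g+1)(g+2)}$ makes $\varepsilon_0$ far smaller than $1/(12\cdot 3^g)$, this yields $\eta/\beta\le \varepsilon_0/3$ as required.

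Next I would verify the remaining hypotheses of \cref{Theorem: Stable Regularity for Relational Structures with choice of partition size}. The condition $\varepsilon<2^{-\brap}n_\Lang^{-1}$ is immediate from the bound on $\varepsilon_0$, since $(g+1)(g+2)$ is much larger than $\brap+\log n_\Lang$ (recall $g=\lceil 5 n_\Lang\brap\log\brap\rceil$). Hypothesis (a) is our standing assumption, and (b) is the definition of $g$. Hypothesis (c) (positivity of $\beta$) is built into the calculation above. Finally, hypothesis (d) reads $2\zeta^2\eta m-2^{\brap+1}\log m>\eta\log(2|\Lang|(q_\Lang!))-\log\eta$, with $m=\lceil|M|\eta\rceil$; once $\varepsilon_0$ and $\Lang$ are fixed, the left side is linear in $m$ (hence in $|M|$) while the right side is logarithmic in $m$, so (d) holds once $|M|$ exceeds some threshold depending only on $\varepsilon_0$ and $\Lang$. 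Taking $k_\varepsilon$ to be the maximum of the thresholds arising from (c), (d), and the bookkeeping of the previous paragraph closes the loop.

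With these inputs, invoking \cref{Theorem: Stable Regularity for Relational Structures with choice of partition size} produces an $\Lang$-structure $\N$ on $M$ and an equitable, indivisible partition $P^*=P$ whose per-block error is bounded by $\ell\cdot((\varepsilon+\zeta)+\eta/\beta)\cdot\prod|p_i|\le \ell\cdot\varepsilon_0\cdot\prod|p_i|$, and whose cardinality is at most $1/\eta+1\le \varepsilon_0^{-g-2}$. The main obstacle is only the parameter chase itself: one must fix $\varepsilon,\zeta,\eta$ to respect simultaneously the quadratic blow-up in the exponent $(g+1)(g+2)$ and the mild tension between (c) and (d). No conceptually new idea is required beyond the earlier theorem.
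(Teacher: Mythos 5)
Your proposal is correct and follows essentially the same strategy as the paper: reduce to \cref{Theorem: Stable Regularity for Relational Structures with choice of partition size} by specializing its parameters $\varepsilon$, $\zeta$, $\eta$ as explicit powers of the target accuracy and then taking $k_\varepsilon$ large enough to kill the $1/|M|$ terms and satisfy (c), (d). The only difference is cosmetic: the paper threads in auxiliary constants $\gamma_1$ and $p$ (setting $\varepsilon_1 = \varepsilon/((p+2)\gamma_1)$, $\zeta_1 = \varepsilon_1(\gamma_1-1)$, $\eta_1 = (\varepsilon_1\gamma_1)^{g+1}$) to keep some inequalities tight, whereas you take the cleaner choices $\varepsilon = \zeta = \varepsilon_0/3$ and $\eta = 2\varepsilon_0^{g+2}$; your parameter chase — checking $\eta/\beta \le 4\cdot 3^g\varepsilon_0^2 \le \varepsilon_0/3$, that $\beta > 0$ once $1/|M| \le \eta$, and that $1/\eta + 1 \le \varepsilon_0^{-g-2}$ — is sound, given the bound $\varepsilon_0 < 2^{-(g+1)(g+2)}$.
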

\begin{proof}
	Suppose $0 < \varepsilon < 2^{-(g+1)(g+2)}$.
	We will choose $\varepsilon_1$, $\zeta_1$, $\eta_1>0$ and $k_\varepsilon\in \Nats$ in terms of $\varepsilon$ such that for all $\M$ with the 
	non-$\brap$-branching property and
	$|M|\geq k_\varepsilon$,
	we may apply 
	\cref{Theorem: Stable Regularity for Relational Structures with choice of partition size} to $\varepsilon_1$, $\zeta_1$, and $\eta_1$ to produce an $\Lang$-structure $\N$ and equitable partition $P$, which we will verify have the desired properties.

Choose $\gamma_1$ such that $1< \gamma_1 < 2$ and let $p>4$ be such that 
	$\gamma_1^g(1 + \varepsilon) < p < 2^{g+1}$ (which is possible as $g>1$, as $\gamma_1 < 2$, and as $\varepsilon <1$).
Therefore 
\[
\gamma_1^g < p-\varepsilon \cdot \gamma_1^g
\]
and so 
\[
\frac{\gamma_1^g}{p-\varepsilon \cdot \gamma_1^g}  < 1.
\]
But then we also have have 
	\[
\frac{\gamma_1^g}{1-\frac{\varepsilon}{p+1}  \cdot \gamma_1^g} 
< 
\frac{\gamma_1^g}{1-\frac{\varepsilon}{p} \cdot \gamma_1^g}
= 
p \frac{\gamma_1^g}{p-\varepsilon \cdot \gamma_1^g}
< 
p.
	\tag{A}
\label{eqn:A}
	\]

Choose
$\varepsilon_1 = \frac{\varepsilon}{(p+2)\gamma_1}$.
	In particular, we have
$\varepsilon_1 \cdot \gamma_1 < \frac{\varepsilon}{p+1}< 1$.
	Further, as $p>1$ and $\gamma_1 > 1$, we have
	$\varepsilon_1 < 	\frac{1}{1 + \gamma_1^{g+1}}$,
	and  so
$\varepsilon_1(1 + \gamma_1^{g+1}) < 1$.

Let $\zeta_1 \defas \varepsilon_1 \cdot (\gamma_1 - 1)$,
so that
$\gamma_1 = 1 + \frac{\zeta_1}{\varepsilon_1}$.

Let $\eta_1 \defas  \varepsilon_1^{g+1} \cdot \gamma_1^{g+1}
=
(\varepsilon_1 + \zeta_1)^{g+1}$.

Let $\beta \defas \varepsilon_1^g - (\eta_1 + \frac{1}{|M|})$.

	Let $k_\varepsilon$ be large enough that 
\begin{itemize}
\item[(1)] $k_\varepsilon\eta_1> 2$,

\item[(2)] $
2 k_\varepsilon
		\zeta_1^2\eta_1^2 - {2^{\brap+1}}\log \bigl(k_\varepsilon\eta_1\bigr)   > \eta_1\log(2|\Lang|(q_\Lang!)) - \log \eta_1$,

	\item[(3)] $k_\varepsilon > \frac{2^{\brap+1}}{2 \zeta_1^2 \eta_1^2}$,

\item[(4)] $\frac{\gamma_1^g}{1-\frac{\varepsilon}{p+1}  \gamma_1^g - \frac{1}{\varepsilon_1^g k_{\varepsilon}}} < p$, and

\item[(5)] $k_\varepsilon > \varepsilon_1^{-g-1}$.
\end{itemize}
(Any sufficiently large $k_\varepsilon$ satisfies (4) by \eqref{eqn:A}, and clearly (1), (2), (3), and (5) hold for all sufficiently large $k_\varepsilon$.)

	Let $m \defas  \lceil|M|\cdot \eta_1\rceil$ and let
	$\ell \defas \arity(E)$.
	We have assumed that $\M$ does not have the $\brap$-branching property. 
We now show that 
$m > 2$, that
$\beta >0$, and that
$2\zeta_1^2\eta_1 m - {2^{\brap+1}}\log m   > \eta_1\log(2|\Lang|(q_\Lang!)) - \log \eta_1$
	(so that we may apply \cref{Theorem: Stable Regularity for Relational Structures with choice of partition size}).

Note that (1) ensures that $m = \lceil|M|\cdot \eta_1\rceil > 2$. 
The function 
$2\zeta_1^2\eta_1 x - {2^{\brap+1}}\log x$ is increasing for $x > \frac{2^{\brap+1}}{2 \zeta_1^2 \eta_1}$, and so 
(2) and (3) imply that 
\[
	2\zeta_1^2\eta_1 m - {2^{\brap+1}}\log m   > \eta_1\log(2|\Lang|(q_\Lang!)) - \log \eta_1
\]
holds. 

Now
\[
\beta = \varepsilon_1^g - (\varepsilon_1\gamma_1)^{g+1} - \frac{1}{|M|} \geq \varepsilon_1^g - (\varepsilon_1\gamma_1)^{g+1} - \frac{1}{k_\varepsilon} > 
\varepsilon_1^g - (\varepsilon_1\gamma_1)^{g+1} - \varepsilon_1^{g+1},
\]
where the last inequality follows from (5). 
But 
\[
\varepsilon_1^g - (\varepsilon_1\gamma_1)^{g+1} - \varepsilon_1^{g+1} = (\varepsilon_1)^g (1-\varepsilon_1(\gamma_1^{g+1} +1)). 
\]
Recall that
$1 > \varepsilon_1(\gamma_1^{g+1} +1)$, 
and so $\beta > 0$.  
	Also note that (iv) implies $\varepsilon < 2^{-\brap} \cdot n_\Lang^{-1}$, and so $\varepsilon_1< 2^{-\brap} \cdot n_\Lang^{-1}$.

Hence we may apply
	\cref{Theorem: Stable Regularity for Relational Structures with choice of partition size} to $\varepsilon_1$, $\zeta_1$, and $\eta_1$ to obtain an $\Lang$-structure $\N$ 
	with the same underlying set $M$ as $\M$ and an equitable partition $P$ of $\N$ such that for all $E \in \Lang$,

\begin{itemize}
\item for all $\<p^*_i\>_{i < \ell} \subseteq P$, 
\[
\left|(E^\M \SymDiff E^\N) \cap \prod_{i < \ell} p^*_i\right| \leq \ell \cdot   \left(\frac{(\varepsilon_1 + \zeta_1) \cdot \beta + \eta_1}{\beta}\right) \cdot \prod_{i < \ell}|p_i|,
\]
    
\item $P$ is indivisible,  and

\item $|P| \leq \frac{1}{\eta_1} +1$.
\end{itemize}

We must show that 
$\frac{(\varepsilon_1 + \zeta_1) \cdot \beta + \eta_1}{\beta}
\le \varepsilon$
and that $\frac{1}{\eta_1} +1 \le \varepsilon^{-g-2}$.

Recall that
$\varepsilon_1 + \zeta_1  = 
\varepsilon_1\gamma_1 $.
Observe that
\begin{eqnarray*}
\frac{\eta_1}{\beta}
	&=&
	\frac{(\varepsilon_1 \gamma_1)^{g+1}}{\beta}\\
	&=&
	\frac{(\varepsilon_1 \gamma_1)^{g+1}}{\varepsilon^g - (\varepsilon_1 \gamma_1)^{g+1} - \frac{1}{|M|}} \\
	&\leq &
	\frac{(\varepsilon_1 \gamma_1)^{g+1}}{\varepsilon^g - (\varepsilon_1 \gamma_1)^{g+1} - \frac{1}{k_\varepsilon}} \\
	&=& 
	\varepsilon_1 \cdot \gamma_1\frac{\gamma_1^{g}}{1 - (\varepsilon_1 \gamma_1) \gamma_1^{g} - \frac{1}{\varepsilon_1^g k_\varepsilon}} \\
	&<& 
\varepsilon_1\gamma_1 p, 
\end{eqnarray*}
where the last inequality follows from (4).
Hence 
$\frac{(\varepsilon_1 + \zeta_1) \cdot \beta + \eta_1}{\beta}
= \varepsilon_1 \gamma_1 +  \frac{\eta_1}{\beta}
< \varepsilon_1 \gamma_1 + \varepsilon_1\gamma_1 p
< \varepsilon$.

Now, we have
\[
\frac{1}{\eta_1} 
 = 
\frac{1}{(\varepsilon_1 \gamma_1)^{g+1}} 
=
\bigl(\frac{p+2}{\varepsilon}\bigr)^{g+1} 
<
(2p)^{g+1}\varepsilon^{-g-1} - 1
\]
as $p>4$.
Finally, we have
\[
\frac{1}{\eta_1} +1
<
(2p)^{g+1}\varepsilon^{-g-1} 
<
(2 \cdot 2^{g+1})^{g+1}\varepsilon^{-g-1} 
\leq  2^{(g+1)(g+2)}\varepsilon^{-g-1}
< \varepsilon^{-g-2},
 \]
where the last inequality follows because
	$\varepsilon < 2^{-(g+1)(g+2)}$.
\end{proof}

Note that the corresponding counting and removal lemmas follow immediately from \cref{Theorem: Stable Regularity for Relational Structures}.

\section{Almost stable regularity for relational structures}

We now consider structures that are not stable, but which have very few witnesses to their non-stability. In this ``almost stable'' situation we will show that there is also a highly structured regularity lemma, in which a modification of the original structure arises as a finite blow-up.
However, in this almost stable case, we merely get a \emph{global} regularity lemma, rather than a \emph{local} one. 

More precisely,
instead of obtaining a blow-up by changing a small fraction of the relations across each tuple of parts of the partition (of appropriate length), we can instead obtain a blow-up only by changing a small fraction of the relations across the entire structure. The key difference is that the vertices corresponding to these modified relations might be concentrated in certain regions of the structure, in which they make up a large fraction of the vertices.

This distinction between local and global regularity is often referred to as the distinction between \emph{regularity} and \emph{weak regularity}.

\begin{definition}
	\label{homdensity-finite}
Let $\M$ and $\N$ be finite $\Lang$-structures with underlying sets $M$ and $N$ respectively, and set $n = |N|$ and $k=|M|$. Define the \defn{induced homomorphism density} of $\M$ in $\N$ to be 
\[
\tind(\M, \N) \defas \frac{\bigl| \ind(\M, \N) \bigr|}
	{n(n-1)\cdots(n-k+1)},
	\]
	where $\ind(\M, \N)$ is the number of embeddings from $\M$ to $\N$, in other words, injective homomorphisms that yield an induced substructure (i.e., which preserve all relations and all negations of relations).
\end{definition}

For more details on induced homomorphism densities in the case of graphs, see \cite[\S5.2]{MR3012035}; for a more general setting, see 
\cite[\S2]{2014arXiv1412.8084A} and
\cite[Chapter~1]{kruckman-thesis}. 

\begin{definition}
	Let $\brap \in \Nats$. An $\Lang$-structure $\M$ \defn{minimally} has the $\brap$-branching property for a quantifier-free formula $\varphi(\x; \y)$ if $\M$ has the $\brap$-branching property for $\varphi(\x;\y)$ and no induced substructure of $\M$ has the $\brap$-branching property for $\varphi(\x;\y)$. 
\end{definition}

\begin{lemma}
\label{Upper bound on size of structures which are minimal for branching} 
If $\M$ minimally has the $\brap$-branching property for $\varphi(\x;\y)$ then $|\M| \leq 2^{\brap} \cdot (|\x|+|\y|)$. 
\end{lemma}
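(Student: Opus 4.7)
The plan is to extract from the $\brap$-branching witness a small induced substructure that itself satisfies the $\brap$-branching property for $\varphi(\x;\y)$, and then invoke minimality to conclude that this substructure is all of $\M$. The key observation is that the branching property is witnessed by only finitely many tuples, whose total number of entries is bounded by $2^\brap(|\x|+|\y|)$, and because $\varphi$ is quantifier-free its truth value on those tuples does not change when we pass to the induced substructure they span.

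More concretely, I would first unpack the assumption: there exist tuples $\<\a_i\>_{i \in \{0,1\}^\brap}$ with $\len(\a_i) = |\x|$ and $\<\b_j\>_{j \in \{0,1\}^{<\brap}}$ with $\len(\b_j) = |\y|$ such that for all $i \in \{0,1\}^\brap$, all $j \in \{0,1\}^{<\brap}$, and all $h \in \{0,1\}$, we have $j\^h \preceq i \Rightarrow \bigl(\M \models \varphi(\a_i;\b_j) \Leftrightarrow h=1\bigr)$. Let
\[
    N \defas \bigcup_{i \in \{0,1\}^\brap} \range(\a_i) \;\cup\; \bigcup_{j \in \{0,1\}^{<\brap}} \range(\b_j),
\]
and let $\N$ be the substructure of $\M$ induced on $N$. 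Counting, $|\{0,1\}^\brap| = 2^\brap$ and $|\{0,1\}^{<\brap}| = 2^\brap - 1$, so
\[
    |N| \;\leq\; 2^\brap \cdot |\x| + (2^\brap - 1)\cdot |\y| \;\leq\; 2^\brap \cdot (|\x|+|\y|).
\]

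The next step is to check that $\N$ still has the $\brap$-branching property for $\varphi(\x;\y)$, using the very same witnessing tuples. Since all entries of the $\a_i$ and $\b_j$ lie in $N$ by construction, the tuples live in $\N$; and since $\varphi$ is quantifier-free, $\N \models \varphi(\a_i;\b_j) \Leftrightarrow \M \models \varphi(\a_i;\b_j)$ holds by a straightforward induction on quantifier-free formulas (using that $\N$ is an induced substructure, so all atomic relations agree with those in $\M$). Thus the same branching pattern is witnessed inside $\N$.

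Finally, I would apply the minimality hypothesis: $\M$ minimally has the $\brap$-branching property for $\varphi(\x;\y)$, meaning no proper induced substructure of $\M$ has this property. Since $\N$ is an induced substructure of $\M$ with the $\brap$-branching property for $\varphi(\x;\y)$, we must have $\N = \M$, and hence $|\M| = |N| \leq 2^\brap \cdot (|\x|+|\y|)$. There is no significant obstacle here; the entire argument is a bookkeeping exercise once one notes that quantifier-freeness of $\varphi$ lets the witness descend to the induced substructure generated by the witnessing parameters.
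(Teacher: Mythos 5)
Your proof is correct and takes essentially the same approach as the paper: both extract the witnessing tuples, form the induced substructure they span (which has at most $2^{\brap}(|\x|+|\y|)$ elements), observe that this substructure still has the $\brap$-branching property because $\varphi$ is quantifier-free, and invoke minimality. The only cosmetic difference is that the paper argues by contradiction whereas you conclude directly that the substructure equals $\M$.
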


\begin{proof}
	Suppose $\M$ has the $\brap$-branching property for $\varphi(\x;\y)$ but $|\M| > 2^{\brap} \cdot (|\x|+|\y|)$.
	Let $M_0 \subseteq M$ consist of all tuples in a witness to the
	$\brap$-branching property for $\varphi(\x;\y)$. Then
	$|M_0| \leq 2^{\brap} \cdot (|\x|+|\y|)$, and so $\M_0$, the induced substructure of $\M$ with underlying set $M_0$, is a proper substructure of $\M$.
	Hence $\M_0$ also has the $\brap$-branching property for $\varphi(\x;\y)$, and so $\M$ was not minimal.
\end{proof}

\begin{definition}
Let $\brap \in \Nats$ and $\delta > 0$. An $\Lang$-structure $\M$ has the $(\delta, \brap)$-branching property for a quantifier-free formula $\varphi(\x; \y)$ if there is a structure $\N$ which minimally has the $\brap$-branching property and for which $\tind(\N, \M) \geq \delta$.  

We say an $\Lang$-structure $\M$ has the $(\delta, \brap)$-branching property if it has the $(\delta, \brap)$-branching property for some relation $E \in \Lang$ with some partition of the variables where one part is a singleton.
\end{definition}

Note that a structure $\M$ has the $\brap$-branching property for a quantifier-free formula $\varphi(\x;\y)$ exactly when there is a structure $\N$ which minimally has the $\brap$-branching property for $\varphi(\x;\y)$ and for which there exists at least one embedding from $\N$ into $\M$. This motivates the idea that a structure not having the $(\delta,\brap)$-branching property is a sign that it has very few witnesses to non-stability.

The next result follows from \cite[Theorem 2]{2014arXiv1412.8084A}.

\begin{proposition}[{\cite[Theorem 2]{2014arXiv1412.8084A}}]
\label{Removal Lemma}
Suppose $\<\F_i\>_{i \leq \ell}$ is a finite collection of finite $\Lang$-structures. Then for every $\varepsilon > 0$ there is an $n_\varepsilon \in \Nats$ and a $\delta >0$ such that whenever
\begin{itemize}
    \item $\M$ is a finite $\Lang$-structure with $|M| > n_\varepsilon$ and

    \item $\tind(\F_i, \M) < \delta$ for all $i \leq \ell$,
\end{itemize}
then there is an $\Lang$-structure $\M^*$ with the same underlying set as $\M$ such that 
\begin{itemize}
\item $\tind(\F_i, \M^*) = 0$ for all $i \leq \ell$ and

\item 
		$|E^{\M} \SymDiff E^{\M^*}| \leq \varepsilon \cdot |M|^{\arity(E)}$
	for all $E \in \Lang$.
\end{itemize}
\end{proposition}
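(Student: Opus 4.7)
The plan is to follow the classical removal-lemma template: combine a Szemer\'edi-type regularity lemma for finite structures in a finite relational language with a corresponding counting lemma, and then show contrapositively that if $\M$ admits few copies of each $\F_i$, it can be edited into a structure $\M^*$ containing none.

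First, fix $\varepsilon>0$ and the collection $\<\F_i\>_{i\le \ell}$. I would apply a hypergraph-style regularity lemma for finite $\Lang$-structures with a small parameter $\varepsilon'\ll \varepsilon$, obtaining an equitable partition $P$ of $M$ into $N=N(\varepsilon')$ parts (with $N$ independent of $|M|$) such that, for every relation $E\in\Lang$, all but an $\varepsilon'$-fraction of $\arity(E)$-tuples of parts $(V_0,\dots,V_{\arity(E)-1})$ are $\varepsilon'$-regular in the sense needed for a counting lemma.

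Next, I would construct $\M^*$ by ``cleaning'' each $E\in\Lang$: for any $\arity(E)$-tuple of parts that is either irregular, or has $E$-density below a threshold $\eta'$ (also to be chosen small relative to $\varepsilon$), delete all $E$-tuples lying in $V_0\times\cdots\times V_{\arity(E)-1}$. The standard bookkeeping gives
\[
|E^\M \SymDiff E^{\M^*}| \;\le\; \bigl(\varepsilon' + \eta'\bigr)\cdot |M|^{\arity(E)} \;\le\; \varepsilon\cdot |M|^{\arity(E)},
\]
provided $\varepsilon'$ and $\eta'$ are chosen appropriately. By construction, every remaining tuple in $E^{\M^*}$ lies in a regular tuple of parts of $E$-density at least $\eta'$.

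Finally, I would invoke the hypergraph counting lemma: if $\tind(\F_i,\M^*)>0$ for some $i\le \ell$, then $\M^*$ contains a copy of $\F_i$ supported on regular, dense tuples of parts; the counting lemma then forces at least $c(\varepsilon',\eta',\F_i)\cdot |M|^{|F_i|}$ induced copies of $\F_i$ in $\M$ itself. Choosing $\delta$ smaller than $c(\varepsilon',\eta',\F_i)/\prod_{j<|F_i|}(1-j/n_\varepsilon)$ for each $i$, and $n_\varepsilon$ large enough that the regularity and counting lemmas apply, yields a contradiction with the assumption $\tind(\F_i,\M)<\delta$, and hence $\tind(\F_i,\M^*)=0$.

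The main obstacle is the counting lemma in the presence of relations of arity at least $3$: once $q_\Lang\ge 3$, the naive analogue of Szemer\'edi regularity is too weak to count induced copies of $\F_i$, and one must use the stronger hypergraph regularity machinery (in the spirit of Gowers or R\"odl--Nagle--Schacht--Skokan), adapted to multiple relations of possibly different arities. In practice, rather than redeveloping this infrastructure, one would cite the quoted theorem of Aroskar \cite{2014arXiv1412.8084A} directly, which packages precisely this regularity-plus-counting apparatus for finite $\Lang$-structures.
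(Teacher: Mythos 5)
The paper does not reprove this result: it cites Aroskar--Cummings, Theorem~2, and the only content beyond the citation is the remark immediately following the proposition, which translates their density $p(\F_i,\M)=\tind(\F_i,\M)/\tind(\F_i,\F_i)$ into the $\tind$ form used here, observing that the denominator is uniformly bounded via \cref{Upper bound on size of structures which are minimal for branching}. Your final paragraph lands on the same conclusion --- cite the theorem --- so you and the paper agree on the endpoint, but you omit the notational translation that the paper actually carries out.

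The sketched direct argument in your middle paragraphs has a substantive gap. The cleaning step you describe (delete every $E$-tuple lying over an irregular or low-density $\arity(E)$-tuple of parts) is the cleaning for the \emph{non-induced} hypergraph removal lemma. The statement here is about the \emph{induced} density $\tind$, where an embedding must preserve non-relations as well as relations. After a deletion-only cleaning, the structure $\M^*$ can still contain many induced copies of $\F_i$: an induced copy uses both edges and non-edges, and a non-edge of $\F_i$ can happily sit inside a dense regular cell that you did not touch, while cells of intermediate density (bounded away from both $0$ and $1$) cannot be handled by deletion or filling at all. So the conclusion $\tind(\F_i,\M^*)=0$ does not follow from your cleaning. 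Getting the induced version right requires the considerably more delicate machinery in the Alon--Fischer--Krivelevich--Szegedy tradition, which is precisely what Aroskar--Cummings package for finite relational structures, and is precisely why the paper cites their theorem rather than redeveloping the argument.
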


Note that \cite[Theorem 2]{2014arXiv1412.8084A} was originally stated in terms of quantities of the form $p(\F_i,\M)$ (and analogously for $\M^*$), which equals $\tind(\F_i,\M)/\tind(\F_i, \F_i)$ (by their Fact 1). Note that when $\F_i$ minimally has the 
	$\brap$-branching property for all $E\in\Lang$ with partitions of the variables where one part is a singleton, then the
denominator $\tind(\F_i, \F_i)$ is bounded by
$ (2^{\brap} \cdot q_\Lang)^ {2^{\brap} \cdot q_\Lang} $  
by
\cref{Upper bound on size of structures which are minimal for branching}.
Hence one can check that 
the removal lemma \cref{Removal Lemma} is essentially equivalent to theirs.

\begin{theorem}[Almost stable regularity for finite relational structures]
\label{Almost Stable Regularity for Relational Structures}
Let $g \defas \lceil 5 \cdot n_\Lang \cdot \brap \cdot \log \brap \rceil$. For all $\varepsilon > 0$ there is a $k_\varepsilon$  and $\delta > 0$ such that if
\begin{itemize}
\item $\M$ is a $\Lang$-structure with $|\M| \geq k_\varepsilon$, and
\item $\M$ does not have the $(\delta, \brap)$-branching property,
\end{itemize}
then there is a structure $\N$ with the same underlying set as $\M$ and an equitable partition $P$ of $\N$ such that 
\begin{itemize}
\item[(i)] $|E^\M \SymDiff E^\N| \leq \arity(E)  \cdot\varepsilon\cdot |M|^{\arity(E)}$
	for all $E \in \Lang$,
    
\item[(ii)] $P$ is indivisible, and
    
\item[(iii)] $|P| \leq \bigl({\frac{\varepsilon}{2}}\bigr)^{-g-2}$.
\end{itemize}
\end{theorem}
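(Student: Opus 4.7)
The plan is to reduce to the stable case by using \cref{Removal Lemma} to eliminate all minimal witnesses to the $\brap$-branching property, and then apply \cref{Theorem: Stable Regularity for Relational Structures} to the resulting structure. Given $\varepsilon>0$, I first enumerate up to isomorphism the collection $\F_1,\dots,\F_\ell$ of finite $\Lang$-structures that minimally have the $\brap$-branching property for some relation $E\in\Lang$ under some partition of its variables in which one part is a singleton. By \cref{Upper bound on size of structures which are minimal for branching}, each such $\F_i$ has size at most $2^{\brap}\cdot q_\Lang$, so the list is finite. I then invoke \cref{Removal Lemma} on this family with tolerance $\varepsilon/2$, obtaining a threshold $n_{\varepsilon/2}$ and a constant $\delta>0$; the latter serves as the $\delta$ in the statement.

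Now suppose $|M|\ge k_\varepsilon$, where $k_\varepsilon$ is chosen to be the maximum of $n_{\varepsilon/2}$ and the threshold required by \cref{Theorem: Stable Regularity for Relational Structures} applied with tolerance $\varepsilon/2$, and suppose $\M$ does not have the $(\delta,\brap)$-branching property. Then $\tind(\F_i,\M)<\delta$ for every $i\le \ell$, so \cref{Removal Lemma} yields an $\Lang$-structure $\M^*$ on the underlying set $M$ with $\tind(\F_i,\M^*)=0$ for all $i$ and $|E^{\M}\SymDiff E^{\M^*}|\le (\varepsilon/2)\cdot |M|^{\arity(E)}$ for every $E\in\Lang$. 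I claim $\M^*$ has the non-$\brap$-branching property: if some witness to the $\brap$-branching property for some $E\in\Lang$ (with a singleton-containing partition of its variables) existed in $\M^*$, then its finitely many tuples would determine a bounded-size induced substructure of $\M^*$ retaining the property, which upon passage to a minimal such induced substructure would be isomorphic to some $\F_i$, contradicting $\tind(\F_i,\M^*)=0$.

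Next I apply \cref{Theorem: Stable Regularity for Relational Structures} to $\M^*$ with tolerance $\varepsilon/2$, yielding an $\Lang$-structure $\N$ on the underlying set $M$ and an equitable partition $P$ such that $P$ is indivisible for $\N$, $|P|\le (\varepsilon/2)^{-g-2}$, and, for every $E\in\Lang$ and every $\arity(E)$-tuple $\<p_i\>_{i<\arity(E)}$ of parts,
\[
\Bigl|(E^{\M^*}\SymDiff E^{\N})\cap \prod_{i<\arity(E)}p_i\Bigr|\le \arity(E)\cdot (\varepsilon/2)\cdot \prod_{i<\arity(E)}|p_i|.
\]
Summing over all $\arity(E)$-tuples of parts (which partition $M^{\arity(E)}$) gives $|E^{\M^*}\SymDiff E^{\N}|\le \arity(E)\cdot (\varepsilon/2)\cdot |M|^{\arity(E)}$. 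The triangle inequality combined with $\arity(E)\ge 1$ then yields
\[
|E^{\M}\SymDiff E^{\N}|\le (\varepsilon/2)|M|^{\arity(E)}+\arity(E)(\varepsilon/2)|M|^{\arity(E)}\le \arity(E)\cdot \varepsilon\cdot |M|^{\arity(E)},
\]
which is (i). Conclusions (ii) and (iii) are inherited directly from the application of the stable regularity theorem to $\M^*$.

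The only non-routine step is the minimalisation argument establishing that $\tind(\F_i,\M^*)=0$ for all minimal $\F_i$ forces the non-$\brap$-branching property in $\M^*$; this is the main obstacle, but it follows from the observation that any witness to the branching property determines a bounded-size induced substructure, within which one can repeatedly pass to proper induced substructures still witnessing the property until the result is minimal. All remaining work is bookkeeping of the two error terms and the choice of $k_\varepsilon$.
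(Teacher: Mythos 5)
Your proof is correct and takes essentially the same route as the paper: apply the removal lemma at tolerance $\varepsilon/2$ to excise the (finitely many, by \cref{Upper bound on size of structures which are minimal for branching}) minimal witnesses to the $\brap$-branching property, then apply \cref{Theorem: Stable Regularity for Relational Structures} at tolerance $\varepsilon/2$ to the resulting stable structure $\M^*$, and combine the two error bounds via the triangle inequality using $\arity(E)\ge 1$. The paper's proof is considerably terser --- it does not spell out the minimalisation argument showing that $\tind(\F_i,\M^*)=0$ for all minimal $\F_i$ forces the non-$\brap$-branching property, nor the step of choosing $k_\varepsilon$ large enough for both lemmas, nor the summation over tuples of parts --- so your write-up fills in details the paper leaves implicit, but the underlying decomposition and the choice of tolerances are identical.
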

\begin{proof}
	First apply \cref{Removal Lemma} with $\frac{\varepsilon}{2}$ to get a structure $\M^*$ without the $\brap$-branching property such that
	$|E^\M \SymDiff E^{\M^*}| \leq \arity(E)  \cdot\frac{\varepsilon}{2}\cdot |M|^{\arity(E)}$ 
	for all $E \in \Lang$.
	Then apply \cref{Theorem: Stable Regularity for Relational Structures} with $\M^*$ and $\frac{\varepsilon}{2}$ to get a structure $\N$ and partition $P$ such that  (ii) and (iii) hold and
	$|E^{\M^*} \SymDiff E^{\N}| \leq \arity(E)  \cdot\frac{\varepsilon}{2}\cdot |M|^{\arity(E)}$ 
	for all $E \in \Lang$. Then condition (i) follows by considering the symmetric difference of $E^\M$ and $E^\N$.
\end{proof}

\section{Borel stable regularity for relational structures}

We now consider ways of extending the almost stable regularity lemma from finite relational structures to Borel relational structures.
Somewhat analogously for the case of graphs, 
Lov\'asz and Szegedy \cite{MR2306658}
have developed analytic versions of the graph regularity lemma, expressed in terms of \emph{graphons} and measurable partitions of their domains.

In this section we provide an almost stable regularity lemma for Borel structures, which shows that every Borel structure that is almost stable (in a sense we make precise) is close in $L^1$ to a Borel blow-up of a finite structure.

We will define Borel structures to have underlying set $[0,1]$, and we will mostly deal with Lebesgue measure $\lambda$ on $[0,1]$.
Note that whenever $(P, \mu)$ is a standard probability space, there is a measure preserving map from $([0,1], \lambda)$ onto $(P, \mu)$. 
Hence the main arguments of this section go through with $([0,1], \lambda)$ replaced by an arbitrary standard probability space.

We begin with definitions of
Borel structures and the notions of $L^1$-distance, blow-ups, and induced homomorphism densities for them. These can be seen as analogous to the corresponding notions for the theory of graphons \cite[Chapter~7]{MR3012035}. 

\begin{definition}
\label{Borel L-structure}
	A \defn{Borel} $\Lang$-structure $\M$ is an $\Lang$-structure with underlying set $[0,1]$ such that for all $E\in \Lang$,
	the relation $E^\M$ interpreting the relation symbol $E$ is Borel.
\end{definition}

It will often be convenient to work with characteristic functions instead of relations.

\begin{definition}
\label{functional-structure}
Let $\M$ be an $\Lang$-structure (with arbitrary underlying set). 
For each $E \in \Lang$, define
	$\widetilde{E}^\M \colon [0,1]^{\arity(E)} \to \{0,1\}$ to be the characteristic function of the relation $E^\M$. Note that these functions are Borel when $\M$ is a Borel $L$-structure.
\end{definition}

The $L^1$-distance plays a key role in our arguments in this section.

\begin{definition}
\label{L-1 distance}
Suppose $\M$ and $\N$ are Borel $\Lang$-structures. We define the \defn{$L^1$-distance} between $\M$ and $\N$, written $\dist(\M, \N)$, to be 
\[
	\sum_{E \in \Lang}\int_{[0,1]^{\arity(E)}}\Bigl|\widetilde{E}^{\M}(\boldx) - \widetilde{E}^{\N}(\boldx)\Bigr| \dee \boldx,
\]
where $\boldx$ is a tuple of variables of length $\arity(E)$.
\end{definition}

We now consider finite structures, and their relationship to Borel structures via Borel blow-ups.
All finite structures in this section will have underlying set an initial segment of $\Nats$.

Every finite $\Lang$-structure with counting measure induces a Borel $\Lang$-structure, by taking its \emph{Borel blow-up}.
For each $k$ such that $0\leq k < r-1$, define $\iota_r(k) \defas [\frac{k}{r}, \frac{k+1}{r})$ and $\iota_r(r-1) \defas [\frac{r-1}{r}, 1]$.

\begin{definition}
	\label{def:blowup}
	Suppose $\M$ is a finite $\Lang$-structure with underlying set $M$. Define its \defn{Borel blow-up}, $\Borel{\M}$, to be the Borel $\Lang$-structure such that for all $E \in \Lang$ and $i < \arity(E)$, whenever $x_i \in \iota_r(k_i)$ for all $k_i < |M|$  we have 
\[
	\Borel{\M}\models E(x_0, \dots, x_{\arity(E)-1}) \quad \text{ if and only if } \quad \M \models E(k_0, \dots, k_{\arity(E)-1}).
\]
\end{definition}

Observe that the Borel blow-up of a finite structure is a particular kind of blow-up, in the sense of \cref{gen-blowup}.

By a standard argument, every Borel $\Lang$-structure is close in $L^1$ to the Borel blow-up of some finite $\Lang$-structure.

\begin{lemma}
\label{Approximate Borel structures by finite structures structures in L1}
	Let $\M$ be a Borel $\Lang$-structure. For all $\varepsilon > 0$ and all $n_0 \in \Nats$, there is an $n > n_0$ and an $\Lang$-structure $\N$ with underlying set $\{0, \ldots, n-1\}$ such that 
	$\dist(\M, \Borel{\N}) < \varepsilon$. 
\end{lemma}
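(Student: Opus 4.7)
The plan is to approximate each relation $E^\M$, viewed as a Borel subset of $[0,1]^{\arity(E)}$, by a finite union of dyadic cubes of a common sufficiently small scale, and then to define $\N$ to record which cubes are ``in'' and which are ``out'' for each relation.

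First, fix $\varepsilon > 0$ and $n_0 \in \Nats$. For each $E \in \Lang$, the relation $E^\M \subseteq [0,1]^{\arity(E)}$ is Borel. By outer regularity of Lebesgue measure on $[0,1]^{\arity(E)}$, choose an open set $U_E \supseteq E^\M$ with $\mu(U_E \setminus E^\M) < \varepsilon/(2|\Lang|)$. Every open subset of $[0,1]^{\arity(E)}$ is a countable union of half-open dyadic cubes, so by taking a sufficiently long initial segment of such a cover I obtain a finite union $F_E \subseteq U_E$ of half-open dyadic cubes with $\mu(U_E \setminus F_E) < \varepsilon/(2|\Lang|)$. Consequently $\mu(E^\M \mathrel{\triangle} F_E) < \varepsilon/|\Lang|$.

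Next, I pick a single scale that works for all relations at once. Since $\Lang$ is finite and each $F_E$ is a finite union of dyadic cubes, there exists $k \in \Nats$ such that every $F_E$ is a (necessarily finite) union of dyadic cubes of side length $2^{-k}$; enlarging $k$ if necessary I may also assume $n \defas 2^k > n_0$. Define $\N$ on $\{0,\dots,n-1\}$ by declaring
\[
\N \models E(k_0,\dots,k_{\arity(E)-1}) \quad\text{iff}\quad \iota_n(k_0) \times \cdots \times \iota_n(k_{\arity(E)-1}) \subseteq F_E,
\]
which is well-defined because each such product is one of the side-$2^{-k}$ dyadic cubes out of which $F_E$ is built (up to a measure-zero discrepancy at the right endpoint coming from the convention in the definition of $\iota_n$).

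By \cref{def:blowup} and this construction, the characteristic function $\widetilde{E}^{\Borel{\N}}$ agrees almost everywhere with the indicator of $F_E$. Therefore
\[
\int_{[0,1]^{\arity(E)}} \bigl|\widetilde{E}^{\M}(\boldx) - \widetilde{E}^{\Borel{\N}}(\boldx)\bigr|\, \dee \boldx \;=\; \mu\bigl(E^\M \mathrel{\triangle} F_E\bigr) \;<\; \varepsilon/|\Lang|,
\]
and summing over $E \in \Lang$ gives $\dist(\M, \Borel{\N}) < \varepsilon$, as required. There is no real obstacle here: the statement is a standard consequence of the density of finite unions of dyadic cubes in the Lebesgue $\sigma$-algebra under the pseudometric $(A,B)\mapsto \mu(A\triangle B)$, together with the freedom to refine to a common dyadic scale; the only point requiring a moment's care is the measure-zero boundary adjustment implicit in the definition of $\iota_n$.
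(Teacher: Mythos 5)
Your proof is correct and follows essentially the same route as the paper: approximate each $E^\M$ by a finite union of grid boxes at a common scale $n>n_0$, define $\N$ by declaring a tuple an $E$-edge exactly when the corresponding box lies inside the approximating set, and sum the $L^1$-errors over $E\in\Lang$. The paper simply asserts the existence of such approximating sets at a common scale, whereas you spell out the standard argument (outer regularity, truncating the countable dyadic cover, refining to a common dyadic scale, and noting the measure-zero boundary mismatch coming from the closed right endpoint of $\iota_n(n-1)$); this is a harmless elaboration, not a different method.
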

\begin{proof}
	There is some $n\in\Nats$ such that
	for every $E\in \Lang$, 
	some set $S_{E, \varepsilon}\subseteq [0,1]^{\arity(E)}$  
	that is a finite union of sets of the form $\prod_{s <\arity(E)} \iota_{n}(k_s)$ 
	satisfies
	$\lambda(E \SymDiff S_{E, \varepsilon}) < \varepsilon / |\Lang|$.

	Let $\N$ be the $\Lang$-structure with underlying set $\{0, \ldots, n-1\}$ satisfying
	\[ \N \models E(k_0, \ldots, k_{\arity(E)-1})
	\quad \text{ if and only if } \quad 
	\textstyle
	\prod_{s <\arity(E)} \iota_{n}(k_s) \subseteq S_{E, \varepsilon}.
		\]
	for $E\in \Lang$ and $k_0, \ldots, k_{\arity(E)-1}< n$.
	By construction of $\N$, by summing over all relation symbols $E \in \Lang$, we have 
	$\dist(\M, \Borel{\N}) < \varepsilon$. 
\end{proof}

For finite structures of the same size (hence on the same underlying set, an initial segment of $\Nats$) with a single relation, their normalized edit distance is the same as their $L^1$-distance. This fact follows immediately from Definitions~\ref{L-1 distance} and 
	\ref{def:blowup} of $L^1$-distance and Borel blow-up.

\begin{lemma}
\label{Difference in edges related to L1}
Suppose $\M$ and $\M^*$ are finite $\Lang$-structures on the same underlying set $M$. Then 
\[
\dist(\Borel{\M}, \Borel{\M^*}) = \sum_{E \in \Lang}\frac{|E^{\M} \SymDiff E^{\M^*}|}{|M^{\arity(E)}|}.
\]
\end{lemma}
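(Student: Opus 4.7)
The plan is to unfold the definitions and compute directly; this lemma is essentially a matter of bookkeeping.

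First I would apply \cref{L-1 distance} to expand
\[
\dist(\Borel{\M}, \Borel{\M^*}) = \sum_{E \in \Lang}\int_{[0,1]^{\arity(E)}}\Bigl|\widetilde{E}^{\Borel{\M}}(\boldx) - \widetilde{E}^{\Borel{\M^*}}(\boldx)\Bigr| \dee \boldx,
\]
so that it suffices to evaluate the integrand relation by relation. Fix $E \in \Lang$, write $n \defas \arity(E)$ and $r \defas |M|$, and consider the finite partition of $[0,1]^n$ into the products $\prod_{i<n}\iota_r(k_i)$ indexed by tuples $(k_0, \ldots, k_{n-1}) \in M^n$. Each such product has Lebesgue measure exactly $r^{-n}$ (the endpoint convention in the definition of $\iota_r$ only changes the boundary interval on a measure-zero set, so it is irrelevant to the integral).

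Next I would observe that, by \cref{def:blowup}, the characteristic function $\widetilde{E}^{\Borel{\M}}$ is constant on each $\prod_{i<n}\iota_r(k_i)$, taking the value $\widetilde{E}^{\M}(k_0, \ldots, k_{n-1})$; and similarly $\widetilde{E}^{\Borel{\M^*}}$ takes the constant value $\widetilde{E}^{\M^*}(k_0, \ldots, k_{n-1})$ there. Hence
\[
\Bigl|\widetilde{E}^{\Borel{\M}}(\boldx) - \widetilde{E}^{\Borel{\M^*}}(\boldx)\Bigr|
= \Ind\bigl[(k_0, \ldots, k_{n-1}) \in E^{\M} \SymDiff E^{\M^*}\bigr]
\]
for $\boldx \in \prod_{i<n}\iota_r(k_i)$, because two $\{0,1\}$-valued quantities differ in absolute value by $1$ precisely when they disagree.

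Summing over the $r^n$ cells of the partition,
\[
\int_{[0,1]^n}\Bigl|\widetilde{E}^{\Borel{\M}} - \widetilde{E}^{\Borel{\M^*}}\Bigr| \dee \boldx
= \sum_{(k_0, \ldots, k_{n-1}) \in M^n} \Ind\bigl[(k_0, \ldots, k_{n-1}) \in E^{\M} \SymDiff E^{\M^*}\bigr]\cdot r^{-n}
= \frac{|E^{\M} \SymDiff E^{\M^*}|}{|M|^{\arity(E)}}.
\]
Summing this identity over $E \in \Lang$ yields the statement. No step presents a real obstacle; the only minor point worth verifying is that the endpoint asymmetry in the definition of $\iota_r$ is measure-zero and so does not affect the integral.
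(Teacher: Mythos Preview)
Your proposal is correct and is precisely the direct unfolding of Definitions~\ref{L-1 distance} and~\ref{def:blowup} that the paper has in mind; the paper does not give a separate proof, stating only that the lemma follows immediately from those definitions. Your observation about the endpoint convention in $\iota_r$ affecting only a measure-zero set is the one detail worth noting, and you have handled it correctly.
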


We will later need finite blow-ups to make a structure large enough so as to apply the results of earlier sections.
A finite blow-up can also be seen as an instance of \cref{gen-blowup}.

\begin{definition}
	\label{def:finite-blowup}
Let $\M$ be a finite $\Lang$-structure and let $p\in \Nats$ be positive.
	The \defn{$p$-fold blow-up} of $\M$ is defined to be the structure $\M_p$ of size $p \cdot |M|$
	such that 
	for each relation $E \in \Lang$ and $x_0, \dots, x_{\arity(E)-1} \in M_p$, the underlying set of $\M_p$, we have
		\[
			\textstyle
			\M_p \models E(x_0, \dots, x_{\arity(E)-1})
			\quad
			\text{if and only if}
			\quad
			\M \models E( \floor{\frac{x_0}{p}}, \dots, \floor{\frac{x_{\arity(E)-1}}{p}}).\]
		We call $\M_p$ a \defn{finite blow-up} of $\M$.
\end{definition}

It is immediate that replacing a finite structure by a finite blow-up does not change its Borel blow-up.

\begin{lemma}
\label{Blow-ups give the same Borel structures}
Suppose $\M_p$ is the $p$-fold blow-up of a finite $\Lang$-structure $\M$. Then $\Borel{\M_p} = \Borel{\M}$. 
\end{lemma}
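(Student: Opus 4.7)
The plan is to unwind the definitions and show that for every $E \in \Lang$ and every tuple $\x = (x_0, \dots, x_{\arity(E)-1}) \in [0,1]^{\arity(E)}$, both Borel blow-ups assign the same truth value to $E(\x)$, namely the truth value that $\M$ assigns to the tuple obtained by collapsing each $x_i$ via $\iota_{|M|}^{-1}$. Since both structures have the same underlying set $[0,1]$ and the same relation symbols, pointwise agreement of all relations suffices.

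First I would fix $x \in [0,1]$ and let $j < p|M|$ be the unique index with $x \in \iota_{p|M|}(j)$, and $k < |M|$ the unique index with $x \in \iota_{|M|}(k)$. The key elementary computation is that
\[
  \iota_{p|M|}(j) \subseteq \iota_{|M|}\bigl(\lfloor j/p \rfloor\bigr),
\]
which forces $k = \lfloor j/p \rfloor$. This is immediate from $\iota_{p|M|}(j) = [j/(p|M|), (j+1)/(p|M|))$ being contained in $[\lfloor j/p \rfloor/|M|, (\lfloor j/p \rfloor + 1)/|M|)$ by the defining inequality $p\lfloor j/p\rfloor \le j < p(\lfloor j/p\rfloor +1)$. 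The boundary case $x = 1$ (where $j = p|M|-1$ and $k = |M|-1$) is handled by the analogous containment of the closed intervals $\iota_{p|M|}(p|M|-1) \subseteq \iota_{|M|}(|M|-1)$, using $\lfloor (p|M|-1)/p \rfloor = |M|-1$.

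Now, applying \cref{def:blowup} to $\M_p$ and then \cref{def:finite-blowup}, for any tuple $\x$ with $x_i \in \iota_{p|M|}(j_i)$,
\[
  \Borel{\M_p} \models E(\x)
  \iff \M_p \models E(j_0, \dots, j_{\arity(E)-1})
  \iff \M \models E\bigl(\lfloor j_0/p \rfloor, \dots, \lfloor j_{\arity(E)-1}/p \rfloor\bigr).
\]
By the identification of $k_i$ with $\lfloor j_i/p\rfloor$ just established, the right-hand side is precisely
\[
  \M \models E(k_0, \dots, k_{\arity(E)-1}),
\]
which by \cref{def:blowup} applied to $\M$ is exactly $\Borel{\M} \models E(\x)$. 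Hence $E^{\Borel{\M_p}} = E^{\Borel{\M}}$ for every $E \in \Lang$, and the two Borel $\Lang$-structures coincide. There is no real obstacle here; the only mild subtlety is bookkeeping the half-open intervals $\iota_r(k)$ correctly at the right endpoint, which is handled by the boundary case above.
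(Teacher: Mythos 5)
Your proof is correct: the paper treats this lemma as immediate and gives no argument, and you have simply filled in the routine definition-chasing (the interval containment $\iota_{p|M|}(j)\subseteq\iota_{|M|}(\lfloor j/p\rfloor)$ and the composition of \cref{def:blowup} with \cref{def:finite-blowup}) that underlies the claim. This is the same approach the paper implicitly takes, just spelled out; the bookkeeping, including the right-endpoint case, is handled correctly.
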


We may define induced homomorphism densities for Borel $\Lang$-structures, similarly to \cref{homdensity-finite}.
For more details on an analogous notion for graphons, see \cite[\S7.2]{MR3012035}.

\begin{definition}
Suppose $\M$ is a finite $\Lang$-structure with underlying set $\{0, \dots, |\M|-1\}$ and $\N$ is a Borel $\Lang$-structure. We define the \defn{induced homomorphism density} of $\M$ in $\N$ to be 
\[
\tind(\M, \N) \defas \int_{I(\M, \N)} \dee\boldx 
\]
	where $I(\M, \N)$ is the set of embeddings from $\M$ to $\N$, considered as a Borel subset of $[0,1]^{|\M|}$.
\end{definition}

The following lemma is immediate. 

\begin{lemma}
\label{Subgraph densities as finite structures are bounded by the same as Borel Structures}
Let $\M$ and $\N$ be finite $\Lang$-structures.
Then
\[
\tind(\M, \N) \leq \tind(\M, \Borel{\N}) .
\]
\end{lemma}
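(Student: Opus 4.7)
The approach is to partition the integration domain $[0,1]^{|M|}$ into the product blocks naturally determined by the Borel blow-up and to compare the resulting Lebesgue measure to the count $|\ind(\M,\N)|$.

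Write $k := |M|$ and $n := |N|$. For each tuple $(\kappa_0, \ldots, \kappa_{k-1}) \in \{0, \ldots, n-1\}^k$, set $B_{\kappa_0, \ldots, \kappa_{k-1}} := \prod_{i<k}\iota_n(\kappa_i)$; these blocks form a measurable partition of $[0,1]^k$ into $n^k$ pieces, each of Lebesgue measure $n^{-k}$. By the definition of $\Borel{\N}$ (\cref{def:blowup}), for almost every $(x_0, \ldots, x_{k-1}) \in B_{\kappa_0, \ldots, \kappa_{k-1}}$ the coordinates are pairwise distinct, and the map $i \mapsto x_i$ lies in $I(\M, \Borel{\N})$ precisely when $\M \models E(i_0, \ldots, i_{\arity(E)-1}) \iff \N \models E(\kappa_{i_0}, \ldots, \kappa_{i_{\arity(E)-1}})$ for every relation symbol $E \in \Lang$ and every tuple of indices.

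The key observation is that this condition depends only on $(\kappa_0, \ldots, \kappa_{k-1})$, and when these entries are pairwise distinct it is precisely the assertion that $i \mapsto \kappa_i$ is an embedding of $\M$ into $\N$ in the sense of $\ind(\M,\N)$. Thus exactly $|\ind(\M,\N)|$ such ``good'' blocks with pairwise distinct entries lie (up to a null set) inside $I(\M, \Borel{\N})$, while blocks with coincident entries contribute additional non-negative measure. Summing yields
\[
\tind(\M, \Borel{\N}) \;\geq\; \frac{|\ind(\M,\N)|}{n^k},
\]
and the result follows by comparing with $\tind(\M, \N) = |\ind(\M, \N)|/(n(n-1)\cdots(n-k+1))$: the elementary inequality $n(n-1)\cdots(n-k+1) \leq n^k$ combined with the non-negative contribution of the non-injective blocks absorbs the combinatorial gap between the two denominators.

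The only real subtlety would be measure-zero bookkeeping on block boundaries and on the set where coordinates coincide, but this is standard and does not affect the Lebesgue integral; once the block decomposition is in place no further substantive step is needed. I do not expect any significant obstacle, consistent with the authors' assertion that the lemma is immediate.
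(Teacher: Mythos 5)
Your block decomposition is the right starting point, and the identity $\tind(\M, \Borel{\N}) = \Full(\M, \N)/n^k$ (where $\Full$ counts full homomorphisms from $\M$ to $\N$) is correct: a block $B_{\kappa_0, \dots, \kappa_{k-1}}$ lies in $I(\M, \Borel{\N})$ up to a null set precisely when $i \mapsto \kappa_i$ is a full homomorphism, and every block has measure $n^{-k}$. From this one indeed gets $\tind(\M, \Borel{\N}) \geq |\ind(\M,\N)|/n^k$ since embeddings are full homomorphisms.

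The final step, however, is a genuine gap and not just deferred bookkeeping. You establish $\tind(\M, \Borel{\N}) \geq |\ind|/n^k$ and then assert that the extra measure from blocks with coincident $\kappa$-entries ``absorbs the combinatorial gap'' between $1/n^k$ and $1/(n(n-1)\cdots(n-k+1))$. That is not an argument, and in fact it is false: the extra measure is $(\Full - |\ind|)/n^k$, and there is no reason this should equal $|\ind|\bigl(\tfrac{1}{(n)_k} - \tfrac{1}{n^k}\bigr)$ or exceed it. Concretely, take $\Lang$ to consist of one binary relation $E$, and let $\M = \N$ be the two-element structure with $E = \{(0,1),(1,0)\}$. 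Then $|\ind(\M,\N)| = 2$, $(n)_k = 2$, so $\tind(\M,\N) = 1$; but $\Full(\M,\N) = 2$ as well (no constant map preserves $\neg E(0,0)$ and $E(0,1)$ simultaneously), so $\tind(\M, \Borel{\N}) = 2/4 = 1/2$. Thus $\tind(\M,\N) = 1 > 1/2 = \tind(\M, \Borel{\N})$, and no amount of extra mass from non-injective blocks can save the inequality.

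This also reveals that the lemma as printed in the paper (which is asserted without proof as ``immediate'') is not literally correct: the normalizations $(n)_k$ and $n^k$ go in opposing directions, and the inequality can fail when $|N|$ is not large relative to $|M|$. What is true, and what your block argument does prove, is $\tind(\M, \Borel{\N}) \geq \tind(\M, \N) \cdot (n)_k / n^k$; this suffices for the paper's downstream use (where $|\M|$ is bounded and $\N$ may be taken to be a large finite blow-up), but it is not the stated inequality. If you want a clean statement provable by your method, either replace the finite-structure denominator $(n)_k$ by $n^k$ or add the hypothesis that $|N|$ be sufficiently large relative to $|M|$ and the slack $\delta$. As written, your proof does not close.
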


In the case of Borel structures, we only ever care about a structure up to measure-zero sets. However, any stable Borel structure can be modified on a set of measure $0$ to make it unstable, and so we need to consider a weaker notion of stability for Borel structures.  
We use \cref{Subgraph densities as finite structures are bounded by the same as Borel Structures} to extend the definition of the $(\delta, \brap)$-branching property to Borel $\Lang$-structures.

\begin{definition}
Let $\brap \in \Nats$ and $\delta > 0$. A Borel $\Lang$-structure $\N$ has the $(\delta, \brap)$-branching property for a quantifier-free formula $\varphi(\x; \y)$ if there is a structure $\M$ which minimally has the $\brap$-branching property for $\varphi(\x;\y)$ and for which $\tind(\M, \N) \geq \delta$.  
\end{definition}

We can obtain a bound on the differences of induced homomorphism densities obtained from a bound on the $L^1$-distances of two structures.

\begin{lemma}
\label{Change in L1 doesn't change subgraph density}
Let $\F$ be a finite $\Lang$-structure with underlying set $F$, and let $\M$ and $\N$ be Borel $\Lang$-structures. If $\dist(\M, \N) \leq \varepsilon$ then 
\[
\bigl|\tind(\F, \M) - \tind(\F, \N)\bigr| \leq |\Lang| |F|^{q_\Lang} \cdot \varepsilon.
\]
\end{lemma}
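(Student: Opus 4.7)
The plan is to express the indicator of the set of embeddings $I(\F, \M)$ as an almost-everywhere product of $[0,1]$-valued Borel factors, one per pair $(E, \bar a)$ with $E \in \Lang$ and $\bar a = (a_0, \dots, a_{\arity(E)-1}) \in F^{\arity(E)}$, and then to control the difference between $\tind(\F, \M)$ and $\tind(\F, \N)$ via the elementary telescoping inequality for products. Concretely, the set of $\boldx \in [0,1]^{|F|}$ with any two coordinates equal has Lebesgue measure zero, so for almost every $\boldx$, the assignment $i \mapsto x_i$ lies in $I(\F, \M)$ if and only if, for each pair $(E, \bar a)$,
\[
\widetilde{E}^{\M}(x_{a_0}, \dots, x_{a_{\arity(E)-1}}) = \widetilde{E}^{\F}(\bar a).
\]
Setting
\[
\psi_{E, \bar a, \M}(\boldx) \defas \widetilde{E}^{\F}(\bar a) \cdot \widetilde{E}^{\M}(x_{a_0}, \dots, x_{a_{\arity(E)-1}}) + (1 - \widetilde{E}^{\F}(\bar a)) \cdot (1 - \widetilde{E}^{\M}(x_{a_0}, \dots, x_{a_{\arity(E)-1}})),
\]
the indicator $\Ind[\boldx \in I(\F, \M)]$ equals $\prod_{(E, \bar a)} \psi_{E, \bar a, \M}(\boldx)$ almost everywhere.

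Next I would invoke the standard inequality $|\prod_i f_i - \prod_i g_i| \leq \sum_i |f_i - g_i|$ for $[0,1]$-valued $f_i, g_i$, which follows by a one-line induction from $|fg - f'g'| \leq |f - f'| + |g - g'|$. A direct computation shows that, whether $\widetilde{E}^{\F}(\bar a)$ is $0$ or $1$, the difference $|\psi_{E, \bar a, \M}(\boldx) - \psi_{E, \bar a, \N}(\boldx)|$ simplifies to $|\widetilde{E}^{\M}(x_{a_0}, \dots, x_{a_{\arity(E)-1}}) - \widetilde{E}^{\N}(x_{a_0}, \dots, x_{a_{\arity(E)-1}})|$. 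Integrating the resulting pointwise bound over $\boldx \in [0,1]^{|F|}$ and applying Fubini to integrate out the $|F| - \arity(E)$ coordinates that do not appear in $\bar a$ (each contributing a factor of $1$), the summand for $(E, \bar a)$ is bounded above by $\int_{[0,1]^{\arity(E)}} |\widetilde{E}^{\M} - \widetilde{E}^{\N}| \, d\boldy$, which is exactly the summand of $\dist(\M, \N)$ for the relation $E$. Summing over the $|\Lang|$ choices of $E$ and the at most $|F|^{\arity(E)} \leq |F|^{q_\Lang}$ choices of $\bar a$, and using $\dist(\M, \N) \leq \varepsilon$, yields the stated bound $|\Lang| \cdot |F|^{q_\Lang} \cdot \varepsilon$.

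The one point that will require a little care is the case of tuples $\bar a$ with repeated entries: after integrating out the free coordinates, such a summand is supported on a proper sub-diagonal of $[0,1]^{\arity(E)}$, which carries no Lebesgue mass and so is not directly dominated by the corresponding $L^1$-summand of $\dist(\M, \N)$. I would handle this in the standard way for Borel/graphon settings, by adopting the convention that Borel structures are taken up to a.e.\ modification and choosing representatives of $\widetilde{E}^{\M}$ and $\widetilde{E}^{\N}$ that agree on every lower-dimensional diagonal of $[0,1]^{\arity(E)}$; this choice does not affect $\dist(\M, \N)$ and makes every repeated-entry tuple contribute $0$ to the telescoping sum, so only the injective tuples $\bar a$ ultimately count, and the clean estimate above applies. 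This diagonal subtlety is the only real obstacle; the rest of the argument is a routine combination of the product expansion, the telescoping inequality, and Fubini.
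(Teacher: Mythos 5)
Your argument --- writing the indicator of $I(\F,\cdot)$ as an almost-everywhere product of factors indexed by pairs $(E,\bar a)$, applying the telescoping inequality $|\prod f_i - \prod g_i|\le\sum|f_i-g_i|$, and integrating via Fubini --- is a careful unfolding of what the paper compresses into a single ``observe'' step, so at the level of method you and the paper agree. You have also put your finger on a genuine gap that the paper's proof does not address, and it is more than a cosmetic subtlety: as stated, the lemma is false. Take $\Lang=\{E\}$ with $E$ binary, let $\F$ be the one-point structure with $\F\models E(a,a)$, and let $\M,\N$ be the Borel structures with $E^\M=\{(x,x)\st x\in[0,1]\}$ and $E^\N=\emptyset$. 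Then $\dist(\M,\N)=0$, so the claimed bound is $0$, yet $\tind(\F,\M)=1$ while $\tind(\F,\N)=0$.

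Your proposed remedy of passing to representatives that agree on the diagonals, however, does not quite close this gap. The quantity $\tind(\F,\cdot)$ is not invariant under almost-everywhere modification of the structure --- that non-invariance is exactly what the counterexample exploits --- so replacing $\M,\N$ by diagonal-canonicalized $\M',\N'$ changes the very left-hand side $|\tind(\F,\M)-\tind(\F,\N)|$ one is trying to bound, and one has established the estimate only for the modified structures. To genuinely repair the lemma one must amend either the definitions or the hypotheses: for instance, build into the notion of a Borel $\Lang$-structure a canonicalization of $\widetilde{E}^\M$ on each lower-dimensional diagonal (so that $\tind(\F,\cdot)$ becomes an invariant of the a.e.\ class), or assume that $\F$ has no relation holding on a tuple with repeated coordinates, so that the repeated-entry summands vanish identically and your Fubini estimate applies directly to the injective tuples. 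Under either amendment your telescoping argument yields the stated bound; without one, the lemma requires qualification. Note finally that the lemma is later invoked with $\F$ a minimal witness to the branching property, and such $\F$ is not automatically free of diagonal atoms, so the gap propagates to the Borel version of the regularity theorem as well.
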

\begin{proof}
	Let $\Ind_{X}$ denote the indicator function of a set $X$.
Observe that
	\[
\int_{[0,1]^{|F|}}
	\Bigl| \Ind_{I(\F, \M)}(\boldx)- \Ind_{I(\F, \N)}(\boldx)\Bigr| \dee \boldx 
	\, \leq\, \sum_{E\in\Lang} |F|^{\arity(E)} \cdot \varepsilon,
\]
where 
	$|\boldx| = |F|$.
	But $q_\Lang$ is the maximum arity of a relation symbol in $\Lang$, and so 
	$\sum_{E\in\Lang} |F|^{\arity(E)} \le |\Lang| |F|^{q_\Lang}$, as desired.
\end{proof}

\begin{definition}
	A partition of $[0,1]$ is \defn{Borel} if it is a countable partition each part of which is Borel.
	A Borel partition is \defn{equitable} if every part has the same Lebesgue measure.
\end{definition}

A Borel $\Lang$-structure with an equitable finite partition can be thought of as a Borel blow-up of a finite structure (up to measure-preserving isomorphism).

\begin{definition}
	Suppose $\M$ is a Borel $\Lang$-structure. A Borel partition $P$ of $[0,1]$ is \defn{indivisible} with respect to $\M$ if for all relations $E \in \Lang$, for all $p_0, \dots, p_{\arity(E)-1} \in P$, and for any pair of tuples $\<a_i^0\>_{i < \arity(E)}, \<a_i^1\>_{i < \arity(E)}$ such that $a_i^0, a_i^1 \in p_i$ for $i < \arity(E)$, we have
\[
	\widetilde{E}^\M(a_0^0, \dots, a_{\arity(E)-1}^0) = \widetilde{E}^\M(a_0^1, \dots, a_{\arity(E)-1}^1).
\]
\end{definition}

Whereas in equitable partitions of finite structures, the size of the parts can differ by up to $1$ (when the partition size does not divide the structure size), in the Borel case the Lebesgue measure of any two parts must be be equal.  The following lemma relates these two notions.

\begin{lemma}
\label{Find equitable indivisible partitions on [0 1] from finite ones}
	Let $\M$ be a finite $\Lang$-structure with underlying set $M$.
	Suppose $P$ is an indivisible partition of $\M$. Then there is a Borel $\Lang$-structure $\M^+$ and an equitable partition $P^+$ of $[0,1]$ such that
\begin{itemize}
\item $P^+$ is indivisible with respect to $\M^+$ and

\item $\dist(\Borel{\M}, \M^+) \leq \sum_{E \in \Lang}\frac{|P|-1}{|M|}$.
\end{itemize}
\end{lemma}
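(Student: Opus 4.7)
The plan is to construct $\M^+$ as the Borel blow-up of the finite quotient of $\M$ by $P$, with $P^+$ the standard equitable partition of $[0,1]$ into $|P|$ intervals of equal Lebesgue measure.

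Because $P$ is indivisible with respect to $\M$, there is a well-defined finite $\Lang$-structure $\N$ on $\{0, \dots, |P|-1\}$ given by $\N \models E(i_0, \dots, i_{\arity(E)-1})$ iff $\M \models E(a_0, \dots, a_{\arity(E)-1})$ for some (equivalently, every) choice of representatives $a_j \in p_{i_j}$. I set $\M^+ \defas \Borel{\N}$ and $P^+ \defas \{\iota_{|P|}(i) \st i < |P|\}$. Each part of $P^+$ has Lebesgue measure $1/|P|$, so $P^+$ is equitable, and $P^+$ is indivisible with respect to $\M^+$ directly from \cref{def:blowup}.

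The heart of the proof is bounding $\dist(\Borel{\M}, \Borel{\N})$. Without loss of generality (since the $L^1$-distance is invariant under pre-composition of both structures with a common measure-preserving transformation of $[0,1]$) I may assume the elements of $M$ are ordered so that the elements of each part $p_i \in P$ occupy consecutive integers, and so that the numbering of $P$ interleaves ``large'' parts (of size $\lceil |M|/|P|\rceil$) with ``small'' parts (of size $\lfloor |M|/|P|\rfloor$) in a round-robin fashion, keeping the running count of large parts within $1$ of its linear interpolation. Under this convention, $I_i \defas \bigcup_{m \in p_i}\iota_{|M|}(m)$ is a single interval whose endpoints differ from those of $\iota_{|P|}(i)$ by at most $1/|M|$, and $\Borel{\M}$ is indivisible with respect to the coarser partition $\{I_i\}$. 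Since both Borel structures are obtained from the same quotient $\N$, at any point $\boldx$ with $x_j \in I_{i_j} \cap \iota_{|P|}(i_j)$ for every $j$, both $\widetilde{E}^{\Borel{\M}}(\boldx)$ and $\widetilde{E}^{\M^+}(\boldx)$ equal $\widetilde{E}^{\N}(i_0, \dots, i_{\arity(E)-1})$, so the disagreement region for $E$ is contained in $\{\boldx \st \exists j,\, x_j \in D\}$, where $D \defas \bigcup_i(I_i \setminus \iota_{|P|}(i))$ is the one-dimensional mismatch set.

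A telescoping estimate shows $\lambda(D) \leq (|P|-1)/|M|$: the $|P|-1$ interior boundaries of $\{I_i\}$ each deviate from the corresponding boundary of $\{\iota_{|P|}(i)\}$ by at most $1/|M|$. Integrating this one-dimensional bound and summing over $E \in \Lang$ then yields the claimed inequality. The main subtlety I foresee is the coordinate-wise union bound in the preceding paragraph, which naively introduces a factor of $\arity(E)$ per relation; ensuring that it is absorbed into the stated constant (or handled by a sharper argument exploiting that the mismatch set consists only of thin boundary strips) is the most delicate point of the proof.
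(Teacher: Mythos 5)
Your construction differs from the paper's, but both are reasonable. The paper picks $r := \min_p |p|$ elements from each part $p \in P$ to form a set $A$ with $|M \setminus A| \le |P| - 1$, assembles approximate parts $p^*$ from the corresponding intervals $\iota_{|M|}(a)$, and distributes the leftover measure $[0,1] \setminus \bigcup_p p^*$ equally among the parts to make $P^+$ exactly equitable. You instead permute $M$ (harmless, as you note, up to a measure-preserving transformation of $[0,1]$) so that each part occupies a consecutive run with endpoints within $1/|M|$ of the ideal values $i/|P|$, and take $\M^+ := \Borel{\N}$ for the quotient $\N := \M/P$ with $P^+$ the standard interval partition. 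Both routes produce a one-dimensional mismatch set of measure at most $(|P|-1)/|M|$, and both implicitly require that $P$ be equitable (a hypothesis the lemma statement omits but the paper's proof also uses).

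The arity concern you flagged is a real gap and cannot be closed within your approach --- but it is not special to your approach; the paper's own proof has exactly the same defect. Passing from a one-dimensional mismatch set $D$ with $\lambda(D) \le (|P|-1)/|M|$ to the $\ell$-dimensional disagreement region (tuples with some coordinate in $D$) gives measure $1-(1-\lambda(D))^\ell \le \ell\,\lambda(D)$, and the factor $\ell = \arity(E)$ is generically unavoidable. The paper's displayed inequality $\lambda^\ell\bigl([0,1]^\ell\setminus \iota_{|M|}(A)^\ell\bigr)\le\lambda\bigl([0,1]\setminus\iota_{|M|}(A)\bigr)$ is simply false for $\ell \ge 2$: writing $x := \lambda(\iota_{|M|}(A))\le 1$, the left side is $1-x^\ell$ and the right is $1-x$, but $x^\ell \le x$, so the inequality points the wrong way; Bernoulli gives $1-x^\ell \le \ell(1-x)$ instead. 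What both arguments actually deliver is $\dist(\Borel{\M},\M^+) \le \sum_{E}\arity(E)\cdot\frac{|P|-1}{|M|}\le|\Lang|\,q_\Lang\cdot\frac{|P|-1}{|M|}$, and this corrected bound is all that the downstream Borel almost-stable regularity theorem needs, since the finite blow-up of $\H^*$ can be chosen a factor of $q_\Lang$ larger. So the gap you identified is a flaw in the lemma's stated constant, not in your method.
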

\begin{proof}
Let $r \defas \min\{|p|\st p \in P\}$. Let $A$ contain exactly $r$ elements from each $p \in P$. Note that $|M \setminus A| \leq |P| - 1$ as $P$ is equitable. For each $p \in P$ let $p^* \defas \bigcup_{a \in p \cap A} \iota_{|M|}(a)$. 

	Let $S$ be a partition of $[0,1]-\bigcup_{p\in P} p^*$ into $|P|$-many parts $\<s_p\>_{p \in P}$ of equal Lebesgue measure. For each $p\in P$, let $p^+ \defas p^* \cup s_p$. Define $P^+ \defas \{p^+ \st p \in P\}$. It is then immediate that $P^+$ is an equitable partition. 

	For the remainder of this proof, consider $E \in \Lang$, and 
	let $\ell \defas \arity(E)$;
	the result will follow by summing over all relation symbols in  $\Lang$. 
For every $p \in P$ 
	choose $x_p \in p$.
	For every $p^+_0, \ldots, p^+_{\ell-1} \in P^+$, and for every
	$y_0, \ldots, y_{\ell- 1} \in [0,1]$ such that 
	$y_i \in p^+_i$
	for all $i < \ell$, 
	let 
	\[
		\M^+ \models E(y_0, \ldots, y_{\ell-1} )
		\quad \text{if and only if}\quad
		\M \models E(x_{p_0}, \dots, x_{p_{\ell-1}}).
		\]
	Note that $P^+$ is indivisible with respect to $\M^+$. 

	Because $P$ was indivisible with respect to $\M$, the definition of $\M^+$ does not depend on the choice of the elements $x_p$. In particular this means $\widetilde{E}^{\M^+}|_{\iota_{|M|}(A)^{\ell}} = E^{\Borel{\M}}|_{\iota_{|M|}(A)^{\ell}}$. Finally, we have
\[
\lambda^{\ell}([0,1]^{\ell}\setminus \iota_{|M|}(A)^{\ell}) 
	\, \leq \,
	\lambda([0,1] \setminus \iota_{|M|}(A)) 
	\, \leq \,
	\frac{|P|-1}{|M|},
\]
which completes the argument for this particular $E\in \Lang$.
\end{proof}

\begin{theorem}[Almost stable regularity for Borel structures]
Suppose $\varepsilon > 0$. There is a $\delta > 0$ such that whenever 
	\begin{itemize}
\item[(a)] $\M$ is a Borel $\Lang$-structure that does not have the $(\delta, \brap)$-branching property and

\item[(b)] $g = \lceil 5 \cdot n_\Lang \cdot \brap \cdot \log \brap \rceil$,
\end{itemize}
there is a Borel $\M^+$ and an equitable partition $P$ of $\M^+$ such that 
\begin{itemize}
\item[(i)] $\dist(\M, \M^+) \leq \varepsilon$,
    
\item[(ii)] $P$ is indivisible with respect to $\M^+$, and
    
\item[(iii)] $|P| \leq 
	{(\frac{\varepsilon}{6 q_\Lang |\Lang|})}^{-(g+1)(g+2)}$.
\end{itemize}
\end{theorem}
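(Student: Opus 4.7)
The plan is to discretize $\M$ to a finite $\Lang$-structure, apply \cref{Almost Stable Regularity for Relational Structures} to that finite structure, and then transfer the resulting equitable indivisible partition back to the Borel setting via \cref{Find equitable indivisible partitions on [0 1] from finite ones}. The $L^1$-error $\dist(\M,\M^+)$ will then be bounded, via the triangle inequality, by the sum of three contributions: the initial discretization error, the edit-distance contribution from the finite regularity step, and the ``rounding'' error from converting an equitable finite partition into an equitable Borel partition. I would write $\varepsilon = \varepsilon_0 + q_\Lang|\Lang|\varepsilon_1 + \varepsilon_2$ with, say, $\varepsilon_0 = \varepsilon_2 = \varepsilon/3$ and $\varepsilon_1 = \varepsilon/(3 q_\Lang|\Lang|)$, and let $k'$ and $\delta'$ be the thresholds produced by \cref{Almost Stable Regularity for Relational Structures} applied at tolerance $\varepsilon_1$. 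By \cref{Upper bound on size of structures which are minimal for branching}, every $\Lang$-structure $\F$ that minimally has the $\brap$-branching property satisfies $|\F|\le 2^\brap q_\Lang$, so I would choose $\delta > 0$ small enough that $\delta + |\Lang|(2^\brap q_\Lang)^{q_\Lang}\varepsilon_0 < \delta'$.

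Next, I would invoke \cref{Approximate Borel structures by finite structures structures in L1} to produce a finite $\Lang$-structure $\N$ with $\dist(\M,\Borel{\N}) < \varepsilon_0$ and $|\N|$ chosen large enough that both $|\N|\geq k'$ and $|\N| \geq |\Lang|(\varepsilon_1/2)^{-g-2}/\varepsilon_2$ (so that the rounding error below will be at most $\varepsilon_2$). Combining \cref{Change in L1 doesn't change subgraph density} with \cref{Subgraph densities as finite structures are bounded by the same as Borel Structures}, for every minimal witness $\F$ we obtain $\tind(\F,\N) \leq \tind(\F,\Borel{\N}) \leq \tind(\F,\M) + |\Lang|(2^\brap q_\Lang)^{q_\Lang}\varepsilon_0 < \delta'$, so $\N$ does not have the $(\delta',\brap)$-branching property. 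Applying \cref{Almost Stable Regularity for Relational Structures} to $\N$ at tolerance $\varepsilon_1$ then yields a finite structure $\N^*$ on the same underlying set and an equitable indivisible partition $P$ of $\N^*$ with $|P|\leq (\varepsilon_1/2)^{-g-2}$; by \cref{Difference in edges related to L1}, $\dist(\Borel{\N},\Borel{\N^*}) \leq q_\Lang|\Lang|\varepsilon_1$. Finally, \cref{Find equitable indivisible partitions on [0 1] from finite ones} applied to $\N^*$ produces a Borel $\Lang$-structure $\M^+$ and an equitable Borel partition $P^+$ of $[0,1]$ of size $|P|$ indivisible with respect to $\M^+$, with $\dist(\Borel{\N^*},\M^+) \leq |\Lang|(|P|-1)/|\N| \leq \varepsilon_2$. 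The triangle inequality gives $\dist(\M,\M^+) \leq \varepsilon$, and the size bound $|P^+|=|P|\leq (\varepsilon/(6q_\Lang|\Lang|))^{-g-2}$ is dominated by the claimed $(\varepsilon/(6q_\Lang|\Lang|))^{-(g+1)(g+2)}$ (since $(g+1)(g+2)\geq g+2$ and, without loss of generality, $\varepsilon/(6q_\Lang|\Lang|) < 1$).

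The hard part is purely bookkeeping: the three auxiliary tolerances must be balanced so that the three triangle-inequality contributions sum to at most $\varepsilon$, while $|\N|$ must simultaneously be takeable large enough both for the finite regularity theorem to apply and for the rounding step to contribute at most $\varepsilon_2$. The freedom in the choice of $n$ in \cref{Approximate Borel structures by finite structures structures in L1} --- or equivalently the invariance of the Borel blow-up under finite blow-ups recorded in \cref{Blow-ups give the same Borel structures} --- is what makes this compatible with the fixed total target $\varepsilon$.
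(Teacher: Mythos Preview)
Your approach is essentially the paper's own: approximate $\M$ by the Borel blow-up of a finite structure, apply the finite almost stable regularity theorem, and pass back via \cref{Find equitable indivisible partitions on [0 1] from finite ones}, bounding $\dist(\M,\M^+)$ by three triangle-inequality contributions.

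There is one bookkeeping slip. With $\varepsilon_0 = \varepsilon/3$ fixed \emph{before} you know $\delta'$, the inequality $\delta + |\Lang|(2^{\brap} q_\Lang)^{q_\Lang}\varepsilon_0 < \delta'$ may be unsatisfiable for any $\delta>0$: the threshold $\delta'$ comes from the removal lemma inside \cref{Almost Stable Regularity for Relational Structures} and can be enormously smaller than $\varepsilon$. You must therefore choose $\varepsilon_0$ only after $\delta'$ is determined, e.g.\ $\varepsilon_0 < \min\bigl(\varepsilon/3,\ \delta'/(2|\Lang|(2^{\brap} q_\Lang)^{q_\Lang})\bigr)$, absorbing the slack into $\varepsilon_2$. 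The paper handles this differently: it sets $\delta$ equal to the removal-lemma threshold, and then \emph{given} a particular $\M$ failing the $(\delta,\brap)$-branching property, observes (using the finiteness, up to isomorphism, of minimal $\brap$-branching witnesses) that $\M$ in fact fails the $(\delta_0,\brap)$-branching property for some $\delta_0<\delta$, and only then chooses $\varepsilon_0$ small relative to $\delta-\delta_0$. Either adjustment rescues your argument; everything else --- including your use of the freedom in $n$ from \cref{Approximate Borel structures by finite structures structures in L1} to avoid a separate blow-up step, and your observation that the size bound $(\varepsilon/(6q_\Lang|\Lang|))^{-g-2}$ you obtain is already stronger than the stated $(\varepsilon/(6q_\Lang|\Lang|))^{-(g+1)(g+2)}$ --- is correct.
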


\begin{proof}
	Let $\varepsilon_1 >0$, and
	let $\delta$ be as determined by \cref{Almost Stable Regularity for Relational Structures} (with $\varepsilon_1$ as its $\varepsilon$).

	Suppose $\M$ satisfies condition (a). Then there must be some $\delta_0 < \delta$ such that $\M$ also satisfies condition (a) with respect to $\delta_0$. Let $\varepsilon_0$ be such that $\delta_0 + |\Lang| \cdot (2^\brap \cdot q_\Lang)^{q_\Lang} \cdot \varepsilon_0 < \delta$ and $\varepsilon_0 < \varepsilon/3$. By \cref{Approximate Borel structures by finite structures structures in L1} (with $\varepsilon_0$ as its $\varepsilon$) we can find a finite $\H$ such that $\dist(\Borel{\H}, \M) < \varepsilon_0$.

	Suppose, towards a contradiction, that $\H$ has the $(\delta,\brap)$-branching property. Then there is some finite $\F$ that minimally has the $\brap$-branching property such that $\tind(\F, \H) \ge \delta$.
	By \cref{Subgraph densities as finite structures are bounded by the same as Borel Structures}, we then have $\tind(\F, \Borel{\H}) \ge \delta$.
	We also have
	$|\F|  \le 2^{\brap} q_\Lang$ 
	by \cref{Upper bound on size of structures which are minimal for branching}.
Then by \cref{Change in L1 doesn't change subgraph density}, we know that 
\[
	\bigl|\tind(\F, \Borel{\H}) - \tind(\F, \M)\bigr| \leq |\Lang| |\F|^{q_\Lang} \cdot \varepsilon_0 \le |\Lang| (2^{\brap} q_\Lang)^{q_\Lang} \cdot \varepsilon_0
\]
which implies that 
	\[
	\tind(\F, \M) \ge
	\delta - 
	|\Lang| (2^{\brap} q_\Lang)^{q_\Lang} \cdot \varepsilon_0
	> \delta_0.
		\]
		Hence $\M$ has the $(\delta_0,\brap)$-branching property,
		contradicting our choice of $\delta_0$.
Therefore $\H$ must not have the $(\delta,\brap)$-branching property.

	By \cref{Blow-ups give the same Borel structures},
we may replace $\H$ by a finite blow-up
	so that $|H|$ is large enough to apply \cref{Almost Stable Regularity for Relational Structures} (with $\frac{\varepsilon_1}{2}$ as its $\varepsilon$).
	We thereby obtain an $\Lang$-structure $\H^*$ with the same underlying set as $\H$ and an equitable partition $P_H$ such that 
\begin{itemize}
    \item $P_H$ is indivisible with respect to $\H^*$,
    
	\item $|P_H| \leq (\frac{\varepsilon_1}{2})^{-(g+1)(g+2)}$, and
    
    \item 
		$|E^\H \SymDiff E^{\H^*}| \leq \arity(E) 
		\cdot |H|^{\arity(E)} \cdot (\frac{\varepsilon_1}{2})$
		for all $E \in \Lang$.
\end{itemize}

But then by \cref{Difference in edges related to L1} we have 
\[
\dist(\Borel{\H}, \Borel{\H^*}) \leq \sum_{E \in \Lang} \arity(E) \cdot\varepsilon_1 \leq |\Lang| \cdot q_\Lang \cdot \varepsilon_1
\]
and so $\dist(\Borel{\H^*}, \M) \leq |\Lang| \cdot q_\Lang \cdot \varepsilon_1 + \varepsilon_0$.

We may similarly replace $\H^*$ by a finite blow-up so as to
	apply
\cref{Find equitable indivisible partitions on [0 1] from finite ones} to find a Borel $\Lang$-structure $M^+$ and an equitable partition $P$ such that 
\begin{itemize}
\item $|P| = |P_H|$,

\item $P$ is indivisible for $E^+$, and

\item $\dist(\M^+, \Borel{\H^*}) \leq |\Lang| \cdot \frac{|P|-1}{k} \leq \varepsilon_1$.  
\end{itemize}

	So we have $\dist(\M^+, \M) \leq \varepsilon_1 + |\Lang| \cdot q_\Lang \cdot
	\varepsilon_1
	+ \varepsilon_0$. 
Hence for $\varepsilon_1 \defas \frac{\varepsilon}{3 q_\Lang |\Lang|}$,
	we have $\dist(\M^+, \M) \leq \varepsilon$. Further, as $|P| = |P_H|$ we have $|P| \leq  
	{(\frac{\varepsilon_1}{2})}^{-(g+1)(g+2)}
	= {(\frac{\varepsilon}{6 q_\Lang |\Lang|})}^{-(g+1)(g+2)}$.
\end{proof}

\section*{Acknowledgements}

The authors would like to thank M.~Malliaris for helpful conversations.

The main results of this paper were presented at the 
North American Annual Meeting of the Association for Symbolic Logic \cite{ASLMeeting} at the University of Connecticut on May 26, 2016, and at the Workshop on Model Theory of Finite and Pseudofinite Structures\footnote{\url{http://www.maths.leeds.ac.uk/fps/programme.html}}
at the University of Leeds on July 29, 2016.

This research was facilitated by the 
the Trimester
Program on Universality and Homogeneity of the Hausdorff
Research Institute for Mathematics at the University of Bonn
(2013), the Program on Model Theory, Arithmetic Geometry and Number Theory
of the Mathematical Sciences Research Institute (2014),
and the Lorentz Center workshop on Logic and Random Graphs (2015).

Work on
this publication by C.\,F.\ was made possible through the support of
ARO grant W911NF-13-1-0212
and a grant from Google.


\newcommand{\etalchar}[1]{$^{#1}$}
\def\cprime{$'$} \def\polhk#1{\setbox0=\hbox{#1}{\ooalign{\hidewidth
  \lower1.5ex\hbox{`}\hidewidth\crcr\unhbox0}}}
  \def\polhk#1{\setbox0=\hbox{#1}{\ooalign{\hidewidth
  \lower1.5ex\hbox{`}\hidewidth\crcr\unhbox0}}} \def\cprime{$'$}
  \def\cprime{$'$} \def\cprime{$'$} \def\cprime{$'$} \def\cprime{$'$}
  \def\cprime{$'$} \def\cprime{$'$} \def\cprime{$'$} \def\cprime{$'$}
\providecommand{\bysame}{\leavevmode\hbox to3em{\hrulefill}\thinspace}
\providecommand{\MR}{\relax\ifhmode\unskip\space\fi MR }
\providecommand{\MRhref}[2]{%
  \href{http://www.ams.org/mathscinet-getitem?mr=#1}{#2}
}
\providecommand{\href}[2]{#2}

\end{document}